\newtheorem{thm}{Theorem}[section]
\newtheorem{definition}[thm]{Definition}
\newtheorem{lemma}[thm]{Lemma}
\newtheorem{corollary}[thm]{Corollary}
\newtheorem{proposition}[thm]{Proposition}
\newtheorem{example}[thm]{Example}
\newtheorem{introthm}{Theorem}
\newtheorem{introprop}[introthm]{Proposition}
\newcommand{\Ha}{\mathrm{H}}
\newcommand{\C}{\mathbb{C}}
\newcommand{\N}{\mathbb{N}}
\newcommand{\Z}{\mathbb{Z}}
\title[C*-Algebras Generated by Two Partitions of Unity]{Classification of Certain C*-Algebras Generated by Two Partitions of Unity}
\author{Bj\"orn Sch\"afer}
\email{bschaefer@math.uni-sb.de}
\address{Department of Mathematics, Saarland University, D-66123 Saarbr\"ucken, Germany}
\date{\today}
\begin{document}

\begin{abstract}
    We study C*-algebras generated by two partitions of unity subject to orthogonality relations governed by a bipartite graph which we also call ``bipartite graph C*-algebras''.
    These algebras generalize at the same time the C*-algebra $C^\ast(p,q)$ generated by two projections and the hypergraph C*-algebras of Trieb, Weber and Zenner.
    We describe alternative universal generators of bipartite graph C*-algebras and study partitions of unity in ``generic position'' associated to a bipartite graph.
    As a main result, we prove that bipartite graph C*-algebras are completely classified by their one- and two-dimensional irreducible representations which provides a first step towards a classification of the more general hypergraph C*-algebras.
\end{abstract}

\maketitle

\setcounter{tocdepth}{1}    
\bookmarksetup{depth=2}     
\tableofcontents

\section{Introduction}

Let $P$ and $Q$ be projections on some Hilbert space $\mathcal{H}$. Famously, Halmos described the relation between $P$ and $Q$ in his Two Projection Theorem \cite{halmos_two_1969}:
If the two projections are in a particular ``generic position'',
then $P$ and $Q$ are unitarily equivalent to
\begin{align*}
    \begin{pmatrix}
        I & 0 \\
        0 & 0
    \end{pmatrix}
    \quad \text{ and } \quad
    \begin{pmatrix}
        C^2 & CS \\
        CS & S^2
    \end{pmatrix},
\end{align*}
where $C$ and $S$ are positive contractions on some Hilbert space $\mathcal{K}$ with $S^2 + C^2 = I$, $I \in B(\mathcal{H})$ being the identity operator. This theorem has various precursors of which we only mention \cite{krein_defect_1948, dixmier_position_1948, davis_separation_1958}. A more thorough account of the history is provided by B\"ottcher and Spitkovsky in \cite{bottcher_gentle_2010}.

From a more abstract point of view, Pedersen considered the universal $C^\ast$-algebra $C^\ast(p,q)$ that is generated by two arbitrary projections $p$ and $q$. This algebra can be described explicitly as
\begin{align*}
    C^*(p,q) \cong \{ f \in C([0,1], M_2) \mid f(0) \text{ and } f(1) \text{ are diagonal matrices} \},
\end{align*}
where $M_2$ is the algebra of complex $2 \times 2$ matrices \cite{pedersen_measure_1968}.
Different proofs of this fact can be found e.g. in \cite{power_hankel_1982} (based on Halmos' theorem), \cite{roch_algebras_1988} (as a special case of the more general situation of a Banach algebra generated by idempotents), or \cite{raeburn_c-algebra_1989} (using the Mackey machine). Another proof can be found in \cite{vasilevski_algebra_1981}, see also \cite{bottcher_gentle_2010}.

In this paper, we investigate a class of $C^\ast$-algebras associated to bipartite graphs which generalizes $C^\ast(p,q)$. More precisely, given a bipartite graph $G = (U, V, E)$ with disjoint vertex sets $U$ and $V$ and edge set $E \subset \{\{u,v\} \mid u \in U, v \in V\}$, we consider the universal C*-algebra $C^\ast(G)$ that is generated by projections $p_x$ with $x \in U \cup V$ satisfying the two relations
\begin{align}
    \sum_{u \in U} p_u &= 1 = \sum_{v \in V} p_v,        \tag{\ref{bga::eq:partition_relation}} \\
    p_u p_v &= 0 \; \text{ if } \{u, v\} \not \in E.  \tag{\ref{bga::eq:orthogonality_relation}}
\end{align}
Thus, the families $\{p_u\}_{u \in U}$ and $\{p_v\}_{v \in V}$ consist of pairwise orthogonal projections which add up to the unit of $C^\ast(G)$. Such a family is called a partition of unity and therefore $C^\ast(G)$ is generated by two partitions of unity. We call $C^\ast(G)$ the \emph{bipartite graph C*-algebra} associated to $G$. As a particular case, the universal C*-algebra $C^\ast(p,q)$ of two projections is retained as $C^\ast(K_{2,2})$, where $K_{2,2}$ is the complete bipartite graph with two vertices on each side.

Our interest in these algebras stems from their connection to the recently introduced hypergraph $C^\ast$-algebras which have first been investigated by Trieb, Weber and Zenner in \cite{trieb_hypergraph_2024}. Hypergraph C*-algebras are a novel generalization of graph C*-algebras and not much is known about them. Open questions include their classification or which hypergraph C*-algebras are nuclear. The latter question was studied by Moritz Weber and the author in \cite{schafer_nuclearity_2024}. There, a partial result was obtained: In order to tell which hypergraph C*-algebras are nuclear it suffices to know which \emph{undirected} hypergraph $C^\ast$-algebras are nuclear. In fact, the latter are exactly the algebras we study in the present paper (see Proposition \ref{bga::prop:connection_to_hypergraph_algebras}), though we prefer the language of bipartite graphs for the sake of clarity.

As a first result of this work, we obtain alternative generators of the bipartite graph C*-algebra $C^\ast(G)$ where the universal generators are not projections associated to the vertices but contractions associated to the edges of $G$. This offers a different perspective on these C*-algebras and might be useful for future investigations.

\begin{introprop}[Proposition \ref{bga::proposition:alternative_definition}]
  Let $G = (U, V, E)$ be a bipartite graph. Then $C^\ast(G)$ is the universal $C^\ast$-algebra generated by a family of elements $(x_e)_{e \in E}$ satisfying
  \begin{align}
      x_e^\ast x_f &= 0 & & \text{ if } e \cap f \cap U = \emptyset,  \tag{\ref{bga::eq:gc1}} \\
      x_e x_f^\ast &= 0 & & \text{ if } e \cap f \cap V = \emptyset,  \tag{\ref{bga::eq:gc2}} \\
      \left( \sum_{e \in E} x_e^\ast \right) x_f &= x_f,              \tag{\ref{bga::eq:gc3}} \\
      x_e \left( \sum_{f \in E} x_f^\ast \right) &= x_e,              \tag{\ref{bga::eq:gc4}}
  \end{align}
  for all edges $e, f \in E$, respectively. In particular, the $x_e$ are contractions which satisfy $x_e x_e^\ast x_e = x_e^2$.
\end{introprop}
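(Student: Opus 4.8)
The plan is to exhibit a mutually inverse pair of $*$-homomorphisms between $C^\ast(G)$ and the universal C*-algebra $A$ presented by the generators $(x_e)_{e\in E}$ and relations \eqref{bga::eq:gc1}--\eqref{bga::eq:gc4}. The natural candidate for the edge elements inside $C^\ast(G)$ is $x_e := p_u p_v$ for $e = \{u,v\}$ with $u \in U$, $v \in V$. First I would check that these satisfy the four relations: relation \eqref{bga::eq:gc1} reads $x_e^\ast x_f = p_v p_u p_{u'} p_{v'}$ for $e=\{u,v\}$, $f=\{u',v'\}$, which vanishes when $u \neq u'$ because $p_u p_{u'} = 0$; relation \eqref{bga::eq:gc2} is the symmetric computation with the $V$-projections; and for \eqref{bga::eq:gc3} one computes $\sum_{e \in E} x_e^\ast = \sum_{\{u,v\} \in E} p_v p_u = \sum_{v \in V} p_v \bigl(\sum_{u : \{u,v\}\in E} p_u\bigr) = \sum_{v \in V} p_v \bigl(\sum_{u \in U} p_u\bigr) = \sum_v p_v = 1$, where the crucial step is that the orthogonality relation \eqref{bga::eq:orthogonality_relation} lets us replace the restricted sum over neighbours of $v$ by the full sum over $U$. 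Hence $\bigl(\sum_e x_e^\ast\bigr)x_f = x_f$ and likewise \eqref{bga::eq:gc4}, so by universality of $A$ there is a $*$-homomorphism $\Phi \colon A \to C^\ast(G)$ with $\Phi(x_e) = p_u p_v$.

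For the reverse direction I need to recover the vertex projections from the edge elements inside $A$. Guided by the identities just observed, set $q_u := \sum_{v : \{u,v\} \in E} x_{\{u,v\}} x_{\{u,v\}}^\ast$ and $q_v := \sum_{u : \{u,v\} \in E} x_{\{u,v\}}^\ast x_{\{u,v\}}$ in $A$; these are manifestly self-adjoint, and the hard part is showing they are projections with the right properties. To see $q_u^2 = q_u$, expand the product and use \eqref{bga::eq:gc2} to kill all cross terms $x_{\{u,v\}} x_{\{u,v\}}^\ast x_{\{u,v'\}} x_{\{u,v'\}}^\ast$ with $v \neq v'$ (since $\{u,v\} \cap \{u,v'\} \cap V = \emptyset$), reducing to the diagonal terms $x_{\{u,v\}} x_{\{u,v\}}^\ast x_{\{u,v\}} x_{\{u,v\}}^\ast$; idempotency then follows once one knows $x_e$ is a partial isometry in $A$, or rather that $x_e x_e^\ast$ is a projection. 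This last point, together with the remaining relations $\sum_u q_u = 1 = \sum_v q_v$ and $q_u q_v = 0$ for $\{u,v\} \notin E$, is where the combinatorics of \eqref{bga::eq:gc1}--\eqref{bga::eq:gc4} must be exploited carefully; the relation $\sum_u q_u = 1$ should come from $\sum_u q_u = \sum_e x_e x_e^\ast = \bigl(\sum_e x_e\bigr)\bigl(\sum_f x_f^\ast\bigr)$ after using \eqref{bga::eq:gc4} in the form $x_e\bigl(\sum_f x_f^\ast\bigr) = x_e$ to telescope, though I expect one must combine \eqref{bga::eq:gc3} and \eqref{bga::eq:gc4} to pin down that $\sum_e x_e$ is in fact a unitary-like element whose relevant products collapse. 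Granting these, universality of $C^\ast(G)$ yields $\Psi \colon C^\ast(G) \to A$ with $\Psi(p_x) = q_x$.

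Finally I would verify $\Phi \circ \Psi = \mathrm{id}$ and $\Psi \circ \Phi = \mathrm{id}$ on generators: $\Phi(\Psi(p_u)) = \sum_{v} p_u p_v p_v p_u = p_u \bigl(\sum_v p_v\bigr) p_u = p_u$ using again \eqref{bga::eq:orthogonality_relation} to extend the neighbour-sum to all of $V$, and symmetrically for $p_v$; conversely $\Psi(\Phi(x_e)) = q_u q_v$, and one checks $q_u q_v = x_{\{u,v\}}$ directly — expanding $q_u q_v = \bigl(\sum_{v'} x_{\{u,v'\}}x_{\{u,v'\}}^\ast\bigr)\bigl(\sum_{u'} x_{\{u',v\}}^\ast x_{\{u',v\}}\bigr)$, the relations \eqref{bga::eq:gc1} and \eqref{bga::eq:gc2} eliminate every term except the one indexed by $v'=v$, $u'=u$, which is $x_{\{u,v\}} x_{\{u,v\}}^\ast x_{\{u,v\}}^\ast x_{\{u,v\}}$, and this reduces to $x_{\{u,v\}}$ by the partial-isometry property. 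The supplementary claims are then immediate: contractivity of $x_e$ follows since $x_e^\ast x_e = x_e^\ast x_e x_e^\ast x_e$ makes $x_e^\ast x_e$ a projection (a self-adjoint idempotent has norm $\le 1$), and $x_e x_e^\ast x_e = x_e^2$ is obtained by writing $x_e = p_u p_v$ under $\Phi$, where $p_u p_v p_v p_u p_u p_v = p_u p_v p_u p_v = (p_u p_v)^2$. I expect the main obstacle to be the internal verification that the elements $q_u$, $q_v$ in $A$ are genuine projections and satisfy the two partition-of-unity relations purely from \eqref{bga::eq:gc1}--\eqref{bga::eq:gc4}, i.e. establishing the partial-isometry structure of the $x_e$ abstractly rather than via the representation in $C^\ast(G)$; everything else is bookkeeping with orthogonality.
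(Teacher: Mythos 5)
Your first homomorphism ($x_e \mapsto p_u p_v$, checked against \eqref{bga::eq:gc1}--\eqref{bga::eq:gc4}) matches the paper and is fine, but the reverse direction contains a genuine gap, and it is exactly the point you flag as the ``main obstacle'': the elements $x_e$ are \emph{not} partial isometries, and $x_e x_e^\ast$ is \emph{not} a projection. The relations only yield $x_e x_e^\ast x_e = x_e^2$, i.e. $x_e$ is a product of two projections; in the canonical model $x_e = p_u p_v$ one has $x_e x_e^\ast = p_u p_v p_u$, which in $C^\ast(K_{2,2}) \cong \{f \in C([0,1],M_2) : f(0), f(1) \text{ diagonal}\}$ is the function $t \mapsto \mathrm{diag}(t,0)$ --- not idempotent. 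Consequently your verification of $q_u^2 = q_u$ breaks twice: the cross terms $x_{\{u,v\}} x_{\{u,v\}}^\ast x_{\{u,v'\}} x_{\{u,v'\}}^\ast$ do \emph{not} vanish, because the only adjacent pairing the relations can see is $x_{\{u,v\}}^\ast x_{\{u,v'\}}$, which is governed by \eqref{bga::eq:gc1} and survives since both edges contain $u$ (relation \eqref{bga::eq:gc2} only constrains patterns $x_e x_f^\ast$, not a product sitting inside $x_e x_e^\ast \cdot x_f x_f^\ast$); and the diagonal terms $x_e x_e^\ast x_e x_e^\ast$ do not reduce to $x_e x_e^\ast$. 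The same problems invalidate the computation of $q_u q_v$ (the middle pairing there is $x_{\{u,v'\}}^\ast x_{\{u',v\}}^\ast$, a product of two starred elements, to which no relation applies, and even for a genuine partial isometry $x x^\ast x^\ast x \neq x$), and the justification of contractivity via ``$x_e^\ast x_e$ is a projection'' is false for the same reason; contractivity should instead come from $x_e x_e^\ast x_e = x_e^2$ (product of two projections), which is also what guarantees that the universal algebra $A$ exists at all --- a point your write-up does not address.

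The fix is to choose different candidates for the vertex projections, as the paper does: set $P_x := \sum_{e \in E:\, x \in e} x_e$ (a plain sum of edge elements, not of $x_e x_e^\ast$), note that \eqref{bga::eq:gc3} and \eqref{bga::eq:gc4} make $\sum_e x_e^\ast$ a unit for $A$, and then self-adjointness and idempotency of $P_x$ follow by short computations combining \eqref{bga::eq:gc3}/\eqref{bga::eq:gc1} (for $x \in U$) resp. \eqref{bga::eq:gc4}/\eqref{bga::eq:gc2} (for $x \in V$), after which the partition and orthogonality relations \eqref{bga::eq:partition_relation}, \eqref{bga::eq:orthogonality_relation} are routine. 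Your elements $q_u$ do in fact coincide with these $P_u$ (one has $P_u P_u^\ast = q_u$ by \eqref{bga::eq:gc2}, and $P_u$ is a projection), so your formula names the correct element, but the idempotency, the partition-of-unity identities, and the inverse-on-generators check cannot be established along the route you describe; as written, the second half of the argument does not go through.
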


Next, we investigate how the classical notion of projections in ``generic position'' can be adapted to our situation. In \cite{vasilevski_algebra_1981} Vasilevski generalized Halmos' notion of two projections in generic position to the case of two partitions of unity of the form $\{P, 1-P\}$ and $\{Q_1, \dots, Q_n\}$ where $P$ and $Q_i$ are projections on some Hilbert space $\mathcal{H}$. His results carry over to a $G$-projection family for a bipartite graph $G$ as follows.

\begin{introthm}[Theorem \ref{genpos::thm:projections_in_generic_position}]
  Every $G$-projection family $(P_x)_{x \in U \cup V}$ in generic position on some Hilbert space $\mathcal{H}$ associated to a connected bipartite graph $G$ is (up to unitary equivalence) of the form
  \begin{align*}
    P_u &= (C_{u v_1}^\ast C_{u v_2})_{v_1, v_2 \in V} \in M_{V}(\mathcal{B}(\mathcal{K})), \\
    P_v &= (\delta_{v_1 v} \delta_{v_2 v})_{v_1, v_2 \in V} \in M_{V}(\mathcal{B}(\mathcal{K})),
  \end{align*}
  for operators $(C_{uv})_{u \in U, v \in V}$ on a Hilbert space $\mathcal{K}$ that satisfy particular relations including $C_{uv} = 0$ if $\{u,v \} \not \in E$.
\end{introthm}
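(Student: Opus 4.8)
The plan is to follow the strategy of Halmos' Two Projection Theorem, in the form Vasilevski gave it: first use the partition of unity $\{P_v\}_{v \in V}$ to put the $P_v$ in diagonal form, then factor each $P_u$ through its range to extract candidate operators $C_{uv}$, and finally invoke generic position together with connectedness of $G$ to identify all Hilbert spaces that occur with a single $\mathcal{K}$.

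\textbf{Step 1 (diagonalizing the $V$-partition).} Put $\mathcal{H}_v := P_v\mathcal{H}$. Since $\{P_v\}_{v \in V}$ is a partition of unity, $\mathcal{H} = \bigoplus_{v \in V}\mathcal{H}_v$, and with respect to this decomposition $P_v$ is the matrix unit $(\delta_{v_1 v}\delta_{v_2 v})_{v_1, v_2}$ --- already the asserted form --- while a general operator becomes a $V \times V$ block matrix. Fix $u \in U$. Since $P_u P_v = 0$ for $\{u,v\} \notin E$ and $P_u = P_u^\ast$, the $v$-row and $v$-column of the block matrix of $P_u$ vanish when $v \not\sim u$, so $(P_u)_{v_1 v_2} = 0$ unless $v_1, v_2 \sim u$. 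Let $\mathcal{R}_u := P_u\mathcal{H}$ and $C_u \colon \mathcal{H} \to \mathcal{R}_u$, $\xi \mapsto P_u\xi$; then $P_u = C_u^\ast C_u$ and $C_u C_u^\ast = I_{\mathcal{R}_u}$. With $C_{uv} := C_u|_{\mathcal{H}_v} \in \mathcal{B}(\mathcal{H}_v, \mathcal{R}_u)$ one gets $(P_u)_{v_1 v_2} = C_{uv_1}^\ast C_{uv_2}$, $C_{uv} = 0$ for $\{u,v\} \notin E$, and the identities $\sum_v C_{uv} C_{uv}^\ast = I_{\mathcal{R}_u}$ and $\sum_u C_{uv_1}^\ast C_{uv_2} = \delta_{v_1 v_2} I_{\mathcal{H}_{v_1}}$ (the first from $C_u C_u^\ast = I$, the second from $\sum_u P_u = 1$ and pairwise orthogonality of the $P_u$); these become the ``particular relations'' of the statement. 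At this point the family is already in the asserted shape, except that the $C_{uv}$ still act between the various spaces $\mathcal{H}_v$, $\mathcal{R}_u$ rather than on one fixed $\mathcal{K}$.

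\textbf{Step 2 (nondegeneracy from generic position --- the main obstacle).} I claim that for every edge $\{u,v\} \in E$ the operator $C_{uv}$ is injective and has dense range. The kernel of $C_{uv}$ is $\mathcal{H}_v \cap \mathcal{R}_u^\perp$ and the orthogonal complement of its range inside $\mathcal{R}_u$ is $\mathcal{R}_u \cap \mathcal{H}_v^\perp$; on either of these subspaces $P_u$ is forced to act as $0$, and a careful analysis turns such a subspace into a nonzero reducing subspace of the whole family on which every $P_x$ is $0$ or $1$, which is excluded by generic position. Making this implication precise --- in particular checking that the subspace produced is reducing for all of the $P_x$, not merely for $P_u$ and $P_v$ --- is the technical heart of the argument; it is the analogue of the hypothesis in Halmos' theorem that the angle operator have no eigenvalue $0$ or $\tfrac{\pi}{2}$, and the correct definition of generic position for a $G$-projection family is precisely what is needed to make it go through. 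Here Vasilevski's treatment of the case $|U| = 2$ in \cite{vasilevski_algebra_1981} is the model to adapt.

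\textbf{Step 3 (gluing along a spanning tree).} Since $G$ is connected, choose a spanning tree $T$ rooted at some $v_0 \in V$ and set $\mathcal{K} := \mathcal{H}_{v_0}$. Visiting the vertices of $T$ in order of increasing distance from $v_0$, I build unitaries $W_v \colon \mathcal{H}_v \to \mathcal{K}$ ($v \in V$) and $V_u \colon \mathcal{R}_u \to \mathcal{K}$ ($u \in U$): set $W_{v_0} := I$; if a vertex $u$ is reached from its parent $v$, use Step 2 to write $C_{uv} W_v^\ast \colon \mathcal{K} \to \mathcal{R}_u$ in polar form and let $V_u$ be the inverse of its (unitary) phase; if a vertex $v$ is reached from its parent $u$, write $V_u C_{uv} \colon \mathcal{H}_v \to \mathcal{K}$ in polar form and let $W_v$ be its phase. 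Because $T$ spans $G$, this identifies every $\mathcal{H}_v$ and every $\mathcal{R}_u$ with $\mathcal{K}$. The unitary $W := \bigoplus_{v \in V} W_v$ on $\mathcal{H}$ is block diagonal for the $V$-decomposition, so it leaves each $P_v$ in the asserted form; conjugating $P_u$ and inserting $V_u^\ast V_u = I_{\mathcal{R}_u}$ into each block gives $W P_u W^\ast = \bigl( (V_u C_{uv_1} W_{v_1}^\ast)^\ast\, (V_u C_{uv_2} W_{v_2}^\ast) \bigr)_{v_1, v_2}$. Hence, relabeling $C_{uv} := V_u C_{uv} W_v^\ast \in \mathcal{B}(\mathcal{K})$, the family is in the claimed form: $C_{uv} = 0$ for $\{u,v\} \notin E$, and the two identities of Step 1 now hold in $\mathcal{B}(\mathcal{K})$ --- equivalently, each row $(C_{uv})_{v \in V}$ is a coisometry and the matrix $(C_{uv})_{u \in U, v \in V}$ is unitary --- while along the tree edges one additionally obtains $C_{uv} \ge 0$. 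Nothing further is asserted about the $C_{uv}$ on the remaining edges, which is why the statement speaks of ``particular relations'' rather than giving an explicit formula.
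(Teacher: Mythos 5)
Your proposal follows essentially the same route as the paper: decompose $\mathcal{H}=\bigoplus_{v\in V}L_v$, write each $P_u$ in block form so that its $(v_1,v_2)$-entry is the compression $P_{v_1}P_uP_{v_2}$, and use connectedness to identify all the spaces $L_x$ with a single $\mathcal{K}$, after which $C_{uv}$ is $P_uP_v$ transported to $\mathcal{K}$; your spanning-tree unitaries are just an explicit construction of the isometric isomorphisms that Lemma \ref{genpos::lemma::Lx_are_isomorphic} supplies (with the harmless bonus that $C_{uv}\geq 0$ along tree edges). The one point where you misjudge the difficulty is your Step 2: by Definition \ref{genpos::def::generic_position}, generic position \emph{means} $L_u\cap L_v^\perp=\{0\}=L_u^\perp\cap L_v$ for every edge $\{u,v\}\in E$, and the subspaces you correctly identify as the kernel and co-range of $C_{uv}$ are exactly $L_v\cap L_u^\perp$ and $L_u\cap L_v^\perp$; injectivity and dense range are therefore immediate (this is the two-line computation in the proof of Lemma \ref{genpos::lemma::Lx_are_isomorphic}), so the deferred ``careful analysis'' via a reducing subspace is unnecessary --- and as sketched it would be shaky, since $L_v\cap L_u^\perp$ need not be invariant under the other $P_x$. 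Two minor additions to make the verification complete: record the off-diagonal row relation $\sum_{v}C_{u_1v}C_{u_2v}^\ast=0$ for $u_1\neq u_2$ (it follows from $P_{u_1}P_{u_2}=0$ exactly as in your diagonal case, and is needed for relation \eqref{bga:generic_position:G-contraction_family_condition_2}), and note that the full Theorem \ref{genpos::thm:projections_in_generic_position} also contains a converse (such a family $(C_{uv})$ always produces a $G$-projection family in generic position), which your proposal does not address, though the statement as quoted only asks for the forward direction.
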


The notion of a $G$-projection family in generic position is made precise in Definition \ref{genpos::def::generic_position} and the conditions imposed on the operators $C_{uv}$ can be found in the statement of Theorem \ref{genpos::thm:projections_in_generic_position}. They generalize the condition $C^2 + S^2 = I$ from Halmos' Theorem and incorporate the graph structure by asking $C_{uv} = 0$ whenever $\{u,v\} \not \in E$.

Finally, the main result of this work is a classification of bipartite graph C*-algebras which answers the question: When is $C^\ast(G) \cong C^\ast(G^\prime)$ for two bipartite graphs $G$ and $G^\prime$. For that we consider the subspace $\mathrm{Spec}_{\leq 2}(C^\ast(G))$ of the spectrum of $C^\ast(G)$ which contains only the (equivalence classes of) one- and two-dimensional irreducible representations.  The structure of this space can be easily read off from the bipartite graph $G$. In fact, the one-dimensional irreducible representations correspond to edges of the bipartite graph, while the two-dimensional irreducible representations correspond to subgraphs that are isomorphic to $K_{2,2}$, see Lemma \ref{reps::lemma:representations_vs_edges_and_subgraphs} and Lemma \ref{reps::lemma:Spec<=2(C*G)}. Then we obtain the following theorem.

\begin{introthm}[Theorem \ref{class::thm:classification_of_bipartite_graph_algebras}]
    \label{intro::thm:classification_of_bipartite_graph_algebras}
    We have 
    \begin{align*}
        C^\ast(G) \cong C^\ast(G^\prime) 
            \quad \Leftrightarrow \quad 
        \mathrm{Spec}_{\leq 2}(C^\ast(G)) \cong \mathrm{Spec}_{\leq 2}(C^\ast(G^\prime)).
    \end{align*}
\end{introthm}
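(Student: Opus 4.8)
The implication ``$\Rightarrow$'' is routine: a $\ast$-isomorphism $\Phi\colon C^\ast(G)\to C^\ast(G')$ induces a homeomorphism $\widehat\Phi\colon\mathrm{Spec}(C^\ast(G'))\to\mathrm{Spec}(C^\ast(G))$, $[\pi]\mapsto[\pi\circ\Phi]$, and since $\Phi$ preserves the Hilbert space dimension of an irreducible representation, $\widehat\Phi$ restricts to a homeomorphism between the subspaces of one- and two-dimensional points. So the content lies entirely in ``$\Leftarrow$'', and the plan is to reduce both sides of the equivalence to one and the same combinatorial datum attached to $G$.

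I would first reduce to the case of a connected graph: for each connected component $G_c$ of $G$ the projection $z_c:=\sum_{u\in U_c}p_u$ is easily seen to equal $\sum_{v\in V_c}p_v$, to be central, and to satisfy $z_cC^\ast(G)\cong C^\ast(G_c)$, so that $C^\ast(G)\cong\bigoplus_c C^\ast(G_c)$ and $\mathrm{Spec}_{\leq 2}(C^\ast(G))$ decomposes accordingly. On the spectral side, Lemma \ref{reps::lemma:representations_vs_edges_and_subgraphs} and Lemma \ref{reps::lemma:Spec<=2(C*G)} describe $\mathrm{Spec}_{\leq 2}(C^\ast(G))$ entirely in terms of $G$: its one-dimensional points are the edges, its two-dimensional points are the $K_{2,2}$-subgraphs, arranged into continuous families parametrised by $(0,1)$ (the interval from $C^\ast(p,q)\cong\{f\in C([0,1],M_2)\mid f(0),f(1)\text{ diagonal}\}$), and the topology records how the closure of each such family picks up the four edges of the corresponding $K_{2,2}$, split into its two ``opposite'' pairs exactly as the ends of the interval degenerate in the two-projection case. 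From this one reads off, for any two edges, whether they are opposite or adjacent sides of a common $K_{2,2}$. I would package this into a combinatorial invariant $\mathcal{D}(G)$ and show that $\mathrm{Spec}_{\leq 2}(C^\ast(G))\cong\mathrm{Spec}_{\leq 2}(C^\ast(G'))$ holds precisely when $\mathcal{D}(G)\cong\mathcal{D}(G')$, and precisely when $G$ and $G'$ have the same \emph{reduced graph} $G^{\mathrm{red}}$ (up to exchanging the two vertex classes, which anyway leaves $C^\ast(G)$ unchanged since the defining relations are symmetric in $U$ and $V$). Here $G^{\mathrm{red}}$ is obtained from $G$ by the moves that are invisible both to $C^\ast(G)$ and to $\mathrm{Spec}_{\leq 2}$: deleting isolated vertices (forced to have $p_x=0$ by the partition relation) and collapsing the local configurations in which the orthogonality relations leave no choice but to identify two of the defining projections.

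It then remains to show that these moves preserve the C*-algebra, i.e.\ $C^\ast(G)\cong C^\ast(G^{\mathrm{red}})$; together with the previous step this gives $\mathrm{Spec}_{\leq 2}(C^\ast(G))\cong\mathrm{Spec}_{\leq 2}(C^\ast(G'))\Rightarrow G^{\mathrm{red}}\cong(G')^{\mathrm{red}}\Rightarrow C^\ast(G)\cong C^\ast(G^{\mathrm{red}})\cong C^\ast((G')^{\mathrm{red}})\cong C^\ast(G')$. Each reduction move would be realised by an explicit $\ast$-isomorphism coming from the universal property: deleting isolated vertices is immediate, and for the collapsing moves one checks that the forced identity (for instance $p_u=p_v$ when $G$ is a disjoint union of edges, where the two partition relations together with orthogonality imply it) makes the defining relations of $G$ and of $G^{\mathrm{red}}$ mutually equivalent. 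The alternative edge generators of Proposition \ref{bga::proposition:alternative_definition} and the normal form of Theorem \ref{genpos::thm:projections_in_generic_position} are convenient for keeping track of the generators through these identifications.

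The hard part is calibrating the reduction $G\mapsto G^{\mathrm{red}}$ so that it is at once fine enough for $\mathrm{Spec}_{\leq 2}$ to determine $G^{\mathrm{red}}$ and coarse enough for every move to genuinely preserve $C^\ast(G)$. What makes this delicate is that $C^\ast(G)$ in general has irreducible representations of dimension larger than $2$ --- already $C^\ast(K_{2,3})$, the universal C*-algebra generated by a projection and a three-term partition of unity, has a three-dimensional irreducible representation --- so a priori $\mathrm{Spec}_{\leq 2}$ forgets information; the theorem asserts that within the class of bipartite graph C*-algebras it does not, the higher-dimensional part of the spectrum being reconstructible from, and hence redundant given, $G^{\mathrm{red}}$. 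Establishing this --- equivalently, ruling out non-isomorphic reduced graphs $G^{\mathrm{red}}\not\cong(G')^{\mathrm{red}}$ with homeomorphic $\mathrm{Spec}_{\leq 2}$ --- is the crux, and is exactly what the careful analysis of closure relations and pairings behind Lemma \ref{reps::lemma:Spec<=2(C*G)} is meant to supply.
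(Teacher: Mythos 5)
Your forward implication and your reading of Lemmas \ref{reps::lemma:representations_vs_edges_and_subgraphs} and \ref{reps::lemma:Spec<=2(C*G)} are fine, but the core of the converse is missing. You propose to funnel everything through a ``reduced graph'' $G^{\mathrm{red}}$ with the two properties that $\mathrm{Spec}_{\leq 2}(C^\ast(G))$ determines $G^{\mathrm{red}}$ up to isomorphism and that $C^\ast(G)\cong C^\ast(G^{\mathrm{red}})$, yet you never define the reduction moves precisely, never prove either property, and you yourself flag the calibration of the reduction as ``the crux''. That crux is exactly the theorem; as it stands the proposal is a plan, not a proof. It is also doubtful that such a canonical graph representative exists at all: what $\mathrm{Spec}_{\leq 2}(C^\ast(G))$ remembers is only the edge set $E$, the family $\mathcal{G}$ of $K_{2,2}$-subgraphs, and the adjacency pattern of edges \emph{inside} each such subgraph (Lemma \ref{reps::lemma:edge_bijection_f_from_homeomorphism_of_spectrum}); it forgets all incidences between edges not lying in a common $K_{2,2}$. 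For instance, a path with three edges, a star with three edges and three disjoint edges all have $C^\ast(G)\cong\C^3$ and homeomorphic $\mathrm{Spec}_{\leq 2}$, so any viable $G^{\mathrm{red}}$ would have to collapse these very different graphs to the same object, and nothing in your sketch explains how to do this in general or why the collapsing moves preserve the C*-algebra beyond the easy cases. Your preliminary reduction to connected graphs has the same flaw: a homeomorphism of spectra need not respect connected components of the graphs (the path with two edges and two disjoint edges both give $\C^2$ with two-point spectrum), so you cannot assume the component decompositions match.

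The paper avoids the graph-reconstruction problem entirely. From the homeomorphism it extracts only the edge bijection $f\colon E\to E^\prime$ satisfying \eqref{reps::eq:f_preserves_K22_subgraphs} and \eqref{reps::eq:f_preserves_adjacency}, and then builds the isomorphism directly on generators, setting $\varphi_f(p_x)=\sum_{\{u^\prime,v^\prime\}\in I(x)}\hat p_{u^\prime}\hat p_{v^\prime}$ with $I(x)=\{f(e)\mid x\in e\}$. The substance of the argument is the verification that these elements are projections satisfying \eqref{bga::eq:partition_relation} and \eqref{bga::eq:orthogonality_relation} (Lemmas \ref{class::lemma:properties_of_I(x)}--\ref{class::lemma:Px_are_orthogonal}, which use the $K_{2,2}$-preservation of $f$ in an essential way) and that $\varphi_{f^{-1}}\circ\varphi_f=\mathrm{id}$ (Lemma \ref{class::lemma:phi_is_invertible}). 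None of this work, or any substitute for it, appears in your proposal, so the argument has a genuine gap rather than being an alternative correct route.
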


Recall that bipartite graph C*-algebras are special cases of hypergraph C*-algebras. Hence, Theorem \ref{intro::thm:classification_of_bipartite_graph_algebras} provides a first step towards a classification of hypergraph C*-algebras. As mentioned above, another open question for hypergraph C*-algebras is which of them are nuclear. It is a simple observation that a bipartite graph C*-algebra $C^\ast(G)$ is not nuclear whenever $K_{2,3} \subset G$, see Corollary \ref{bga::corollary:K23_implies_not_nuclear}. However, it is not known if the converse holds. In upcoming work we investigate a special class of bipartite graphs $G$ with $K_{2,3} \not \subset G$, namely the hypercubes $Q_n$. For these graphs we obtain an explicit description of $C^\ast(G)$ as algebra of continuous functions and, thus, we can show that they are nuclear.

\subsection{Outline}

In \textbf{Section \ref{sec::bipartite_graph_algebras}} we introduce bipartite graph C*-algebras and discuss some of their properties including a set of alternative generators and the connection to hypergraph C*-algebras. The next \textbf{Section \ref{sec::generic_position}} extends results of Vasilevski \cite{vasilevski_c-algebras_1998} on projections in generic position to $G$-projection families associated to a bipartite graph $G$. In \textbf{Section \ref{sec::one_and_two_dimensional_representations}} we describe the space of one- and two-dimensional irreducible representations of a bipartite graph C*-algebra $C^\ast(G)$ in terms of the combinatorial structure of $G$. Finally, in \textbf{Section \ref{sec::classification}} we prove the classification result from Theorem \ref{intro::thm:classification_of_bipartite_graph_algebras}.

\subsection{Acknowledgements}

The author would like to thank his supervisor Moritz Weber for many helpful discussions and comments on earlier versions of this article. This work is part of the author's PhD thesis and a contribution to the SFB-TRR 195.

\section{Bipartite graph C*-algebras}
\label{sec::bipartite_graph_algebras}

In this section, we introduce the main object of our investigations: a class of C*-algebras that are generated by two universal finite partitions of unity which satisfy certain orthogonality relations. The precise relations are given by a bipartite graph, and this is why we call the obtained C*-algebras ``bipartite graph C*-algebras''. 

\subsection{Bipartite graphs}

First, let us recall the notion of a bipartite graph and settle some notation. Throughout this paper all graphs are finite.

\begin{definition}
    A bipartite graph $G$ consists of two finite vertex sets $U$ and $V$ together with an edge set $E \subset \{\{u,v\}\mid u \in U, v \in V\}$. We write $G = (U, V, E)$.
\end{definition}
 
For two vertices in a bipartite graph $G = (U, V, E)$ let us write $u \sim v$ if $\{u,v\} \in E$ and let $\mathcal{N}(u) := \{v \in V\mid v \sim u\}$ ($\mathcal{N}(v) := \{u \in U\mid v \sim u\}$) be the neighbors of $u$ ($v$). A path $\mu$ in a bipartite graph $G$ is a finite sequence of vertices $v_1 \dots v_n$ such that $v_i \sim v_{i+1}$ holds for all $i < n$. For every $i < n$, we say that $v_i$ is contained in $\mu$. We set $s(\mu) = v_1$ and $r(\mu) = v_n$. For two paths $\mu=v_1 \dots v_m$ and $\nu = u_1 \dots u_n$ with $r(\mu) = s(\nu)$ we write $\mu \nu$ for the combined path $v_1 \dots v_m u_1 \dots u_n$.

\begin{example}
    For $m, n \in \N$, the complete bipartite graph $K_{m,n} = (U, V, E)$ of order $(m,n)$ is given by 
    \begin{align*}
        U &= \{u_1, \dots, u_m\}, \\
        V &= \{v_1, \dots, v_n\}, \\
        E &= \{\{u_i, v_j\}\mid i = 1, \dots, m, j = 1, \dots, n\}.
    \end{align*}
    In particular, the graph $K_{2,2}$ is described by the sketch below.
    \begin{center}
        \begin{tikzpicture}
            \draw (-1,2) -- (1, 2);
            \draw (-1,1) -- (1, 1);
            \draw (-1,2) -- (1, 1);
            \draw (-1,1) -- (1, 2);
            \node[label=180:$u_1$, fill=black, circle, inner sep=.05cm] at (-1, 2) {};
            \node[label=180:$u_2$, fill=black, circle, inner sep=.05cm] at (-1, 1) {};
            \node[label=0:$v_1$, fill=white, draw=black, circle, inner sep=.05cm] at (1, 2) {};
            \node[label=0:$v_2$, fill=white, draw=black, circle, inner sep=.05cm] at (1, 1) {};
        \end{tikzpicture}
    \end{center}
\end{example}

\begin{definition}
    \label{bga::def:bipartite_graph_operations}
    Let $G = (U, V, E)$ and $G^\prime = (U^\prime, V^\prime, E^\prime)$ be bipartite graphs.
    \begin{enumerate}
        \item $G^\prime$ is a subgraph of $G$, written $G^\prime \subset G$, if 
        \begin{itemize}
            \item $U^\prime \subset U$,
            \item $V^\prime \subset V$, and 
            \item $E^\prime \subset \{\{u, v\} \in E\mid u \in U^\prime, v \in V^\prime\}$,
        \end{itemize}
        or if the same holds after swapping $U^\prime$ and $V^\prime$.
        \item $G^\prime$ is the subgraph of $G$ induced by the set $\{x_1, \dots, x_n\} \subset U \cup V$, written $G^\prime = G(x_1, \dots, x_n)$ if
        \begin{itemize}
            \item $U^\prime = U \cap \{x_1, \dots, x_n\}$,
            \item $V^\prime = V \cap \{x_1, \dots, x_n\}$, and
            \item $E^\prime = \{e \in E\mid e \subset \{x_1, \dots, x_n\}\}$.
        \end{itemize}
        Similarly, $G^\prime$ is the subgraph of $G$ induced by the set $\{e_1, \dots, e_n\} \subset E$, written $G^\prime = G(e_1, \dots, e_n)$ if
        \begin{itemize}
            \item $U^\prime = U \cap \left( \bigcup \{e_1, \dots, e_n\} \right)$,
            \item $V^\prime = V \cap \left( \bigcup \{e_1, \dots, e_n\} \right)$,
            \item $E^\prime = \{e_1, \dots, e_n\}$.
        \end{itemize}
        \item $G^\prime$ is isomorphic to $G$, written $G^\prime \cong G$, if there are two bijective maps $\varphi: U \to U^\prime$ and $\psi: V \to V^\prime$ such that 
        $$ E^\prime = \{\{\varphi(u), \psi(v)\}\mid u \in U, v \in V, \{u, v\} \in E\}, $$
        or if the same holds after swapping $U^\prime$ and $V^\prime$.
    \end{enumerate}
\end{definition}

\subsection{Definition of bipartite graph C*-algebras}

Let us now introduce bipartite graph C*-algebras.

\begin{definition} 
    \label{bipartite_graph_algebras:definition}
    Given a bipartite graph $G = (U, V, E)$ let $C^\ast(G)$ be the universal $C^\ast$-algebra generated by a family of projections $(p_x)_{x \in U \cup V}$ subject to the following relations:
    \begin{align}
        \sum_{u \in U} p_u &= 1 = \sum_{v \in V} p_v,        \tag{GP1} \label{bga::eq:partition_relation} \\
        p_u p_v &= 0 \; \text{ if } \{u, v\} \not \in E.     \tag{GP2} \label{bga::eq:orthogonality_relation}
    \end{align}
    
    We call a family of projections $(P_x)_{x \in U \cup V} \subset B(\mathcal{H})$ on a Hilbert space $\mathcal{H}$ a $G$-projection family if they satisfy the relations \eqref{bga::eq:partition_relation} and \eqref{bga::eq:orthogonality_relation}.
\end{definition}

Recall that the universal $C^\ast$-algebra generated by elements $(x_i)_i$ subject to relations $(R_j)_j$ is the unique $C^\ast$-algebra $A$ generated by elements $x_i$ with the following universal property: Whenever another $C^\ast$-algebra $B$ is generated by elements $(y_i)_i$ subject to the same relations $(R_j)_j$, then there is a unique $\ast$-homomorphism $\varphi: A \to B$ such that $\varphi(x_i) = y_i$ for all $i$. Depending on the generators and relations this $C^\ast$-algebra need not exist. In the special case above, the relations include that the $p_x$ are projections, and in this case the existence of the universal $C^\ast$-algebra is guaranteed. For a reference see e.g. \cite[Section II.8.3]{blackadar_operator_2006}.

\begin{example}
    \label{bga::example:C_nm_as_bipartite_graph_algebra}
    Consider the complete bipartite graph $K_{2,2}$ of order $(2,2)$. Then $C^\ast(K_{2,2}) = C^\ast(\Z_2 \ast \Z_2)$ is the universal unital $C^\ast$-algebra $C^\ast(p,q)$ generated by two projections. Indeed, the two left vertices of $K_{2,2}$ are associated to the projections $p$ and $1-p$, while the two right vertices are associated to the projections $q$ and $1-q$. As $K_{2,2}$ is a complete graph there are no orthogonality requirements on these two partitions of unity. More generally, we have $C^\ast(K_{n,m}) \cong \C^n *_\C \C^m$ for all $n,m \in \N \setminus \{0\}$.
\end{example}

Using the relations \eqref{bga::eq:partition_relation} and \eqref{bga::eq:orthogonality_relation} one easily sees that a dense subset of $C^\ast(G)$ is spanned by elements associated to paths in $G$.

\begin{proposition} 
    \label{prop::bipartite_graph_algebras:definition:dense_subset}
    Let $G$ be a bipartite graph as in Definition \ref{bipartite_graph_algebras:definition}, and for every path $\mu = x_1 \dots x_n$ in $G$, write $p_\mu := p_{x_1} \cdots p_{x_n}$ for the associated element in $C^\ast(G)$. Then the elements $p_\mu$ span a dense subset of $C^\ast(G)$. 
\end{proposition}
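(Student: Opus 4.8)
The plan is to show that the linear span of the path elements $p_\mu$ is a $\ast$-subalgebra of $C^\ast(G)$ that contains the generators, from which density follows immediately since $C^\ast(G)$ is by definition generated by the $p_x$. First I would set up conventions: every single vertex $x \in U \cup V$ is a (length-one) path, so each generator $p_x = p_\mu$ lies in the span; I would also allow the empty path, with $p_\emptyset := 1$, to take care of the unit (alternatively, note $1 = \sum_{u} p_u$ is already a sum of path elements). The adjoint is easy: since each $p_x$ is self-adjoint, $p_\mu^\ast = p_{x_n} \cdots p_{x_1} = p_{\bar\mu}$ where $\bar\mu$ is the reversed path, which is again a path because the adjacency relation $\sim$ is symmetric; hence the span is closed under $\ast$.

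The substantive point is closure under multiplication, i.e.\ that a product $p_\mu p_\nu$ of two path elements is again a linear combination of path elements. Write $\mu = x_1 \cdots x_m$ and $\nu = y_1 \cdots y_n$. If $x_m$ and $y_1$ lie on opposite sides of the bipartition (one in $U$, one in $V$), then there are two cases: if $\{x_m, y_1\} \in E$, the concatenation $\mu\nu = x_1 \cdots x_m y_1 \cdots y_n$ is a genuine path and $p_\mu p_\nu = p_{\mu\nu}$; if $\{x_m, y_1\} \notin E$, then by \eqref{bga::eq:orthogonality_relation} we get $p_{x_m} p_{y_1} = 0$, so $p_\mu p_\nu = 0$. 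If instead $x_m$ and $y_1$ lie on the same side, say both in $U$, then the middle juncture is a product $p_{x_m} p_{y_1}$ of two projections from the \emph{same} partition of unity; these are orthogonal, so $p_{x_m} p_{y_1} = \delta_{x_m, y_1} p_{x_m}$. Thus $p_\mu p_\nu$ is either $0$ (if $x_m \neq y_1$) or equals $p_{x_1} \cdots p_{x_m} p_{y_2} \cdots p_{y_n}$ (if $x_m = y_1$), and in the latter case $x_1 \cdots x_m y_2 \cdots y_n$ is again a path since $x_m = y_1 \sim y_2$. In every case the product is $0$ or a path element, so the span is multiplicatively closed.

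I don't anticipate a real obstacle here; the only mild subtlety is bookkeeping around the unit and length-one/empty paths, which is why I would fix those conventions up front. Combining the three observations, $\mathrm{span}\{p_\mu \mid \mu \text{ a path in } G\}$ is a unital $\ast$-subalgebra containing all generators $p_x$, hence it is dense in $C^\ast(G)$ by definition of the generated C*-algebra. \qed
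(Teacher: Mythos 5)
Your proposal is correct and rests on the same two observations as the paper's own proof: consecutive distinct vertices on the same side of the bipartition give a vanishing product by \eqref{bga::eq:partition_relation}, and non-adjacent vertices on opposite sides give a vanishing product by \eqref{bga::eq:orthogonality_relation}. The paper packages this as a reduction of arbitrary words in the generators (which obviously span a dense subset) to path words by showing that every non-path word vanishes, whereas you package it as closure of the span of path elements under adjoints and multiplication; this is essentially the same argument in a slightly different arrangement.
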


\begin{proof}
    Evidently, a dense subset of $C^\ast(G)$ is spanned by arbitrary products of the form $a = p_{x_1} p_{x_2} \cdots p_{x_n}$ with $x_1, \dots, x_n \in U \cup V$. Since the $p_{x_i}$ are projections we may assume without loss of generality that $x_i \neq x_{i+1}$ holds for all $i < n$. 
    
    To prove the statement, it suffices to observe that $a$ vanishes whenever $x_1 \dots x_n$ is not a path in $G$. Indeed, whenever $x_i$ and $x_{i+1}$ for $i < n$ are both in $U$ or both in $V$, then $p_{x_i} p_{x_{i+1}} = 0$, for $x_i \neq x_{i+1}$ (by assumption) and $(p_u)_{u \in U}$ (resp. $(p_v)_{v \in V}$) is a family of pairwise orthogonal projections by \eqref{bga::eq:partition_relation}. Thus, $a$ vanishes if the $x_i$ are not alternatingly from $U$ and $V$. Finally, assume without loss of generality that $x_i \in U$ and $x_{i+1} \in V$ for $i < n$. Then $p_{x_i} p_{x_{i+1}} = 0$ unless $\{x_i, x_{i+1}\} \in E$ by \eqref{bga::eq:orthogonality_relation}. This proves the claim.
\end{proof}

In a similar way as the last proposition, one obtains the next lemma which will be useful later.

\begin{lemma}
    \label{bga::lemma:nontrivial_p_x2 p_y p_x2_and_K22_subgraph}
    Let $G = (U, V, E)$ be a bipartite graph, and let $x_1, x_2, y \in U \cup V$ be three distinct vertices. Then 
    \begin{align*}
        p_{x_1} p_y p_{x_2} \neq 0
        \quad \implies \quad 
        \{y\} \subsetneq \mathcal{N}(x_1) \cap \mathcal{N}(x_2).
    \end{align*}
    In particular, in this case there must be a fourth vertex $y^\prime$ such that 
    $$
    G(x_1, x_2, y, y^\prime) \cong K_{2,2}.
    $$
\end{lemma}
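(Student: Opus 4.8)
The plan is to argue in the spirit of the proof of Proposition~\ref{prop::bipartite_graph_algebras:definition:dense_subset}, combining the orthogonality relation \eqref{bga::eq:orthogonality_relation} with the partition-of-unity relation \eqref{bga::eq:partition_relation}. First I would put the three vertices into a normal form. Assume $p_{x_1} p_y p_{x_2} \neq 0$. As observed in the proof of Proposition~\ref{prop::bipartite_graph_algebras:definition:dense_subset}, a product of generators indexed by a sequence that is not a path vanishes; since $x_1, x_2, y$ are pairwise distinct, this forces $x_1 y x_2$ to be a path, so $x_1$ and $x_2$ lie on the same side of the bipartition while $y$ lies on the other side, with $x_1 \sim y$ and $y \sim x_2$. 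In particular $y \in \mathcal{N}(x_1) \cap \mathcal{N}(x_2)$, so $\{y\} \subseteq \mathcal{N}(x_1) \cap \mathcal{N}(x_2)$, and it only remains to show that this inclusion is strict. Without loss of generality $x_1, x_2 \in U$ and $y \in V$.

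Next I would exploit that $p_{x_1}$ and $p_{x_2}$ are orthogonal projections, being distinct members of the partition of unity $(p_u)_{u \in U}$, so $p_{x_1} p_{x_2} = 0$. Inserting the resolution of the identity $1 = \sum_{v \in V} p_v$ between them and using \eqref{bga::eq:orthogonality_relation} to discard every term with $v \notin \mathcal{N}(x_1) \cap \mathcal{N}(x_2)$ yields
\[
0 = p_{x_1} p_{x_2} = \sum_{v \in V} p_{x_1} p_v p_{x_2} = \sum_{v \in \mathcal{N}(x_1) \cap \mathcal{N}(x_2)} p_{x_1} p_v p_{x_2}.
\]
If $\mathcal{N}(x_1) \cap \mathcal{N}(x_2)$ were equal to $\{y\}$, the right-hand side would reduce to the single term $p_{x_1} p_y p_{x_2}$, forcing it to be $0$ and contradicting the hypothesis. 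Hence there is some $y' \in \bigl( \mathcal{N}(x_1) \cap \mathcal{N}(x_2) \bigr) \setminus \{y\}$, which proves $\{y\} \subsetneq \mathcal{N}(x_1) \cap \mathcal{N}(x_2)$.

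For the final assertion I would note that such a $y'$ lies in $V$, so the four vertices $x_1, x_2, y, y'$ are pairwise distinct with $x_1, x_2 \in U$ and $y, y' \in V$. By construction all four ``crossing'' pairs $\{x_1, y\}$, $\{x_1, y'\}$, $\{x_2, y\}$, $\{x_2, y'\}$ are edges of $G$, and since $G$ is bipartite these are the only pairs among the four vertices that can be edges. Hence the induced subgraph $G(x_1, x_2, y, y')$ has precisely these four edges and is isomorphic to $K_{2,2}$.

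I do not expect a genuine obstacle here; the only point that needs care is the bookkeeping in the first step — that $p_{x_1} p_y p_{x_2} \neq 0$ really does force $x_1$ and $x_2$ onto the same side — which is exactly the vanishing argument already carried out in Proposition~\ref{prop::bipartite_graph_algebras:definition:dense_subset} and can simply be invoked. The one idea driving the proof is sandwiching $1 = \sum_{v \in V} p_v$ between the two orthogonal projections $p_{x_1}$ and $p_{x_2}$.
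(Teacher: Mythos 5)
Your proof is correct and is essentially the paper's argument: the paper also inserts the partition of unity $\sum_u p_u$ between $p_{x_1}$ and $p_{x_2}$ and uses \eqref{bga::eq:orthogonality_relation} to kill all terms except common neighbors, merely phrased as a contraposition (showing $p_{x_1}p_yp_{x_2}=p_{x_1}p_{x_2}=0$ if $y$ were the only common neighbor) rather than your contradiction. The only cosmetic difference is that you first normalize via the path argument of Proposition \ref{prop::bipartite_graph_algebras:definition:dense_subset}, which the paper handles directly with \eqref{bga::eq:orthogonality_relation}.
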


\begin{proof}
    Assume $p_{x_1} p_y p_{x_2} \neq 0$ and assume that the statement on the right-hand side is not true, i.e. $\mathcal{N}(x_1) \cap \mathcal{N}(x_2)$ contains at most the vertex $y$. If $y$ is not contained, then one has directly $p_{x_1} p_y = 0$ or $p_y p_{x_2} = 0$ thanks to \eqref{bga::eq:orthogonality_relation} and the fact that $x_1, y, x_2$ are distinct. Otherwise, assume without loss of generality $x_1, x_2 \in V$ and $y \in U$. Then one observes 
    \begin{align*}
        p_{x_1} p_y p_{x_2} = p_{x_1} \left( \sum_{u \in U} p_u \right) p_{x_2} = p_{x_1} p_{x_2}= 0.
    \end{align*}
    Indeed, for all $u \in U \setminus \{y\}$ it is $y \not \in \mathcal{N}(x_1)$ or $y \not \in \mathcal{N}(x_2)$. Hence, for these $u$ \eqref{bga::eq:orthogonality_relation} yields $p_{x_1} p_u p_{x_2} = 0$. The latter equality follows directly from \eqref{bga::eq:partition_relation} since $\sum_{u \in U} p_u = 1$. Finally, we have $p_{x_1} p_{x_2} = 0$, for $x_1 \neq x_2$ and $(p_v)_{v \in V}$ is a family of pairwise orthogonal projections by \eqref{bga::eq:partition_relation}.
\end{proof}

Let us discuss how bipartite graph C*-algebras behave with respect to subgraphs.

\begin{proposition}
    \label{bga::proposition:algebra_of_subgraphs}
    Let $G = (U, V, E)$ be a bipartite graph and let $H = (U^\prime, V^\prime, E^\prime)$ be a subgraph. Then $C^\ast(H) \cong C^\ast(G) / (S)$, where $(S) \subset C^\ast(G)$ is the ideal generated by the elements $p_x$ with $x \not \in U^\prime \cup V^\prime$.
\end{proposition}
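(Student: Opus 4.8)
The plan is to realise the isomorphism through the universal properties of the two algebras by exhibiting mutually inverse $\ast$-homomorphisms. Write $p_x$ for the canonical generators of $C^\ast(G)$ and $q_x$ (for $x \in U^\prime \cup V^\prime$) for those of $C^\ast(H)$, and let $\pi \colon C^\ast(G) \to C^\ast(G)/(S)$ denote the quotient map. First I would construct a $\ast$-homomorphism $\Psi \colon C^\ast(G) \to C^\ast(H)$ by setting $\Psi(p_x) = q_x$ for $x \in U^\prime \cup V^\prime$ and $\Psi(p_x) = 0$ otherwise. The family $(\Psi(p_x))_{x \in U \cup V}$ consists of projections; it satisfies \eqref{bga::eq:partition_relation} because $\sum_{u \in U}\Psi(p_u) = \sum_{u \in U^\prime} q_u = 1$ (and likewise over $V$), and it satisfies \eqref{bga::eq:orthogonality_relation}: if $\{u,v\} \notin E$ then, since $E^\prime \subseteq E$, also $\{u,v\} \notin E^\prime$, so $q_u q_v = 0$ when $u \in U^\prime, v \in V^\prime$, while otherwise one of the two factors is already $0$. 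By universality of $C^\ast(G)$ the map $\Psi$ exists, and since it sends every $p_x$ with $x \notin U^\prime \cup V^\prime$ to $0$, the ideal $(S)$ lies in its kernel and $\Psi$ descends to $\overline{\Psi} \colon C^\ast(G)/(S) \to C^\ast(H)$.

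For the converse I would use the projections $\pi(p_x)$, $x \in U^\prime \cup V^\prime$, in $C^\ast(G)/(S)$. They satisfy \eqref{bga::eq:partition_relation}, since $\sum_{u \in U^\prime}\pi(p_u) = \pi\bigl(\sum_{u \in U} p_u\bigr) = 1$ — the missing summands $\pi(p_u)$ with $u \in U \setminus U^\prime$ vanish, lying in $S$ — and likewise over $V^\prime$; and they satisfy \eqref{bga::eq:orthogonality_relation}, i.e. $\pi(p_u)\pi(p_v) = 0$ whenever $u \in U^\prime, v \in V^\prime$ with $\{u,v\} \notin E^\prime$, because such an edge is then absent from $E$ as well, so that $p_u p_v = 0$ already in $C^\ast(G)$. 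Universality of $C^\ast(H)$ thus yields a $\ast$-homomorphism $\Phi \colon C^\ast(H) \to C^\ast(G)/(S)$ with $\Phi(q_x) = \pi(p_x)$.

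It then remains to check that $\overline{\Psi}$ and $\Phi$ are mutually inverse, which is immediate on generators: $\overline{\Psi}(\Phi(q_x)) = \Psi(p_x) = q_x$ for $x \in U^\prime \cup V^\prime$, and $C^\ast(G)/(S)$ is generated by the elements $\pi(p_x)$ with $x \in U^\prime \cup V^\prime$ (the remaining ones being $0$), on which $\Phi(\overline{\Psi}(\pi(p_x))) = \Phi(q_x) = \pi(p_x)$. The only step requiring genuine care is the orthogonality verification in the second paragraph: it uses that every edge of $G$ joining two vertices of $H$ is an edge of $H$, i.e. that $H$ is the subgraph of $G$ induced by $U^\prime \cup V^\prime$. (For a non-induced subgraph $C^\ast(H)$ carries strictly more orthogonality relations, $\Phi$ need not exist, and one obtains from the first paragraph only a surjection $C^\ast(G)/(S) \twoheadrightarrow C^\ast(H)$.) Everything else is a routine application of the universal properties.
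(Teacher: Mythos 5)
Your proof is correct and follows essentially the same route as the paper: the paper constructs exactly your map $\Psi$ (there called $\pi$) from the universal property of $C^\ast(G)$ and then simply asserts that its kernel is $(S)$, while your second and third paragraphs supply the verification of that assertion by producing the inverse $\Phi$ on the quotient. Your closing caveat is well taken and in fact necessary: with the paper's notion of subgraph (Definition \ref{bga::def:bipartite_graph_operations}(1)) $H$ need not be induced, and then the statement itself fails, not merely your construction of $\Phi$. For instance, take $G = K_{2,2}$ and let $H$ have the same four vertices but only the two edges $\{u_1,v_1\}$ and $\{u_2,v_2\}$; then $(S) = 0$, so $C^\ast(G)/(S) \cong C^\ast(p,q)$ is infinite-dimensional, whereas in $C^\ast(H)$ the relations force $p_{u_i} = p_{v_i}$, giving $C^\ast(H) \cong \C^2$. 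So the proposition should be read for induced subgraphs, i.e. with $E^\prime = \{e \in E \mid e \subset U^\prime \cup V^\prime\}$; this is harmless for the rest of the paper, since in its applications (Corollary \ref{bga::corollary:K23_implies_not_nuclear} and Lemma \ref{reps::lemma:Spec<=2(C*G)}) $H$ is a complete bipartite subgraph on subsets of $U$ and $V$ and hence automatically induced, and in general one retains, as you note, the surjection $C^\ast(G)/(S) \twoheadrightarrow C^\ast(H)$.
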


\begin{proof}
    Let $(\hat{p}_x)_{x \in U^\prime \cup V^\prime}$ be the generators of $C^\ast(H)$ and set
    \begin{align*}
        P_x := \begin{cases}
            \hat{p}_x, &\text{if } x \in U^\prime \cup V^\prime, \\
            0,   &\text{otherwise},
            \end{cases}
    \end{align*}
    for all $x \in U \cup V$. Then $(P_x)_{x \in U \cup V}$ is a $G$-projection family, and the universal property of $C^\ast(G)$ yields a $\ast$-homomorphism $\pi: C^\ast(G) \to C^\ast(H)$ such that $\pi(p_x) = P_x$ for all $x \in U \cup V$. Evidently, $\pi$ is surjective and its kernel is $(S)$.
\end{proof}

\begin{corollary}
    \label{bga::corollary:K23_implies_not_nuclear}
    Let $G$ be a bipartite graph with $K_{2,3} \subset G$. Then $C^\ast(G)$ is neither nuclear nor exact.
\end{corollary}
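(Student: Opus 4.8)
The plan is to reduce the statement to the known non-nuclearity (and non-exactness) of the full group C*-algebra of a nonabelian free group, and then use Proposition \ref{bga::proposition:algebra_of_subgraphs} together with hereditary properties of nuclearity and exactness. First I would observe that nuclearity passes to quotients and exactness passes to subalgebras and quotients; in particular, if $C^\ast(G)$ were nuclear (resp.\ exact), then every quotient $C^\ast(G)/(S)$ would be nuclear (resp.\ exact). Since $K_{2,3} \subset G$, Proposition \ref{bga::proposition:algebra_of_subgraphs} gives $C^\ast(K_{2,3}) \cong C^\ast(G)/(S)$ for a suitable ideal $(S)$, so it suffices to prove that $C^\ast(K_{2,3})$ is neither nuclear nor exact.

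The core computation is therefore to identify $C^\ast(K_{2,3})$. By Example \ref{bga::example:C_nm_as_bipartite_graph_algebra} we have $C^\ast(K_{2,3}) \cong \C^2 \ast_{\C} \C^3$. One then checks that this unital free product surjects onto (or contains a suitable copy of) a C*-algebra known to be non-exact: concretely, $\C^2 \ast_{\C} \C^3$ has the full group C*-algebra $C^\ast(\Z_2 \ast \Z_3)$ as a quotient — the two minimal projections of $\C^2$ generate a copy of $C^\ast(\Z_2)$ and the three minimal projections of $\C^3$ generate a copy of $C^\ast(\Z_3)$ (sending the partition of unity to the spectral projections of a generator of $\Z_3$), and freeness of the C*-algebraic free product matches the universal property of $C^\ast(\Z_2 \ast \Z_3)$. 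Since $\Z_2 \ast \Z_3 \cong \mathrm{PSL}(2,\Z)$ contains a nonabelian free subgroup of finite index, $C^\ast(\Z_2 \ast \Z_3)$ is not exact, hence not nuclear; and a quotient of an exact C*-algebra is exact, so $\C^2 \ast_\C \C^3$ is not exact either, and a fortiori not nuclear.

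I expect the main obstacle to be bookkeeping rather than anything deep: one must be careful that the relevant permanence properties are applied in the right direction (nuclearity is not inherited by arbitrary subalgebras, so the argument really does need the \emph{quotient} description from Proposition \ref{bga::proposition:algebra_of_subgraphs}, not merely an embedding), and one should cite the standard facts precisely — that $C^\ast_{\mathrm{full}}(F_2)$ is non-exact \cite[see e.g.][]{brown_textbook_2008}, that exactness passes to quotients, and that nuclear implies exact. A minor point to verify cleanly is that the free product $\C^2 \ast_{\C} \C^3$ genuinely has $C^\ast(\Z_2 \ast \Z_3)$ as a quotient; this follows from the universal properties of both sides, since a projection is the same data as a unitary order-two element together with a chosen eigenspace decomposition, and a partition of unity into three projections is the same data as a unitary representation of $\Z_3$. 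Alternatively, and perhaps more cleanly for the write-up, one can invoke directly that $\C^n \ast_\C \C^m$ is non-nuclear (equivalently contains, or surjects onto an algebra containing, a free group factor) whenever $n \geq 2$ and $m \geq 3$, which is exactly the case $K_{n,m}$ with $K_{2,3}\subset K_{n,m}$.
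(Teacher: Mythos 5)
Your proposal is correct and follows essentially the same route as the paper: reduce via Proposition \ref{bga::proposition:algebra_of_subgraphs} and Example \ref{bga::example:C_nm_as_bipartite_graph_algebra} to the quotient $\C^2 \ast_\C \C^3 \cong C^\ast(K_{2,3})$ of $C^\ast(G)$, and use that nuclearity and exactness pass to quotients. The only difference is that the paper simply cites the non-nuclearity and non-exactness of $\C^2 \ast_\C \C^3$ as well known, whereas you spell it out via $\C^2 \ast_\C \C^3 \cong C^\ast(\Z_2 \ast \Z_3)$ and the non-exact $C^\ast(F_2)$ sitting inside it, which is a fine (indeed, the standard) justification.
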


\begin{proof}
    Combining the previous Proposition \ref{bga::proposition:algebra_of_subgraphs} and Example \ref{bga::example:C_nm_as_bipartite_graph_algebra}, we see that the bipartite graph C*-algebra $\C^2 *_\C \C^3$ is a quotient of $C^\ast(G)$ as soon as $K_{2,3} \subset G$ holds. It is well-known that $\C^2 *_\C \C^3$ is neither nuclear nor exact and both properties are preserved under taking quotients. Therefore, $C^\ast(G)$ is neither nuclear nor exact as well.
\end{proof}

We end the section with a technical looking lemma that will be useful later in the classification of bipartite graph C*-algebras.

\begin{lemma}
    \label{bga::lemma:loose_edge_gives_direct_summand_C}
    Let $G = (U, V, E)$ be a bipartite graph and assume that $e = \{u, v\} \in E$ is not contained in a subgraph of $G$ that is isomorphic to $K_{2,2}$. Then, we have
    \begin{align*}
        C^\ast(G) \cong C^\ast(G^\prime) \oplus \C,
    \end{align*}
    where $G^\prime$ is obtained by deleting the edge $e$ from $G$. The isomorphism sends $p_x \in C^\ast(G)$ to the corresponding element $(p_x, 0) \in C^\ast(G^\prime) \oplus \C$ if $x \not \in e$, and otherwise to $(p_x,1)$.
\end{lemma}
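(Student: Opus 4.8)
The plan is to produce a central projection $z \in C^\ast(G)$ with $z\,C^\ast(G) \cong \C$ and $(1-z)\,C^\ast(G) \cong C^\ast(G^\prime)$, and then to read off the asserted decomposition from $C^\ast(G) = z\,C^\ast(G) \oplus (1-z)\,C^\ast(G)$. The candidate is $z := p_u p_v$.

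The first and decisive step — the only place where the hypothesis on $e$ enters — is to show that $p_u$ and $p_v$ commute. It suffices to check $p_u p_v = p_v p_u p_v$, since taking adjoints then gives $p_v p_u = p_v p_u p_v$ as well, hence $p_u p_v = p_v p_u$. Now $(1-p_v)p_u p_v = \sum_{v^\prime \in V \setminus \{v\}} p_{v^\prime} p_u p_v$ by \eqref{bga::eq:partition_relation}, and for $v^\prime \notin \mathcal{N}(u)$ the corresponding term vanishes already by \eqref{bga::eq:orthogonality_relation}; so I only have to show $p_{v^\prime} p_u p_v = 0$ for $v^\prime \in \mathcal{N}(u) \setminus \{v\}$. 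For this I use the identity $p_v = \sum_{u^\prime \in \mathcal{N}(v)} p_{u^\prime} p_v = p_u p_v + \sum_{u^\prime \in \mathcal{N}(v) \setminus \{u\}} p_{u^\prime} p_v$, which holds because $\sum_{u^{\prime\prime} \in U} p_{u^{\prime\prime}} = 1$ and $p_{u^{\prime\prime}} p_v = 0$ for $u^{\prime\prime} \notin \mathcal{N}(v)$ by \eqref{bga::eq:orthogonality_relation}. Multiplying this identity on the left by $p_{v^\prime}$ and using $p_{v^\prime} p_v = 0$ (the $(p_{v^{\prime\prime}})_{v^{\prime\prime} \in V}$ are pairwise orthogonal by \eqref{bga::eq:partition_relation}) yields $0 = p_{v^\prime} p_u p_v + \sum_{u^\prime \in \mathcal{N}(v) \setminus \{u\}} p_{v^\prime} p_{u^\prime} p_v$. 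The assumption that $e = \{u,v\}$ lies in no subgraph isomorphic to $K_{2,2}$ says exactly that $\{u^\prime, v^\prime\} \notin E$ whenever $u^\prime \in \mathcal{N}(v) \setminus \{u\}$ and $v^\prime \in \mathcal{N}(u) \setminus \{v\}$; hence every summand vanishes by \eqref{bga::eq:orthogonality_relation}, and $p_{v^\prime} p_u p_v = 0$ follows.

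With commutativity in hand, $z = p_u p_v = p_v p_u$ is a projection, and a direct computation using \eqref{bga::eq:partition_relation}, \eqref{bga::eq:orthogonality_relation} and $p_u p_v = p_v p_u$ shows $z p_x = p_x z$ for every generator, with $z p_x = z$ when $x \in e$ and $z p_x = 0$ otherwise; thus $z$ is central, and $z \neq 0$ as witnessed by the $1$-dimensional representation $p_x \mapsto 1$ for $x \in e$, $p_x \mapsto 0$ otherwise. Consequently $z\,C^\ast(G)$ is a unital C*-algebra with unit $z$, generated by $\{z p_x\} \subseteq \{0, z\}$, so $z\,C^\ast(G) = \C z \cong \C$. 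For the complementary summand, the universal property gives a surjection $\pi \colon C^\ast(G) \to C^\ast(G^\prime)$ sending $p_x$ to the corresponding generator $\hat p_x$ of $C^\ast(G^\prime)$ (the $\hat p_x$ form a $G$-projection family because the edge set of $G^\prime$ is contained in $E$), and $\pi(z) = \hat p_u \hat p_v = 0$ since $e$ is not an edge of $G^\prime$; hence $\pi$ factors through $C^\ast(G)/z\,C^\ast(G) \cong (1-z)\,C^\ast(G)$. Conversely, the family $\bigl((1-z)p_x\bigr)_x$ is a $G^\prime$-projection family in $(1-z)\,C^\ast(G)$ — the only relation needing a check is $\bigl((1-z)p_u\bigr)\bigl((1-z)p_v\bigr) = (1-z)z = 0$ — so the universal property of $C^\ast(G^\prime)$ provides an inverse, giving $(1-z)\,C^\ast(G) \cong C^\ast(G^\prime)$. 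Assembling the two summands and tracking generators through the identifications $z \leftrightarrow 1 \in \C$ and $(1-z)p_x \leftrightarrow \hat p_x$ gives $C^\ast(G) \cong C^\ast(G^\prime) \oplus \C$ with $p_x \mapsto (\hat p_x, 0)$ for $x \notin e$ and $p_x \mapsto (\hat p_x, 1)$ for $x \in e$, as claimed.

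The main obstacle is the commutation relation $p_u p_v = p_v p_u$: all of the combinatorial content of the hypothesis is concentrated there, whereas splitting off the one-dimensional central summand afterwards is routine bookkeeping. A minor point to keep in mind is the degenerate situations $\mathcal{N}(u) = \{v\}$ or $\mathcal{N}(v) = \{u\}$, but these are automatically covered above since the relevant index sets are then empty.
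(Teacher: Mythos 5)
Your proof is correct and takes essentially the same route as the paper: your central projection $z = p_u p_v$ and the family $\bigl((1-z)p_x\bigr)_x$ coincide exactly with the paper's $p_u p_v$ and its modified family $p^\prime_x$, and the resulting decomposition is identical. The only differences are cosmetic — you re-derive the vanishing relations $p_{v^\prime} p_u p_v = 0$ directly from the $K_{2,2}$-free hypothesis (the paper cites its earlier lemma on $p_{x_1} p_y p_{x_2} \neq 0$ forcing a $K_{2,2}$ subgraph) and you write out the universality and non-vanishing checks that the paper leaves as "easily checked".
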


\begin{proof}
    In view of Lemma \ref{bga::lemma:nontrivial_p_x2 p_y p_x2_and_K22_subgraph} one has for all $v^\prime \in V \setminus \{v\}$ and $u^\prime \in U \setminus \{u\}$
    \begin{align*}
        p_v p_u p_{v^\prime} = 0,
        \quad 
        p_u p_v p_{u^\prime} = 0.
    \end{align*}
    Using this it is not hard to show that $p_u p_v = p_v p_u$ is a projection, and it is orthogonal to all $p_x$ with $x \not \in \{u,v\}$.
    Further, it is easily checked that the family $(p_x^\prime)_{x \in U \cup V}$ defined by
    \begin{align*}
        p^\prime_x := \begin{cases}
            p_x, &\text{if } x \not \in e, \\
            p_x - p_u p_v, &\text{if } x \in e,
        \end{cases}
    \end{align*}
    is a universal $G^\prime$-projection family in $C^\ast(G)$. Thus, it follows
    \begin{align*}
        C^\ast(G) \cong C^\ast(p^\prime_x\mid x \in U \cup V) \oplus \C p_{u}p_v \cong C^\ast(G^\prime) \oplus \C,
    \end{align*}
    where the isomorphism maps $p_u p_v \mapsto (0, 1)$.
\end{proof}

\subsection{Connection to hypergraph C*-algebras}

In this section, we discuss the connection between bipartite graph C*-algebras and hypergraph C*-algebras. Hypergraph C*-algebras were introduced by Trieb, Weber and Zenner in \cite{trieb_hypergraph_2024}. With the aim of studying nuclearity of hypergraphs, in \cite{schafer_nuclearity_2024} their definition was slightly extended to so-called undirected hypergraphs. Using that definition the following result was obtained: For every hypergraph $\Ha\Gamma$ one can construct and undirected hypergraph $\Ha\Delta$ such that $C^\ast(\Ha\Gamma)$ is nuclear if and only if $C^\ast(\Ha\Delta)$ is nuclear, see \cite[Theorem 2.5]{schafer_nuclearity_2024}. We will show below that undirected hypergraph C*-algebras are nothing else than bipartite graph C*-algebras, and this explains our interest in the latter. More generally, we believe that a good understanding of bipartite graph C*-algebras is a crucial prerequisite for studying hypergraph C*-algebras.

Let us discuss the precise definition of undirected hypergraph C*-algebras. An undirected hypergraph $\Ha\Gamma$ consists of a vertex set $E^0$, an edge set $E^1$ 
and a ``source map'' $s: E^1 \to \mathcal{P}(E^0) \setminus \{\emptyset\}$. 
The associated hypergraph C*-algebra $C^\ast(\Ha\Gamma)$ is then the universal C*-algebra generated by pairwise orthogonal projections $(p_v)_{v \in E^0}$ and partial isometries $(s_e)_{e \in E^1}$ satisfying for all $e, f \in E^1$ and $v \in E^0$, resp.,
\begin{align*}
    s_e^\ast s_f &= \delta_{ef} s_e, \tag{HR1} \\
    s_e s_e^\ast &\leq \sum_{v \in s(e)} p_v, \tag{HR2} \\
    p_v &\leq \sum_{e \in E^1: v \in s(e)} s_e s_e^\ast, \tag{HR3}
\end{align*}
see \cite[Definition 2.2]{schafer_nuclearity_2024}.

\begin{proposition}
    \label{bga::prop:connection_to_hypergraph_algebras}
    Let $\Ha\Gamma = (E^0, E^1, s)$ be an undirected hypergraph and let the bipartite graph $G = (U, V, E)$ be given by
    \begin{align*}
        U &= E^0, & V &= E^1, & E &= \{\{v, e\} \in E^0 \times E^1\mid v \in s(e)\}.
    \end{align*}
    Then $C^\ast(\Ha\Gamma) \cong C^\ast(G)$ as $C^\ast$-algebras.
\end{proposition}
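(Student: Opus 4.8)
The plan is to exhibit mutually inverse $\ast$-homomorphisms between $C^\ast(\Ha\Gamma)$ and $C^\ast(G)$ by finding, in each algebra, a family of elements satisfying the defining relations of the other, and then invoking the universal properties. Since $G$ is bipartite with $U = E^0$ on one side and $V = E^1$ on the other, the projections $(p_v)_{v \in E^0}$ of $C^\ast(\Ha\Gamma)$ should correspond to the vertex projections $(p_u)_{u \in U}$ of $C^\ast(G)$, and the partial isometry $s_e$ of $C^\ast(\Ha\Gamma)$ should correspond to $s_e s_e^\ast$, which is the vertex projection $p_e$ (for $e \in V$) of $C^\ast(G)$. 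The subtlety is that $C^\ast(\Ha\Gamma)$ remembers the partial isometries $s_e$, not just the projections $s_e s_e^\ast$; one must check that $s_e$ is in fact determined by the projections. This is where the relations (HR1)--(HR3) do their work: (HR1) with $e = f$ says $s_e$ is a partial isometry with $s_e^\ast s_e = s_e s_e^\ast$ — wait, more precisely $s_e^\ast s_e = s_e$, so $s_e$ is a projection (since $s_e^\ast s_e$ is always a projection, equal to $s_e$ forces $s_e = s_e^\ast = s_e^2$). So actually each $s_e$ is already a projection, and $s_e s_e^\ast = s_e$. That simplifies matters considerably.

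First I would record this observation: in $C^\ast(\Ha\Gamma)$ each $s_e$ is a projection with $s_e s_e^\ast = s_e = s_e^\ast s_e$, and (HR1) for $e \neq f$ then reads $s_e s_f = 0$, i.e. the $(s_e)_{e \in E^1}$ are pairwise orthogonal projections; moreover by (HR2) $s_e \leq \sum_{v \in s(e)} p_v$ and by (HR3) $p_v \leq \sum_{e : v \in s(e)} s_e$. Next I would check that $\{s_e\}$ and $\{p_v\}$ together form two partitions of unity subject to the graph orthogonality relations for $G$. Summing (HR3) over all $v \in E^0$ and using that the $p_v$ are pairwise orthogonal gives $1 = \sum_v p_v \leq \sum_v \sum_{e: v \in s(e)} s_e = \sum_e |s(e)| s_e$ — this is not quite the partition relation, so instead I would argue directly: from $p_v \leq \sum_{e : v \in s(e)} s_e s_e^\ast$ and $s_e s_e^\ast \leq \sum_{w \in s(e)} p_w$ one gets $p_v \leq \sum_{e: v \in s(e)} \sum_{w \in s(e)} p_w$; multiplying on both sides by $p_v$ and using orthogonality of distinct $p_w$ recovers $p_v \leq (\#\{e : v \in s(e)\}) p_v$, which is automatic. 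The cleanest route is: show $\sum_{e \in E^1} s_e = 1$ and $\sum_{v \in E^0} p_v = 1$ (the latter is assumed; the former I would derive by showing $1 - \sum_e s_e$ is a projection orthogonal to every $p_v$, hence orthogonal to $\sum_v p_v = 1$, hence zero — using (HR3) to get $p_v \leq \sum_e s_e$ so $p_v(1-\sum_e s_e) = 0$, and (HR2) plus orthogonality of the $s_e$ to get $s_e \leq \sum_{w} p_w = 1$ so $s_e(1 - \sum_f s_f)$ behaves correctly). Then orthogonality: for $v \in E^0$ and $e \in E^1$ with $\{v,e\} \notin E$, i.e. $v \notin s(e)$, I must show $p_v s_e = 0$; this follows because $s_e \leq \sum_{w \in s(e)} p_w$ and $v \notin s(e)$ means $p_v$ is orthogonal to each $p_w$ in that sum, so $p_v s_e = p_v s_e s_e^\ast \leq p_v \sum_{w \in s(e)} p_w \cdot (\dots)$ — more carefully, $p_v s_e s_e^\ast p_v \leq p_v (\sum_{w \in s(e)} p_w) p_v = 0$, hence $s_e^\ast p_v = 0$ and $p_v s_e = 0$. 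Thus $(p_x)_{x \in U \cup V} := (p_v)_{v} \cup (s_e)_e$ is a $G$-projection family, so the universal property of $C^\ast(G)$ yields a $\ast$-homomorphism $\Phi : C^\ast(G) \to C^\ast(\Ha\Gamma)$.

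For the reverse direction, in $C^\ast(G)$ I would set $P_v := p_v$ for $v \in E^0 = U$ and $S_e := p_e$ for $e \in E^1 = V$, and verify (HR1)--(HR3). Relation (HR1): $S_e^\ast S_f = p_e p_f = \delta_{ef} p_e = \delta_{ef} S_e$ since $(p_e)_{e \in V}$ are pairwise orthogonal projections by \eqref{bga::eq:partition_relation}. Relation (HR2): $S_e S_e^\ast = p_e$ and I need $p_e \leq \sum_{v \in s(e)} p_v = \sum_{v : \{v,e\} \in E} p_v$; this holds because $p_e = p_e \cdot 1 = p_e \sum_{v \in U} p_v$ and $p_e p_v = 0$ for $v \notin s(e)$ by \eqref{bga::eq:orthogonality_relation}, so $p_e = \sum_{v \in s(e)} p_e p_v$, and since $p_e$ commutes with this sum of mutually orthogonal projections contained in $1$, one gets $p_e \leq \sum_{v \in s(e)} p_v$. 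Relation (HR3): $p_v \leq \sum_{e : v \in s(e)} S_e S_e^\ast = \sum_{e : v \in s(e)} p_e$, by the symmetric argument using $\sum_{e \in V} p_e = 1$ and $p_v p_e = 0$ for $v \notin s(e)$. Hence the universal property of $C^\ast(\Ha\Gamma)$ gives $\Psi : C^\ast(\Ha\Gamma) \to C^\ast(G)$. Finally, $\Phi$ and $\Psi$ are mutually inverse because they are inverse on generators: $\Psi \circ \Phi$ fixes each $p_u$ ($u \in U$) and sends $p_e \mapsto p_e$ ($e \in V$), and $\Phi \circ \Psi$ fixes each $p_v$ and sends $s_e \mapsto s_e s_e^\ast = s_e$; since generators are fixed and both maps are $\ast$-homomorphisms, they are the respective identities.

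The main obstacle — really the only non-formal point — is the bookkeeping with the inequalities (HR2)/(HR3) and deducing the two partition-of-unity identities $\sum_e s_e = 1$ and orthogonality $p_v s_e = 0$ for $v \notin s(e)$ purely from $\leq$-relations; the trick throughout is that once a positive element $x$ satisfies $x \leq \sum_i q_i$ with the $q_i$ pairwise orthogonal projections (hence $\sum_i q_i$ itself a projection), multiplying by the complementary projection $1 - \sum_i q_i$ annihilates $x$, and conversely orthogonality to a projection $q$ is equivalent to $q x q = 0$ for $x \geq 0$. Everything else is a direct application of the universal properties and of Definition \ref{bipartite_graph_algebras:definition}.
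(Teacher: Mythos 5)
Your proposal is correct and takes essentially the same route as the paper: both directions are obtained by sending generators to generators (after noting that (HR1) forces each $s_e$ to be a projection, so $s_e$ corresponds to the vertex projection $p_e$, $e \in V$) and invoking the two universal properties; the paper merely declares the relation checks an easy exercise, which you carry out. The only point to tighten is your remark that $\sum_{v \in E^0} p_v = 1$ is ``assumed'': it is not literally among (HR1)--(HR3), but it holds because $\sum_{v \in E^0} p_v$ acts as a unit on every generator (for the $s_e$ use (HR2) and the $C^\ast$-identity to get $\left(\sum_{v \in E^0} p_v\right) s_e = s_e$), hence it is the unit of $C^\ast(\Ha\Gamma)$.
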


\begin{proof}
    Let $(p_v)_{v \in E^0}$ and $(s_e)_{e \in E^1}$ be the generators of $C^\ast(\Ha\Gamma)$ and let $(\hat{p}_x)_{x \in U \cup V}$ be the generators of $C^\ast(G)$ as in Definition \ref{bipartite_graph_algebras:definition}. Then we define a $\ast$-homomorphism $\varphi: C^\ast(G) \to C^\ast(\Ha\Gamma)$ by
    \begin{align*}
        \varphi(\hat{p}_x) &= p_x, & \text{if } x \in E^0, \\
        \varphi(\hat{p}_x) &= s_x, & \text{if } x \in E^1.
    \end{align*}
    It is an easy exercise to check that the relations \eqref{bga::eq:partition_relation} and \eqref{bga::eq:orthogonality_relation} are satisfied by the elements $\varphi(\hat{p}_x)$. Thus, the universal property of $C^\ast(G)$ yields that this map exists. Similarly, one defines the inverse map $\psi: C^\ast(\Ha\Gamma) \to C^\ast(G)$ using the universal property of $C^\ast(G)$.
\end{proof}

\subsection{Alternative generators of bipartite graph C*-algebras}

In the previous section, we introduced bipartite graph $C^\ast$-algebras $C^\ast(G)$ as universal $C^\ast$-algebras which are generated by projections associated to the vertices of a bipartite graph $G$. Interestingly, one can define $C^\ast(G)$ in a different way as universal $C^\ast$-algebra generated by contractions associated to the edges of $G$. This is the content of the following proposition.

\begin{proposition}
    \label{bga::proposition:alternative_definition}
    Let $G = (U, V, E)$ be a bipartite graph. Then $C^\ast(G)$ is the universal $C^\ast$-algebra generated by a family of elements $(x_e)_{e \in E}$ satisfying
    \begin{align}
        x_e^\ast x_f &= 0 & & \text{ if } e \cap f \cap U = \emptyset,  \tag{GC1} \label{bga::eq:gc1} \\
        x_e x_f^\ast &= 0 & & \text{ if } e \cap f \cap V = \emptyset,  \tag{GC2} \label{bga::eq:gc2} \\
        \left( \sum_{e \in E} x_e^\ast \right) x_f &= x_f,              \tag{GC3} \label{bga::eq:gc3} \\
        x_e \left( \sum_{f \in E} x_f^\ast \right) &= x_e,              \tag{GC4} \label{bga::eq:gc4}
    \end{align}
    for all edges $e, f \in E$, respectively. In particular, the $x_e$ are contractions\footnote{A contraction is an element $x$ with $\|x\| \leq 1$. The relation $x x^\ast x = x^2$ means that $x$ is a product of two projections, see e.g. \cite[Theorem 8]{radjavi_products_1969}.} which satisfy $x_e x_e^\ast x_e = x_e^2$.
\end{proposition}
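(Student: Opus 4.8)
The plan is to identify the universal C*-algebra $A$ generated by $(x_e)_{e\in E}$ subject to \eqref{bga::eq:gc1}--\eqref{bga::eq:gc4} with $C^\ast(G)$ by producing mutually inverse $\ast$-homomorphisms in both directions via the respective universal properties. One direction is the easy one: a routine check shows that the elements $p_up_v\in C^\ast(G)$, indexed by edges $e=\{u,v\}$ with $u\in U$, $v\in V$, satisfy \eqref{bga::eq:gc1}--\eqref{bga::eq:gc4} --- \eqref{bga::eq:gc1} and \eqref{bga::eq:gc2} from orthogonality of the two partitions of unity, and \eqref{bga::eq:gc3}, \eqref{bga::eq:gc4} from the identities $p_u\sum_{v\in\mathcal{N}(u)}p_v=p_u$ and $p_v\sum_{u\in\mathcal{N}(v)}p_u=p_v$, which follow at once from \eqref{bga::eq:partition_relation} and \eqref{bga::eq:orthogonality_relation}. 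This gives a $\ast$-homomorphism $\varphi\colon A\to C^\ast(G)$ with $\varphi(x_e)=p_up_v$.

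For the converse I would recover the vertex projections as ``row'' and ``column'' sums of the edge generators: set $r_u:=\sum_{v\in\mathcal{N}(u)}x_{\{u,v\}}$ for $u\in U$ and $r_v:=\sum_{u\in\mathcal{N}(v)}x_{\{u,v\}}^\ast$ for $v\in V$. The substance of the proof is to verify, using only \eqref{bga::eq:gc1}--\eqref{bga::eq:gc4}, that $(r_x)_{x\in U\cup V}$ is a $G$-projection family. The steps are: (i) using \eqref{bga::eq:gc1} to discard the vanishing terms, \eqref{bga::eq:gc3} collapses to $r_u^\ast x_{\{u,v_0\}}=x_{\{u,v_0\}}$ for every $u$ and $v_0\in\mathcal{N}(u)$; summing over $v_0$ gives $r_u^\ast r_u=r_u$, whence $r_u=r_u^\ast$ and $r_u^2=r_u$, so $r_u$ is a projection --- and symmetrically \eqref{bga::eq:gc2} together with \eqref{bga::eq:gc4} shows each $r_v$ is a projection; (ii) expanding $r_ur_v$ and killing, via \eqref{bga::eq:gc2}, every cross term whose two edges do not share a $V$-vertex leaves $r_ur_v=x_{\{u,v\}}$ when $\{u,v\}\in E$ and $r_ur_v=0$ otherwise, which is exactly \eqref{bga::eq:orthogonality_relation}; (iii) the element $S:=\sum_{e\in E}x_e$ equals $\sum_{u\in U}r_u$, and its adjoint equals $\sum_{v\in V}r_v$; from \eqref{bga::eq:gc3} one gets $S^\ast S=S$, so $S=S^\ast=S^2$ is a projection, and by \eqref{bga::eq:gc3} and \eqref{bga::eq:gc4} (passing to adjoints) $S$ acts as a two-sided identity on every $x_e$ and $x_e^\ast$, hence on all of $A$, so $S=1_A$; this gives \eqref{bga::eq:partition_relation}. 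The universal property of $C^\ast(G)$ then yields $\psi\colon C^\ast(G)\to A$ with $\psi(p_x)=r_x$.

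Finally I would check on generators that $\varphi$ and $\psi$ are mutually inverse: $\psi(\varphi(x_e))=\psi(p_u)\psi(p_v)=r_ur_v=x_e$ by (ii), while $\varphi(\psi(p_u))=\sum_{v\in\mathcal{N}(u)}p_up_v=p_u$ and $\varphi(\psi(p_v))=\sum_{u\in\mathcal{N}(v)}(p_up_v)^\ast=p_v$. The two closing assertions are then immediate in $C^\ast(G)$: $x_e=p_up_v$ is a product of two projections, so $\|x_e\|\le 1$ and $x_ex_e^\ast x_e=p_up_vp_vp_up_up_v=(p_up_v)^2=x_e^2$.

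The only genuinely delicate point, beyond the (short) bookkeeping with the four relations, is the well-definedness of $A$ itself and its unitality. For the former one must observe that step (ii) is valid in \emph{any} C*-algebra whose elements satisfy \eqref{bga::eq:gc1}--\eqref{bga::eq:gc4}, so $x_e=r_ur_v$ is always a product of two projections and hence $\|x_e\|\le 1$; this uniform bound makes the relations admissible and guarantees the universal C*-algebra exists. For the latter one needs step (iii), namely that $A$ is automatically unital with $1_A=\sum_{e\in E}x_e$, which is what makes the (a priori possibly non-unital) universal object match the unital algebra $C^\ast(G)$ and what lets the universal property of $C^\ast(G)$ apply to $(r_x)$. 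Once these are noted, the rest is a direct computation.
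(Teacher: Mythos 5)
Your proposal is correct and follows essentially the same route as the paper: both directions are given by the same maps (edge generators $x_e\mapsto p_up_v$, vertex projections recovered as sums of edge generators), verified to be mutually inverse on generators, with the unitality of $A$ and the contraction bound noted to justify existence of the universal algebra and the applicability of the universal property of $C^\ast(G)$. The only differences are cosmetic: you write the $V$-side projections with adjoints from the start and obtain contractivity from $x_e=r_ur_v$ being a product of two projections, whereas the paper writes $P_x=\sum_{e\ni x}x_e$, proves self-adjointness separately, and gets contractivity from $x_ex_e^\ast x_e=x_e^2$.
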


\begin{proof}
    First of all, conditions \eqref{bga::eq:gc1} and \eqref{bga::eq:gc2} together with \eqref{bga::eq:gc3} entail for every edge $e = \{u,v\} \in E$ that
    \begin{align*}
        x_e x_e^\ast x_e 
            = x_e \left( \sum_{f \in E} x_f^\ast \right) x_e
            = x_e^2,
    \end{align*}
    since in the second expression all terms for $f \neq e$ vanish. It follows in particular that the $x_e$ are contractions, and the universal $C^\ast$-algebra $A$ generated by elements $(x_e)_{e \in E}$ satisfying \eqref{bga::eq:gc1}--\eqref{bga::eq:gc4} exists. Further, \eqref{bga::eq:gc3} and \eqref{bga::eq:gc4} imply that $A$ is unital with unit $1 = \sum_{e \in E} x_e^\ast$.

    Let us use the universal property of $C^\ast(G)$ to find a $\ast$-homomorphism 
    $$
        \varphi: C^\ast(G) \to A
    $$ 
    with
    \begin{align*}
        \varphi(p_x) = \sum_{e \in E: x \in e} x_e =: P_x \quad \text{ for all } x \in U \cup V.
    \end{align*}
    First, one checks $P_x = P_x^\ast$. Indeed, assume $x \in U$ and apply \eqref{bga::eq:gc3} and \eqref{bga::eq:gc1} to obtain
    \begin{align*}
        P_x^\ast = \sum_{e \in E: x \in e} x_e^\ast 
            = \sum_{e \in E: x \in e} x_e^\ast \left( \sum_{f \in E} x_f \right) 
            &= \sum_{e \in E} x_e^\ast \left( \sum_{f \in E: x \in f} x_f \right) \\
            &= \sum_{f \in E: x \in f} \left(\sum_{e \in E} x_e^\ast\right) x_f 
            = P_x.
    \end{align*}
    In the second step, we use \eqref{bga::eq:gc3} after taking the involution on both sides.
    If $x \in V$, one can use \eqref{bga::eq:gc4} and \eqref{bga::eq:gc2} to obtain in a similar way
    \begin{align*}
        P_x^\ast = \sum_{e \in E: x \in e} x_e^\ast 
            = \sum_{e \in E: x \in e} \left( \sum_{f \in E} x_f \right) x_e^\ast 
            &= \sum_{e \in E} \left( \sum_{f \in E: x \in f} x_f \right) x_e^\ast \\
            &= \sum_{f \in E: x \in f} x_f \left( \sum_{e \in E} x_e^\ast \right) 
            = P_x.
    \end{align*}
    Next, let us prove $P_x^2 = P_x$. Again we first assume $x \in U$. Then \eqref{bga::eq:gc1} and \eqref{bga::eq:gc3} yield
    \begin{align*}
        P_x^2 = \left( \sum_{e \in E: x \in e} x_e^\ast \right) \left( \sum_{f \in E: x \in f} x_f \right)
            = \sum_{f \in E: x \in f} \left( \sum_{e \in E} x_e^\ast \right) x_f
            = P_x.
    \end{align*}
    If $x \in V$, then one can use \eqref{bga::eq:gc2} and \eqref{bga::eq:gc4} to obtain
    \begin{align*}
        P_x^2 = \left( \sum_{e \in E: x \in e} x_e \right) \left( \sum_{f \in E: x \in f} x_f^\ast \right)
            = \sum_{f \in E: x \in f} x_f \left( \sum_{e \in E} x_e^\ast \right)
            = P_x.
    \end{align*}
    Since
    \begin{align*}
        \sum_{u \in U} P_u &= \sum_{u \in U} \sum_{e \in E: u \in e} x_e^\ast = \sum_{e \in E} x_e^\ast = 1, \text{ and } \\
        \sum_{v \in V} P_v &= \sum_{v \in V} \sum_{e \in E: v \in e} x_e^\ast = \sum_{e \in E} x_e^\ast = 1,
    \end{align*}
    the projections $(P_x)_{x \in U \cup V}$ satisfy the relations \eqref{bga::eq:partition_relation}. It remains to check the orthogonality relations \eqref{bga::eq:orthogonality_relation}. For this, let $u \in U$ and $v \in V$ be two vertices such that $\{u, v\} \not \in E$. Then one has
    \begin{align*}
        P_v P_u &= \left( \sum_{e \in E: v \in e} x_e \right) \left( \sum_{g \in E: u \in g} x_g \right) \\
            &= \left( \sum_{e \in E: v \in e} x_e \right) \left( \sum_{f \in E} x_f^\ast \right) \left( \sum_{g \in E: u \in g} x_g \right)\\
            &= \sum_{e,f,g \in E: v \in f, u \in f} x_e x_f^\ast x_g \\
            &= 0,
    \end{align*}
    where the second-last equality follows from \eqref{bga::eq:gc1} and \eqref{bga::eq:gc2}, and the last equality follows from the fact that $u$ and $v$ are not connected by an edge in $G$.  This proves \eqref{bga::eq:orthogonality_relation}. Altogether, we showed that the elements $P_x \in A$ for $x \in U \cup V$ are projections satisfying the relations \eqref{bga::eq:partition_relation} and \eqref{bga::eq:orthogonality_relation}. Thus, the universal property of $C^\ast(G)$ yields the desired $\ast$-homomorphism $\varphi: C^\ast(G) \to A$.

    Conversely, one can use the universal property of $A$ to obtain an inverse $\ast$-homomorphism $\psi: A \to C^\ast(G)$ with
    \begin{align*}
        \psi(x_e) = p_u p_v =: X_e \quad \text{ for all } e = \{u, v\} \in E,
    \end{align*}
    where we implicitly require $u \in U$ and $v \in V$.
    We need to check that the elements $X_e \in C^\ast(G)$ satisfy the relations \eqref{bga::eq:gc1}--\eqref{bga::eq:gc4}. For \eqref{bga::eq:gc1} let $e = \{u_1, v_1\}$ and $f = \{u_2, v_2\}$ with $u_1, u_2 \in U$ and $v_1, v_2 \in V$ be two edges such that $u_1 \neq u_2$. Then one has
    \begin{align*}
        X_e^\ast X_f = p_{v_1} p_{u_1} p_{u_2} p_{v_2} = 0
    \end{align*}
    since $p_{u_1} p_{u_2} = 0$ by \eqref{bga::eq:partition_relation}. The relation \eqref{bga::eq:gc2} is checked in a similar way. For \eqref{bga::eq:gc3} let $f = \{u, v\} \in E$ and observe
    \begin{align*}
        \left( \sum_{e \in E} X_e^\ast \right) X_f 
            &= \left( \sum_{e \in E} p_{v_e} p_{u_e} \right) p_u p_v \\
            &= \left( \sum_{u^\prime \in U, v^\prime \in V} p_{v^\prime} p_{u^\prime} \right) p_u p_v \\
            &= \left( \sum_{v^\prime \in V} p_{v^\prime} \right) \left( \sum_{u^\prime \in U} p_{u^\prime} \right) p_u p_v \\
            &= p_u p_v \\
            &= X_f,
    \end{align*}
    where we write $e = \{u_e, v_e\}$ for all edges $e \in E$ with $u_e \in U$ and $v_e \in V$. In the second step, we use that by \eqref{bga::eq:orthogonality_relation} all superfluous terms in the sum vanish, and in the fourth step we use \eqref{bga::eq:partition_relation}. Relation \eqref{bga::eq:gc4} is checked analogously. 

    Finally, it remains to verify that $\psi$ and $\varphi$ are inverse to each other. For $e = \{u,v\} \in E$ observe
    \begin{align*}
        \varphi(\psi(x_e)) = \varphi(p_u p_v) 
            = \varphi(p_u) \varphi(p_v)  
            &= \left( \sum_{f \in E: u \in f} x_f^\ast \right) \left( \sum_{g \in E: v \in g} x_g \right) \\
            &= \left( \sum_{f \in E: u \in f} x_f^\ast \right) x_e
            = \left(\sum_{f \in E} x_f^\ast \right) x_e
            = x_e,
    \end{align*}
    where we use \eqref{bga::eq:gc1} in the forth and fifth step, and \eqref{bga::eq:gc3} in the last step. Further, one has for $u \in U$
    \begin{align*}
        \psi(\varphi(p_u)) = \psi\left( \sum_{e \in E: u \in e} x_e \right) 
            = \sum_{e  \in E: u \in e} p_u p_{v_e}
            = p_u \left( \sum_{v^\prime \in V} p_{v^\prime} \right) 
            = p_u,
    \end{align*}
    where we use \eqref{bga::eq:orthogonality_relation} in the second-last step and \eqref{bga::eq:partition_relation} in the last step.
    One checks $\psi(\varphi(p_v))=p_v$ for $v \in V$ in a similar way. This proves that $\varphi$ and $\psi$ are inverse to each other, and hence they yield the desired isomorphism $C^\ast(G) \cong A$.
\end{proof}

\section{Projections in generic position}
\label{sec::generic_position}

Let us recall again Halmos' classical result: If $P$ and $Q$ are two projections in generic position on a Hilbert space $\mathcal{H}$, then there are contractions $C$ and $S$ on some Hilbert space $\mathcal{K}$ with $C^2 + S^2 = I$ such that up to unitary equivalence one has
\begin{align*}
    P = \begin{pmatrix}
        I & 0 \\
        0 & 0
    \end{pmatrix},
    \quad
    Q = \begin{pmatrix} 
        C^2 & CS \\
        CS & S^2
    \end{pmatrix}.
\end{align*}
Following earlier work of Vasilevski \cite{vasilevski_c-algebras_1998}, we generalize this result to a $G$-projection family for an arbitrary bipartite graph $G$. Thus, we specify what a $G$-projection family in generic position is, and we write such projections in a canonical block matrix form where the entries of the block are contractions satisfying certain relations.

Throughout this section let $G = (U, V, E)$ be a connected bipartite graph and let $(P_x)_{x \in U \cup V}$ be a $G$-projection-family on some Hilbert space $\mathcal{H}$ as defined in Definition \ref{bipartite_graph_algebras:definition}.
Further, set 
\begin{align*}
    L_x := \mathrm{im}(P_x) \subset \mathcal{H}
\end{align*}
for all $x \in U \cup V$.

\begin{definition}
    \label{genpos::def::generic_position}
    Following Halmos and Vasilevski \cite{halmos_two_1969, vasilevski_algebra_1981}, we say that the $P_x$ are in \emph{generic position} if
    \begin{align*}
        L_u \cap L_v^\perp = \{0\} = L_u^\perp \cap L_v
    \end{align*}
    holds for all edges $\{u, v\} \in E$.
\end{definition}

\begin{lemma} 
    \label{genpos::lemma::Lx_are_isomorphic}
    Assume that the $P_x$ are in generic position. Then one has 
    \begin{align*}
        L_x \cong L_y
    \end{align*}
    for all $x, y \in U \cup V$.
\end{lemma}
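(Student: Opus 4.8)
The plan is to reduce the statement to a single edge and then propagate the isomorphism along paths, using connectedness of $G$.

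First I would unpack what generic position means for a fixed edge $\{u,v\} \in E$. Since $L_u$ and $L_v$ are ranges of projections, they are closed, hence Hilbert spaces in their own right. Consider the bounded operator $T \colon L_u \to L_v$ defined by $T\xi = P_v\xi$ (which lands in $L_v = \mathrm{im}(P_v)$). Its kernel is $L_u \cap \ker(P_v) = L_u \cap L_v^\perp = \{0\}$ by the first half of the generic position hypothesis. For the range, a vector $\eta \in L_v$ is orthogonal to $T(L_u)$ if and only if $\langle P_v\xi, \eta\rangle = \langle \xi, \eta\rangle = 0$ for all $\xi \in L_u$ (using $\eta \in L_v$, so $P_v\eta = \eta$), i.e. if and only if $\eta \in L_v \cap L_u^\perp = \{0\}$ by the second half of the hypothesis. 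Hence $T$ is injective with dense range.

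Next I would invoke the polar decomposition $T = W\,|T|$, where $|T| = (T^\ast T)^{1/2}$ is a positive operator on $L_u$ and $W \colon L_u \to L_v$ is a partial isometry with initial space $\overline{\mathrm{im}(|T|)} = (\ker T)^\perp$ and final space $\overline{\mathrm{im}(T)}$. Because $\ker|T| = \ker T = \{0\}$, the initial space of $W$ is all of $L_u$; because $T$ has dense range, the final space of $W$ is all of $L_v$. Thus $W$ is a surjective isometry, that is, a unitary $L_u \to L_v$, so $L_u \cong L_v$.

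Finally, connectedness of $G$ closes the argument: given $x, y \in U \cup V$, pick a path $x = z_0, z_1, \dots, z_n = y$ in $G$; each $\{z_{i-1}, z_i\}$ is an edge, so the previous step yields unitaries $L_{z_{i-1}} \cong L_{z_i}$, and composing them gives $L_x \cong L_y$. The only point requiring care is that $T = P_v|_{L_u}$ need not be bounded below (it is the analogue of the contraction $C$ in Halmos' theorem, whose spectrum may accumulate at $0$), so one genuinely needs polar decomposition rather than a naive inverse; everything else is routine bookkeeping.
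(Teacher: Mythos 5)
Your proof is correct and follows essentially the same route as the paper: reduce to a single edge by connectedness, then show that the restriction of one projection to the range of the other (the paper uses $P_u|_{L_v}$, you use the symmetric $P_v|_{L_u}$) is injective with dense range, using exactly the two halves of the generic position condition. The only difference is that you make explicit, via polar decomposition, the final step from ``injective with dense range'' to ``unitarily isomorphic,'' which the paper leaves implicit.
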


\begin{proof}
    We borrow the proof from \cite[Theorem 1]{halmos_two_1969}.
    As $G$ is connected it suffices to show that claim for $\{x, y\} = \{u, v\} \in E$. So let $u \in U$ and $v \in V$ be two adjacent vertices. We claim that the operator $P_u |_{L_v} : L_v \to L_u$ is injective with dense range. Indeed, for any $f \in L_v$ one has 
    \begin{align*}
        P_u f = 0
        &\implies f \in L_v \cap L_u^\perp  \\
        &\implies f = 0,
    \end{align*}
    which proves injectivity. Next, let $g \in L_u$ and assume that $g \perp P_u f$ holds for all $f \in L_v$. Then one has for all $f \in L_v$
    \begin{align*}
        0 = \langle g, P_u f \rangle = \langle P_u g, f \rangle = \langle g, f \rangle,
    \end{align*}
    and hence we see $g \in L_u \cap L_v^\perp$ which implies $g = 0$. Thus, the operator $P_u |_{L_v}$ has dense range in $L_u$. Consequently, the spaces $L_u$ and $L_v$ are isometrically isomorphic.
\end{proof}

\begin{thm} 
    \label{genpos::thm:projections_in_generic_position}
    Let $(P_x)_{x \in U \cup V}$ be a $G$-projection family on a Hilbert space $\mathcal{H}$ in generic position where $G$ is connected. Then there are operators $(C_{uv})_{u \in U, v \in V}$ on a Hilbert space $\mathcal{K}$ such that up to unitary equivalence one has
    \begin{align}
        \begin{aligned}
        P_u &= (C_{u v_1}^\ast C_{u v_2})_{v_1, v_2 \in V} \in M_{V}(\mathcal{B}(\mathcal{K})), \\
        P_v &= (\delta_{v_1 v} \delta_{v_2 v})_{v_1, v_2 \in V} \in M_{V}(\mathcal{B}(\mathcal{K})), 
        \end{aligned} \label{genpos::eq:P_in_terms_of_C}
    \end{align}
    for all $u \in U$ and $v \in V$, where $\delta_{v_1 v} = I \in B(\mathcal{K})$ if $v_1 = v$ and $\delta_{v_1 v} = 0$ otherwise, and $M_{V}(\mathcal{B}(\mathcal{K}))$ is the algebra of square matrices indexed by $V$ with entries from the bounded operators on $\mathcal{K}$.
    
    The operator $C_{uv}$ vanishes if $\{u, v\} \not \in E$, and is injective with dense range otherwise. Moreover, the operators $(C_{uv})_{u \in U, v \in V}$ satisfy
    \begin{align}
        \sum_{u \in U} C_{u v_1}^\ast C_{u v_2} &= \delta_{v_1 v_2}, \label{bga:generic_position:G-contraction_family_condition_1} \\
        \sum_{v \in V} C_{u_1 v} C_{u_2 v}^\ast &= \delta_{u_1 u_2}. \label{bga:generic_position:G-contraction_family_condition_2}
    \end{align}

    Conversely, every family of operators $(C_{uv})_{u \in U, v \in V}$ with the above properties gives rise to a $G$-projection family $(P_x)_{x \in U \cup V}$ via the formula \eqref{genpos::eq:P_in_terms_of_C}.
\end{thm}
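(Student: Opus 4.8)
The plan is to follow the classical Halmos argument, adapted to the graph setting. The starting point is Lemma \ref{genpos::lemma::Lx_are_isomorphic}: since $G$ is connected and all the $L_x$ are mutually isomorphic, I fix one distinguished vertex, say $v_0 \in V$, and set $\mathcal{K} := L_{v_0}$. For every other $v \in V$ I choose a fixed unitary $W_v : L_v \to \mathcal{K}$ (taking $W_{v_0} = \mathrm{id}$). Because the $P_v$ are pairwise orthogonal projections summing to $1$, the Hilbert space decomposes as $\mathcal{H} = \bigoplus_{v \in V} L_v$, and the collection of the $W_v$ assembles into a unitary $W : \mathcal{H} \to \bigoplus_{v \in V} \mathcal{K} \cong \mathcal{K} \otimes \ell^2(V)$, i.e. $\mathcal{K}^{\oplus V}$. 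In this identification, $WP_vW^\ast$ is exactly the matrix unit $(\delta_{v_1 v}\delta_{v_2 v})_{v_1,v_2}$, which gives the second formula in \eqref{genpos::eq:P_in_terms_of_C} for free. All the content is in computing $WP_uW^\ast$.

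Next I analyze $P_u$ in this decomposition. Write $P_u$ as a block matrix $(A_{v_1 v_2})_{v_1, v_2 \in V}$ with $A_{v_1 v_2} \in \mathcal{B}(\mathcal{K})$, where $A_{v_1 v_2} = W_{v_1} P_{v_1} P_u P_{v_2} W_{v_2}^\ast$ essentially. Since $P_u$ is a projection, $P_u = P_u^\ast = P_u^2$. I want to exhibit each block in the factored form $C_{uv_1}^\ast C_{uv_2}$. The natural candidate is to define, for each $u \in U$ and $v \in V$, an operator $C_{uv} : \mathcal{K} \to \mathcal{K}$ built from the compression $P_u|_{L_v}$ composed with the unitaries; concretely $C_{uv} := (\text{some unitary from } L_u\text{-side to }\mathcal{K}) \circ P_u|_{L_v} \circ W_v^\ast$. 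To make this work I also need to fix an identification of $L_u$ with $\mathcal{K}$; but here is the subtlety — one cannot choose these identifications independently for each $u$, since the blocks of $P_u$ across different $v_1, v_2$ must be consistent. The clean way is: $P_u$ restricted to its own range $L_u$ is the identity, and $L_u \subset \bigoplus_v L_v$ sits as the graph of a family of operators; writing the orthogonal projection onto a subspace $L_u \cong \mathcal{K}$ inside $\mathcal{K}^{\oplus V}$ forces it to have the form $(C_{uv_1}^\ast C_{uv_2})$ where the row $(C_{uv})_v : \mathcal{K}^{\oplus V} \to \mathcal{K}$ has $\sum_v C_{uv}C_{uv}^\ast = I$ (so that it is a coisometry onto a copy of $\mathcal{K}$) — this is precisely relation \eqref{bga:generic_position:G-contraction_family_condition_2} summed over $V$ for fixed $u$, i.e. the "row" normalization. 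The idempotent/selfadjoint conditions on $P_u$ then reduce exactly to this coisometry condition.

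The remaining relations come out as follows. Relation \eqref{bga:generic_position:G-contraction_family_condition_1}, $\sum_{u\in U} C_{uv_1}^\ast C_{uv_2} = \delta_{v_1 v_2}$, is simply $\sum_{u\in U} P_u = 1$ read off blockwise in the decomposition $\mathcal{K}^{\oplus V}$. The orthogonality $P_uP_v = 0$ for $\{u,v\}\notin E$ translates, upon looking at the appropriate block, into $C_{uv} = 0$ when $\{u,v\}\notin E$. That $C_{uv}$ is injective with dense range when $\{u,v\}\in E$ is exactly the statement proved inside Lemma \ref{genpos::lemma::Lx_are_isomorphic} (the map $P_u|_{L_v}$ is injective with dense range), transported through the fixed unitaries. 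For the converse direction, given operators $(C_{uv})$ satisfying \eqref{bga:generic_position:G-contraction_family_condition_1} and \eqref{bga:generic_position:G-contraction_family_condition_2}, I define $P_u$ and $P_v$ by the formulas \eqref{genpos::eq:P_in_terms_of_C} and check directly: $P_v$ is obviously a projection and $\sum_v P_v = 1$; $P_u = P_u^\ast$ is visible from the form of the entries; $P_u^2 = P_u$ is a one-line computation using \eqref{bga:generic_position:G-contraction_family_condition_2}; $\sum_u P_u = 1$ is \eqref{bga:generic_position:G-contraction_family_condition_1}; and $P_uP_v = 0$ for $\{u,v\}\notin E$ follows from $C_{uv}=0$ in that case.

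The main obstacle I anticipate is bookkeeping the two families of identifications coherently: the unitaries $W_v: L_v \to \mathcal{K}$ can be chosen freely, but the "$u$-side" identification is not free and must be defined so that the row operator $(C_{uv})_{v}$ is literally a coisometry onto $\mathcal{K}$; getting the order of composition and the adjoints right so that the block appears as $C_{uv_1}^\ast C_{uv_2}$ rather than $C_{uv_1}C_{uv_2}^\ast$ is the delicate point. Everything else — the projection identities, the translation of \eqref{bga::eq:partition_relation} and \eqref{bga::eq:orthogonality_relation}, and the injectivity/dense-range claim — is either immediate or already contained in Lemma \ref{genpos::lemma::Lx_are_isomorphic}.
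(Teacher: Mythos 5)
Your proposal is correct and takes essentially the same route as the paper: decompose $\mathcal{H}=\bigoplus_{v\in V}L_v$, use Lemma \ref{genpos::lemma::Lx_are_isomorphic} to transport everything to one Hilbert space $\mathcal{K}$, take $C_{uv}$ to be the transported compression $P_uP_v$, read off \eqref{bga:generic_position:G-contraction_family_condition_1} and \eqref{bga:generic_position:G-contraction_family_condition_2} from \eqref{bga::eq:partition_relation}, get vanishing respectively injectivity/dense range from \eqref{bga::eq:orthogonality_relation} and the lemma's proof, and verify the converse directly; your worry about coordinating the $L_u$-side identifications is in fact unnecessary, since the paper chooses arbitrary isometric isomorphisms $U_u\colon L_u\to\mathcal{K}$ and the choice cancels in $C_{uv_1}^\ast C_{uv_2}=U_{v_1}P_{v_1}P_u\,U_u^\ast U_u\,P_uP_{v_2}U_{v_2}^\ast$ (which also settles your adjoint-ordering concern and gives the $u_1\neq u_2$ case of \eqref{bga:generic_position:G-contraction_family_condition_2} from $P_{u_1}P_{u_2}=0$). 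The only extra content in the paper's converse is a verification that the resulting family is again in generic position, which you omit but which the literal statement does not demand.
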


\begin{proof}
    Assume that $(P_x)_{x \in U \cup V}$ is a $G$-projection family on the Hilbert space $\mathcal{H}$. With respect to the decomposition $\mathcal{H} = \bigoplus_{v \in V} L_v$ we can write the $P_x$ in block matrix form as 
    \begin{align*}
        P_u = (P_{v_1} P_u P_{v_2})_{v_1, v_2 \in V}, 
        \quad \text{and} \quad
        P_v = (\delta_{v_1 v} \delta_{v_2 v})_{v_1, v_2 \in V}.
    \end{align*}
    By Lemma \ref{genpos::lemma::Lx_are_isomorphic} there is a Hilbert space $\mathcal{K}$ and a family of isometric isomorphisms $U_x: L_x \to \mathcal{K}$ with $x \in U \cup V$. Let $\mathcal{U} := \mathrm{diag}((U_v)_{v \in V}) \in M_V(B(\mathcal{K}))$ and observe
    \begin{align*}
        \mathcal{U} P_u \mathcal{U}^\ast = (U_{v_1} P_{v_1} P_u U_u^\ast U_u P_u P_{v_2} U_{v_2}^\ast)_{v_1, v_2 \in V}, 
        \quad \text{and} \quad
        \mathcal{U} P_v \mathcal{U}^\ast = (\delta_{v_1 v} \delta_{v_2 v})_{v_1, v_2 \in V}.
    \end{align*}
    Setting
    \begin{align*}
        C_{uv} := U_u P_u P_v U_v^\ast \in B(\mathcal{K})
    \end{align*}
    for all $u \in U$ and $v \in V$ we immediately obtain \eqref{genpos::eq:P_in_terms_of_C}.

    Let us prove that the operators $C_{uv}$ have the desired properties. If $\{u,v\} \not \in E$ is not an edge then \eqref{bga::eq:orthogonality_relation} entails $P_u P_v = 0$ and thus $C_{uv} = 0$. Otherwise, $C_{uv}$ is injective with dense range by the proof of Lemma \ref{genpos::lemma::Lx_are_isomorphic}. Moreover, for every $u_1, u_2 \in U$ and $v_1, v_2 \in V$ one easily checks
    \begin{align*}
        \sum_{u \in U} C_{u v_1}^\ast C_{u v_2}
            &= \sum_{u \in U} U_{v_1} P_{v_1} P_u U_u^\ast U_{u} P_u P_{v_2} U_{v_2}^\ast \\
            &= \sum_{u \in U} U_{v_1} P_{v_1} P_u P_{v_2} U_{v_2}^\ast \\
            &= U_{v_1} P_{v_1} P_{v_2} U_{v_2}^\ast \\
            &= \delta_{v_1 v_2},
    \end{align*}
    using \eqref{bga::eq:partition_relation}, and 
    \begin{align*}
        \sum_{v \in V} C_{u_1 v} C_{u_2 v}^\ast 
            &= \sum_{v \in V} U_{u_1} P_{u_1} P_{v} U_{v}^\ast U_{v} P_v P_{u_2} U_{u_2}^\ast \\
            &= U_{u_1} P_{u_1} P_{u_2} U_{u_2}^\ast \\
            &= \delta_{u_1 u_2}
    \end{align*}
    using \eqref{bga::eq:partition_relation} again.

    In the other direction, assume that $(C_{uv})_{u \in U, v \in V}$ is a family of operators on some Hilbert space $\mathcal{K}$ satisfying the conditions from the statement. Let operators $P_x$ on the Hilbert space $\mathcal{K}^V$ be defined via \eqref{genpos::eq:P_in_terms_of_C}. We need to show that the $P_x$ form a $G$-projection family. Evidently, the $P_v$ with $v \in V$ are pairwise orthogonal projections adding up to the unit. On the other hand, the $P_u$ with $u \in U$ are clearly selfadjoint, and we have for every $v_1, v_2 \in V$ by (\ref{bga:generic_position:G-contraction_family_condition_2})
    \begin{align*}
        [P_{u_1} P_{u_2}]_{v_1 v_2} &= \sum_{v_3 \in V} C_{u_1 v_1}^\ast C_{u_1 v_3} C_{u_2 v_3}^\ast C_{u_2 v_2} \\
            &= C_{u_1 v_1}^\ast \left( \sum_{v_3 \in V} C_{u_1 v_3} C_{u_2 v_3}^\ast \right) C_{u_2 v_2} \\
            &= \delta_{u_1 u_2} C_{u_1 v_1}^\ast C_{u_2 v_2} \\
            &= \delta_{u_1 u_2} [P_{u_1}]_{v_1 v_2}.
    \end{align*}
    Further, using (\ref{bga:generic_position:G-contraction_family_condition_1}) one easily obtains $\sum_{u \in U} P_u = I \in M_V(B(\mathcal{K}))$. Hence, the $P_u$ with $u \in U$ form a partition of unity as well. Finally, assume that $\{u, v \}$ with $u \in U$ and $v \in V$ is not an edge in $G$. Then it is not hard to check that every entry of $P_u$ in the $v$-column or $v$-row is equal to zero, which entails $P_u \cap P_v$. Therefore, the $P_x$ form a $G$-projection family. 

    It only remains to show that the $P_x$ are in generic position. Let $\{u,v\} \in E$ be an edge. First, we show $L_u^\perp \cap L_v = \{0\}$. Assume $g = (g_{v_1})_{v_1 \in V} \in L_u^\perp \cap L_v \subset \mathcal{K}^V$. Then $g_{v_1} = 0$ for all $v_1 \neq v$ as $g \in L_v$. Thus, one checks
    \begin{align*}
        P_u g = \left( C_{u v_1}^\ast C_{u v} g_v \right)_{v_1 \in V} 
    \end{align*}
    which vanishes because of $g \in L_u^\perp$. In particular, it follows
    \begin{align*}
        \sum_{v_1 \in V} C_{u v_1} C_{u v_1}^\ast C_{u v} g_v = C_{uv}^2 g_v = 0.
    \end{align*}
    Since $C_{uv}$ is injective, we conclude $g_v = 0$ and hence $g = 0$.

    It remains to show $L_u \cap L_v^\perp = \{0\}$. Let $g = (g_{v_1}) \in L_u \cap L_v^\perp \subset \mathcal{K}^V$. Then one has $g_v = 0$ as $g \in L_v^\perp$. Further, since $g \in L_u$ we have $P_u g = g$, i.e.
    \begin{align*}
        0 = (P_u g)_v = \sum_{v_1 \in V} C_{u v}^\ast C_{u v_1} g_{v_1} = C_{u v}^\ast \left( \sum_{v_1 \in V} C_{u v_1} g_{v_1} \right).
    \end{align*}
    Consequently, for $f := \sum_{v_1 \in V} C_{u v_1} g_{v_1}$ we have $C_{uv}^\ast f = 0$. As $C_{uv}$ has dense range it follows $f = 0$. Thus, we obtain for every $v_2 \in V$
    \begin{align*}
        0 = C_{u v_2}^\ast f = \sum_{v_1 \in V} C_{u v_2}^\ast C_{u v_1} g_{v_1} = (P_u g)_{v_2} = g_{v_2}.
    \end{align*}
    We conclude $g = 0$ as desired.
\end{proof}

\section{One- and two-dimensional representations}
\label{sec::one_and_two_dimensional_representations}

Recall that the elements of the spectrum $\mathrm{Spec}(A)$ of a $C^\ast$-algebra $A$ are equivalence classes of irreducible representations of $A$ with respect to unitary equivalence. Every irreducible representation $\pi$ of $A$ induces a primitive ideal $I_\pi = \ker(\pi)$, and the set of all primitive ideals $\mathrm{Prim}(A)$ is equipped with the hull-kernel topology, i.e. the topology is given by setting the closure of a set $\mathcal{J} \subset \mathrm{Prim}(A)$ to be
\begin{align*}
    \overline{\mathcal{J}} = \left\{I \in \mathrm{Prim}(A) \mid I \supset \bigcap_{J \in \mathcal{J}} J \right\}.
\end{align*}
The topology of $\mathrm{Spec}(A)$ is then the coarsest topology that makes the map $\mathrm{Spec}(A) \to \mathrm{Prim}(A), \, \pi \mapsto I_\pi$ continuous. See \cite[Section II.6.5]{blackadar_operator_2006} for more on the spectrum of a $C^\ast$-algebra.

We say that a representation $\pi$ of $A$ is $n$-dimensional if it is a map $\pi: A \to M_n \cong B(\C^n)$. The following subspace of $\mathrm{Spec}(A)$ is of special interest to us.

\begin{definition}
    For a $C^\ast$-algebra $A$ we set
    \begin{align*}
        \mathrm{Spec}_{\leq 2}(A) := \{ \pi \in \mathrm{Spec}(A) \mid \pi \text{ is one- or two-dimensional}\}.
    \end{align*}
    This is a topological space with the subspace topology inherited from $\mathrm{Spec}(A)$.
\end{definition}

\subsection{The complete graph \texorpdfstring{$K_{2,2}$}{K22}}

To prepare a description of $\mathrm{Spec}_{\leq 2}(C^\ast(G))$ for arbitrary bipartite graphs, let us analyze the algebra $C^\ast(K_{2,2})$ of the complete bipartite graph $K_{2,2}$ in detail. We use the labels from Figure \ref{reps::fig:K22}.
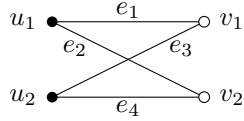
\begin{figure}[hbt]
        \begin{tikzpicture}
                \draw (-1,2) -- (1, 2);
                \draw (-1,1) -- (1, 1);
                \draw (-1,2) -- (1, 1);
                \draw (-1,1) -- (1, 2);
                \node[label=90:$e_1$] at (0, 1.8) {};
                \node[label=-90:$e_4$] at (0, 1.2) {};
                \node[label=90:$e_2$] at (-.7, 1.3) {};
                \node[label=90:$e_3$] at (.7, 1.3) {};
                \node[label=180:$u_1$, fill=black, circle, inner sep=.05cm] at (-1, 2) {};
                \node[label=180:$u_2$, fill=black, circle, inner sep=.05cm] at (-1, 1) {};
                \node[label=0:$v_1$, fill=white, draw=black, circle, inner sep=.05cm] at (1, 2) {};
                \node[label=0:$v_2$, fill=white, draw=black, circle, inner sep=.05cm] at (1, 1) {};
            \end{tikzpicture}
    \caption{The complete bipartite graph $K_{2,2}$}
    \label{reps::fig:K22}
\end{figure}
In fact, $C^\ast(K_{2,2})$ is isomorphic to the well-known universal C*-algebra $C^\ast(p,q)$ generated by two projections $p$ and $q$. This was already mentioned in Example \ref{bga::example:C_nm_as_bipartite_graph_algebra}, but we repeat this observation here with an explicit isomorphism.

\begin{proposition}
    \label{reps::prop:C*K22_is_C*pq}
    It is $C^\ast(K_{2,2}) \cong C^\ast(p,q)$. An isomorphism is given by the assignment
    \begin{align*}
        p_{u_1} &\mapsto p, &p_{v_1} &\mapsto q \\
        p_{u_2} &\mapsto 1-p, &p_{v_2} &\mapsto 1-q.
    \end{align*}
\end{proposition}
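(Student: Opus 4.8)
The plan is to verify directly that the proposed assignment extends to a $\ast$-isomorphism by invoking the universal properties of both algebras. First I would check that the images $p, 1-p, q, 1-q \in C^\ast(p,q)$ form a $K_{2,2}$-projection family: each is a projection, $p + (1-p) = 1 = q + (1-q)$ gives relation \eqref{bga::eq:partition_relation}, and relation \eqref{bga::eq:orthogonality_relation} is vacuous since $K_{2,2}$ is complete (there are no non-edges between $U = \{u_1, u_2\}$ and $V = \{v_1, v_2\}$). Hence the universal property of $C^\ast(K_{2,2})$ from Definition \ref{bipartite_graph_algebras:definition} yields a $\ast$-homomorphism $\Phi: C^\ast(K_{2,2}) \to C^\ast(p,q)$ sending $p_{u_1} \mapsto p$, $p_{u_2} \mapsto 1-p$, $p_{v_1} \mapsto q$, $p_{v_2} \mapsto 1-q$.

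For the reverse direction I would set $P := p_{u_1}$ and $Q := p_{v_1}$ in $C^\ast(K_{2,2})$; these are projections, so they satisfy the two defining relations of $C^\ast(p,q)$ (which are simply ``$p = p^\ast = p^2$'' and ``$q = q^\ast = q^2$'' — there are no further relations since $C^\ast(p,q) = C^\ast(\Z_2 \ast \Z_2)$ is the full free product). The universal property of $C^\ast(p,q)$ then produces a $\ast$-homomorphism $\Psi: C^\ast(p,q) \to C^\ast(K_{2,2})$ with $\Psi(p) = p_{u_1}$ and $\Psi(q) = p_{v_1}$. Finally I would check $\Phi \circ \Psi = \mathrm{id}$ and $\Psi \circ \Phi = \mathrm{id}$ on generators: the first is immediate since $\Phi(\Psi(p)) = \Phi(p_{u_1}) = p$ and similarly for $q$; the second uses in addition that $\Psi(\Phi(p_{u_2})) = \Psi(1-p) = 1 - p_{u_1} = p_{u_2}$, where the last equality is relation \eqref{bga::eq:partition_relation} for $U$, and analogously $\Psi(\Phi(p_{v_2})) = 1 - p_{v_1} = p_{v_2}$. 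Since both composites fix a generating set, they are the respective identities, so $\Phi$ is the desired isomorphism.

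There is essentially no obstacle here — the proposition is a bookkeeping exercise in universal properties, already asserted in Example \ref{bga::example:C_nm_as_bipartite_graph_algebra} — so I would keep the write-up short. The only point that deserves an explicit sentence is that relation \eqref{bga::eq:orthogonality_relation} imposes nothing on $C^\ast(K_{2,2})$ because $K_{2,2}$ has no missing edges, which is precisely why one recovers the \emph{free} product $C^\ast(\Z_2 \ast \Z_2)$ rather than some proper quotient of it.
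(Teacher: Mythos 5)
Your proposal is correct and follows essentially the same route as the paper: both directions come from the respective universal properties (with the observation that \eqref{bga::eq:orthogonality_relation} is vacuous for the complete graph $K_{2,2}$), and the composites are checked on generators using \eqref{bga::eq:partition_relation}. The paper's proof is just a terser version of yours, leaving the inverse-composition check implicit.
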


\begin{proof}
    Evidently, $\{p,1-p\}$ and $\{q, 1-q\}$ are two partitions of unity. 
    Since $K_{2,2}$ is complete, there are no particular orthogonality relations that the generators of $C^\ast(K_{2,2})$ must satisfy, i.e. \eqref{bga::eq:orthogonality_relation} is trivial. 
    Thus, the universal property of $C^\ast(K_{2,2})$ yields a $\ast$-homomorphism $\varphi: C^\ast(K_{2,2}) \to C^\ast(p,q)$ which extends the assignment from the statement of the proposition. 
    Analogously, the universal property of $C^\ast(p,q)$ yields an inverse $\ast$-homomorphism $\psi: C^\ast(p,q) \to C^\ast(K_{2,2})$ with 
    $$
    \psi: p \mapsto p_{u_1}, \; q \mapsto p_{v_1}.
    $$
\end{proof}

As discussed in the introduction, the algebra $C^\ast(p,q)$ is well-known in the literature. The following explicit description is essentially due to Pedersen \cite{pedersen_measure_1968}. Further proofs can be found e.g. in \cite{power_hankel_1982, roch_algebras_1988,raeburn_c-algebra_1989}, see also \cite{bottcher_gentle_2010}.

\begin{thm}[Pedersen 1968]
    \label{reps::thm:C*pq_as_continuous_functions}
    The universal C*-algebra $C^\ast(p,q)$ generated by two projections $p$ and $q$ is isomorphic to the algebra 
    \begin{align*}
        A := \{f \in C([0,1], M_2) \mid f(0), f(1) \text{ are diagonal} \}
    \end{align*}
    of continuous functions from $[0,1]$ to $M_2$ that assume diagonal matrices at the endpoints $0$ and $1$. An isomorphism is given by the assignment
    \begin{align*}
        p \mapsto \begin{pmatrix}
            1 & 0 \\
            0 & 0
        \end{pmatrix}_{t \in [0,1]}, 
        \quad
        q \mapsto \begin{pmatrix}
            t & \sqrt{t(1-t)} \\
            \sqrt{t(1-t)} & 1-t
        \end{pmatrix}_{t \in [0,1]}.
    \end{align*}
\end{thm}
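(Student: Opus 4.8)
\emph{Plan.} I will construct $\varphi$ from the universal property of $C^\ast(p,q)$, verify surjectivity by a short computation inside $A$, and prove injectivity by classifying the irreducible representations of $C^\ast(p,q)$ and checking that each of them factors through $\varphi$.

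\emph{Existence and surjectivity.} For $t\in[0,1]$ set $P_0:=\left(\begin{smallmatrix}1&0\\0&0\end{smallmatrix}\right)$ and $Q(t):=\left(\begin{smallmatrix}t&\sqrt{t(1-t)}\\\sqrt{t(1-t)}&1-t\end{smallmatrix}\right)$. These are projections in $M_2$, depend continuously on $t$, and are diagonal at $t\in\{0,1\}$, so the constant function $P_0$ and the function $t\mapsto Q(t)$ are projections in $A$; the universal property yields a $\ast$-homomorphism $\varphi\colon C^\ast(p,q)\to A$ with $\varphi(p)=P_0$ and $\varphi(q)=Q(\cdot)$. Its image is a C*-subalgebra of $A$, hence norm-closed. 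One computes $\varphi(pqp)(t)=\left(\begin{smallmatrix}t&0\\0&0\end{smallmatrix}\right)$ and $\varphi\big((1-p)(1-q)(1-p)\big)(t)=\left(\begin{smallmatrix}0&0\\0&1-t\end{smallmatrix}\right)$, so by the Stone--Weierstrass theorem the image contains every diagonal function $\mathrm{diag}(f,g)$ with $f,g\in C([0,1])$. Moreover $\varphi(pq-pqp)(t)=\left(\begin{smallmatrix}0&\sqrt{t(1-t)}\\0&0\end{smallmatrix}\right)$; multiplying on the left by $\mathrm{diag}(f,0)$ and using that the closure of the ideal $\sqrt{t(1-t)}\,C([0,1])$ in $C([0,1])$ is $\{h:h(0)=h(1)=0\}$ (its hull is $\{0,1\}$), the image contains every function vanishing at the endpoints in the $(1,2)$-entry; adjoints handle the $(2,1)$-entry. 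Together with the diagonal functions this exhausts $A$, so $\varphi$ is onto.

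\emph{Injectivity.} By Gelfand--Naimark it suffices to show that every irreducible representation $\pi$ of $C^\ast(p,q)$ factors through $\varphi$. The representations that do so are: the four characters $(p,q)\mapsto(\varepsilon_1,\varepsilon_2)$ with $\varepsilon_i\in\{0,1\}$ (each of the form $f\mapsto f(0)_{ii}$ or $f\mapsto f(1)_{ii}$ composed with $\varphi$), and $\pi_t:=(f\mapsto f(t))\circ\varphi$ for $t\in(0,1)$. So let $\pi$ be irreducible on $H$ and put $P=\pi(p)$, $Q=\pi(q)$. The pairwise orthogonal subspaces $L_P\cap L_Q$, $L_P\cap L_Q^\perp$, $L_P^\perp\cap L_Q$, $L_P^\perp\cap L_Q^\perp$ together with their common orthocomplement all reduce $C^\ast(P,Q)=\pi(C^\ast(p,q))$, so by irreducibility $H$ equals exactly one of them. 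If $H$ is one of the first four, then $P$ and $Q$ are scalar projections, forcing $\dim H=1$ and $\pi$ equal to a character. Otherwise $P,Q$ are in generic position in the sense of Definition \ref{genpos::def::generic_position} (this is Halmos' two-projection theorem \cite{halmos_two_1969}; equivalently, this summand is a connected $K_{2,2}$-projection family in generic position, so Theorem \ref{genpos::thm:projections_in_generic_position} applies), giving a Hilbert space $\mathcal{K}$ and commuting positive operators $C,S$ on $\mathcal{K}$, each injective with dense range and satisfying $C^2+S^2=I$, with $H\cong\mathcal{K}\oplus\mathcal{K}$, $P=\left(\begin{smallmatrix}I&0\\0&0\end{smallmatrix}\right)$, $Q=\left(\begin{smallmatrix}C^2&CS\\CS&S^2\end{smallmatrix}\right)$. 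Commuting with $P$ forces any intertwiner $T$ to be block-diagonal, $T=\mathrm{diag}(A,D)$; commuting with $Q$ then gives $AC^2=C^2A$ — hence $A\in\{C\}'$, and so $A\in\{S\}'$ and $A\in\{CS\}'$ since $S=(I-C^2)^{1/2}$ — together with $ACS=CSD$, whence $CS(A-D)=0$ and $A=D$ by injectivity of $CS$. Thus $\pi(C^\ast(p,q))'=\{\mathrm{diag}(A,A):A\in\{C\}'\}$, so irreducibility forces $\{C\}'=\C I_{\mathcal K}$, i.e. $\dim\mathcal{K}=1$ and $C=\sqrt{t}$ for some $t\in[0,1]$; injectivity of $C$ and $S$ rules out $t\in\{0,1\}$, and then $(P,Q)=(P_0,Q(t))$, i.e. $\pi\cong\pi_t$. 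Hence every irreducible representation factors through $\varphi$, so $\ker\varphi=0$ and $\varphi$ is an isomorphism.

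\emph{Main obstacle.} The technical heart is the injectivity step, and within it the reduction to generic position (Halmos' two-projection theorem, or Theorem \ref{genpos::thm:projections_in_generic_position}) combined with the commutant computation on the generic summand that collapses $\mathcal{K}$ to dimension one; the remaining parts are routine bookkeeping.
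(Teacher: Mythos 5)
The paper itself gives no proof of this statement: it is quoted as Pedersen's theorem with pointers to the literature (\cite{pedersen_measure_1968,power_hankel_1982,roch_algebras_1988,raeburn_c-algebra_1989,bottcher_gentle_2010}), so there is no internal argument to compare against. Your proof is a correct, self-contained reconstruction, and it follows the classical Halmos-based route (essentially the strategy of \cite{power_hankel_1982}, and close in spirit to \cite{vasilevski_algebra_1981}) rather than Pedersen's original argument or the Mackey-machine proof: the universal property gives $\varphi$, the Stone--Weierstrass plus closed-ideal argument for surjectivity is fine (the image is closed, contains both diagonal corners and all off-diagonal entries vanishing at $0,1$), and the injectivity step is sound -- the five-fold Halmos decomposition reduces $\pi(C^\ast(p,q))$, irreducibility collapses $H$ to a single summand, the four degenerate summands yield exactly the four characters, and on the generic summand the commutant computation (block-diagonality from $P$; $A\in\{C\}'=\{C^2\}'$, hence $A\in\{S\}'\cap\{CS\}'$; $CS(A-D)=0$ with $CS$ injective) forces $\{C\}'=\C I_{\mathcal K}$, so $\dim\mathcal K=1$ and $\pi\cong \mathrm{ev}_t\circ\varphi$ with $t\in(0,1)$; since irreducible representations separate points, $\ker\varphi=0$. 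One trivial slip: $\varphi\bigl((1-p)(1-q)(1-p)\bigr)(t)=\mathrm{diag}(0,t)$, not $\mathrm{diag}(0,1-t)$ (the latter is $\varphi\bigl((1-p)q(1-p)\bigr)$); this is harmless, as either function together with $\varphi(1-p)$ generates the $(2,2)$-corner copy of $C([0,1])$. Note also that your classification of the irreducible representations is exactly what the paper later extracts from this theorem in Corollary \ref{reps::cor:X_is_Spec(K22)}, so your argument derives that information independently of Pedersen's description, at the cost of invoking Halmos' theorem (or Theorem \ref{genpos::thm:projections_in_generic_position} specialized to $K_{2,2}$) as an external ingredient.
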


With this explicit picture of $C^\ast(p,q)$ at hand, it is not difficult to describe the spectrum of $C^\ast(p,q)$. For that, let $X$ be the quotient of the disjoint union 
$$
    [0,1] \amalg [0,1] = \{t, t^\prime: 0 \leq t \leq 1\}
$$ 
over the equivalence relation that identifies $t$ and $t^\prime$ for $t \in (0,1)$. The underlying set of this space has the form $\{a, b, c, d\} \cup (0,1)$ and can be sketched as in Figure \ref{reps::fig:X}.
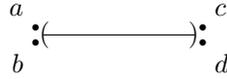
\begin{figure}[hbt]
    \begin{tikzpicture}
        \draw (0,0) -- (2,0);
        \node[label=(] at (.02,-.42) {};
        \node[label=)] at (1.98,-.42) {};
        \node[label=135:$a$, fill=black, circle, inner sep=.03cm] at (-.1, .1) {};
        \node[label=-135:$b$, fill=black, circle, inner sep=.03cm] at (-.1, -.1) {};
        \node[label=45:$c$, fill=black, circle, inner sep=.03cm] at (2.1, .1) {};
        \node[label=-45:$d$, fill=black, circle, inner sep=.03cm] at (2.1, -.1) {};
    \end{tikzpicture}
    \caption{The space $X$}
    \label{reps::fig:X}
\end{figure}
A neighborhood system of $a$ is given by $\{ \{a\} \cup (0, \frac{1}{n}): n \in \N\}$, and there are analogous neighborhood systems for $b, c$ and $d$. 
Note that the space $X$ is $T_0$ but not Hausdorff. 

\begin{corollary} 
    \label{reps::cor:X_is_Spec(K22)}
    We have
    \begin{align*}
        \mathrm{Spec}_{\leq 2}(C^\ast(K_{2,2})) = \mathrm{Spec}(C^\ast(K_{2,2})) \cong X.
    \end{align*}
    The points $a, b, c, d \in X$ correspond to the one-dimensional representations of $C^\ast(K_{2,2})$, while the points in $(0,1) \subset X$ correspond to the two-dimensional irreducible representations.
\end{corollary}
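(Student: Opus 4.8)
The plan is to leverage Theorem \ref{reps::thm:C*pq_as_continuous_functions} together with Proposition \ref{reps::prop:C*K22_is_C*pq}, which together identify $C^\ast(K_{2,2})$ with the algebra $A = \{f \in C([0,1], M_2) \mid f(0), f(1) \text{ diagonal}\}$. For such a "subhomogeneous" algebra built from continuous $M_2$-valued functions, the irreducible representations are completely understood: they are exactly the point evaluations. More precisely, for $t \in (0,1)$ the evaluation $\mathrm{ev}_t : f \mapsto f(t)$ is a surjection onto $M_2$, hence irreducible and two-dimensional; at the endpoints $t \in \{0,1\}$ the image $\mathrm{ev}_t(A)$ consists of diagonal matrices, which splits as a direct sum of two one-dimensional representations, giving the four points $a, b$ (at $t=0$) and $c, d$ (at $t=1$). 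First I would verify that these account for \emph{all} irreducible representations: any irreducible representation $\pi$ of $A$ must kill some primitive ideal, and the primitive ideals of $A = \Gamma_0(\mathcal{E})$ (continuous sections of the obvious upper-semicontinuous $C^\ast$-bundle over $[0,1]$ with fibre $M_2$ on the interior and $\C \oplus \C$ at the endpoints) are precisely the kernels of the point evaluations just listed — this is the standard description of $\mathrm{Prim}$ of a section algebra of a bundle (cf.\ \cite[Section IV.1.6, II.6.5]{blackadar_operator_2006}). In particular every irreducible representation is at most two-dimensional, which already yields the first equality $\mathrm{Spec}_{\leq 2}(C^\ast(K_{2,2})) = \mathrm{Spec}(C^\ast(K_{2,2}))$.

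Next I would set up the bijection $\Phi : X \to \mathrm{Spec}(C^\ast(K_{2,2}))$ sending $t \in (0,1)$ to (the class of) $\mathrm{ev}_t$, sending $a$ to the representation $f \mapsto f(0)_{11}$, $b$ to $f \mapsto f(0)_{22}$, and $c, d$ to the two diagonal entries of $f(1)$ respectively. Injectivity follows because distinct point evaluations on the interior have distinct kernels (separate them with a function vanishing at one point but not the other), and the one-dimensional representations at an endpoint are inequivalent to each other and to everything on the interior by a dimension count together with kernel considerations. Surjectivity is exactly the classification of irreducibles from the previous paragraph. It remains to check that $\Phi$ is a homeomorphism. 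Since both spaces are $T_0$ with finitely many "extra" points attached to a copy of $(0,1)$, it suffices to match the closure operators, equivalently the neighborhood filters: a basic open set in $X$ around $a$ is $\{a\} \cup (0, 1/n)$, and I would show its image is open in $\mathrm{Spec}$ by exhibiting it as $\{\pi : \pi(h) \neq 0\}$ for a suitable $h \in A$ (take $h$ supported near $0$ with $h(0) = \mathrm{diag}(1,0)$, so $h$ is killed by the representation $b$, by $c$, by $d$, and by $\mathrm{ev}_t$ for $t$ bounded away from $0$, but not by $a$ or by $\mathrm{ev}_t$ for small $t$). Running this for all four endpoints and for interior points (whose singletons are open, since one can find a function vanishing on a punctured neighborhood of $t$ but not at $t$) shows $\Phi$ is open; continuity of $\Phi$ follows symmetrically, or from the fact that a continuous bijection between these particular spaces that is open is automatically a homeomorphism. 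Finally, the identification of which points are one- versus two-dimensional is built into the construction of $\Phi$.

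The main obstacle is not any single computation but assembling the topological comparison cleanly: one must be careful that $\mathrm{Spec}$ carries the pullback of the hull-kernel topology on $\mathrm{Prim}$, not the hull-kernel topology directly, and that the non-Hausdorff "doubled endpoints" of $X$ correspond correctly to the fact that at $t = 0$ the two one-dimensional representations $a$ and $b$ cannot be separated from the net $\mathrm{ev}_{1/n}$ (both lie in its closure) — this is precisely the feature that forces $X$ rather than $[0,1]$. I would handle this by working entirely on the $\mathrm{Prim}$ side, where closures are given by the explicit hull-kernel formula in the excerpt, computing $\bigcap_{J \in \mathcal{J}} J$ for the relevant subsets $\mathcal{J}$ directly in terms of vanishing of continuous functions, and only at the end transporting the result to $\mathrm{Spec}$ via the (here bijective) canonical map. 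An alternative, perhaps shorter, route would be to cite a known computation of $\mathrm{Prim}(C^\ast(p,q))$ from the literature (e.g.\ \cite{raeburn_c-algebra_1989}), but I would prefer the self-contained bundle argument since all the needed ingredients are already on the table.
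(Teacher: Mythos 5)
Your overall route is the same as the paper's: identify $C^\ast(K_{2,2})$ with the function algebra via Proposition \ref{reps::prop:C*K22_is_C*pq} and Theorem \ref{reps::thm:C*pq_as_continuous_functions}, list the irreducible representations as point evaluations (two-dimensional on the interior, splitting into two characters at each endpoint), conclude $\mathrm{Spec}_{\leq 2} = \mathrm{Spec}$, and then match the hull-kernel topology with that of $X$. That is exactly what the paper does, only with more detail supplied on your side for the topological comparison.

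There is, however, one step that is simply false and should be repaired: you claim that for interior points $t \in (0,1)$ the singleton $\{\mathrm{ev}_t\}$ is open in the spectrum, ``since one can find a function vanishing on a punctured neighborhood of $t$ but not at $t$.'' No such function exists: an element of $C([0,1],M_2)$ that vanishes on a punctured neighborhood of $t$ vanishes at $t$ by continuity. And the conclusion is wrong as well, not just the justification: the part of $\mathrm{Spec}(C^\ast(K_{2,2}))$ consisting of the classes $\mathrm{ev}_t$, $t\in(0,1)$, carries the usual topology of $(0,1)$, so interior singletons are \emph{not} open --- if they were, the spectrum could not be homeomorphic to $X$, since in $X$ the interior points do not have open singletons. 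The repair is routine and stays within your framework: the basic open neighborhoods of an interior point of $X$ are intervals $(s,s') \subset (0,1)$, and the image $\{\mathrm{ev}_t \mid t \in (s,s')\}$ is open in the spectrum because it equals $\{\pi \mid \pi(h) \neq 0\}$ for any $h \in A$ with $h(t) \neq 0$ exactly for $t \in (s,s')$ (such $h$ vanishes at both endpoints, hence is killed by the four characters). With that replacement, together with your endpoint argument using $h$ supported near $0$ with $h(0)=\mathrm{diag}(1,0)$, the openness of the comparison map goes through, and the rest of your argument is sound.
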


\begin{proof}
    Let $K_{2,2}$ be labeled as in Figure \ref{reps::fig:K22}. In view of the previous Theorem \ref{reps::thm:C*pq_as_continuous_functions} and Proposition \ref{reps::prop:C*K22_is_C*pq} we may assume that the generators $p_x$ of $C^\ast(K_{2,2})$ are concretely given by the functions
    \begin{align*}
        \begin{aligned}
            p_{u_1}: [0,1] &\ni t \mapsto \begin{pmatrix}
                1 & 0 \\ 0 & 0
            \end{pmatrix}, \\
            p_{u_2}: [0,1] &\ni t \mapsto \begin{pmatrix}
                0 & 0 \\ 0 & 1
            \end{pmatrix},
        \end{aligned}
        & \qquad 
        \begin{aligned}
            p_{v_1}: [0,1] &\ni t \mapsto \begin{pmatrix}
                1-t & \sqrt{t(1-t)} \\ \sqrt{t(1-t)} & t
            \end{pmatrix}, \\
            p_{v_2}: [0,1] &\ni t \mapsto \begin{pmatrix}
                t & -\sqrt{t(1-t)} \\ -\sqrt{t(1-t)} & 1- t
            \end{pmatrix}.
        \end{aligned}
    \end{align*}
    One easily checks that the two-dimensional irreducible representations of $C^\ast(K_{2,2})$ are exactly the following (up to unitary equivalence):
    \begin{align*}
        \pi_t: C^\ast(K_{2,2}) \to M_2, \; f \mapsto f(t), \quad \text{ for } t \in (0,1),
    \end{align*}
    while the one-dimensional irreducible representations are exactly (up to unitary equivalence):
    \begin{align*}
        \pi_a: C^\ast(K_{2,2}) \to \C, \; f \mapsto f(0)_{11},
        & &\pi_c: C^\ast(K_{2,2}) \to \C, \; f \mapsto f(1)_{11}, \\
        \pi_b: C^\ast(K_{2,2}) \to \C, \; f \mapsto f(0)_{22},
        & &\pi_d: C^\ast(K_{2,2}) \to \C, \; f \mapsto f(1)_{22}.
    \end{align*}
    Together, these are all irreducible representations of $C^\ast(K_{2,2})$ up to unitary equivalence. By looking at the hull-kernel topology of the primitive ideal space
    \begin{align*}
        \mathrm{Prim}(C^\ast(K_{2,2})) = \{\pi_t^{-1}(0): t \in (0,1)\} \cup \{\pi_a^{-1}(0), \pi_b^{-1}(0), \pi_c^{-1}(0), \pi_d^{-1}(0)\}
    \end{align*}
    one confirms that the map
    \begin{align*}
        \varphi: \left\{ \; \begin{aligned}
            \mathrm{Spec}(C^\ast(K_{2,2})) &\to X, \\
            \pi_t &\mapsto t, &t \in (0,1), \\
            \pi_a & \mapsto a, \\
            \pi_b & \mapsto b, \\
            \pi_c & \mapsto c, \\
            \pi_d & \mapsto d,
        \end{aligned} \right.
    \end{align*}
    is a homeomorphism of topological space. This concludes the proof.
\end{proof}

\subsection{General situation}
\label{sec:one_two_dimensional_representations}

In what follows we investigate the connection between an arbitrary graph $G$ and $\mathrm{Spec}_{\leq 2}(C^\ast(G))$. It turns out that the one-dimensional irreducible representations correspond to the edges of the graph $G$, while the two-dimensional ones correspond to subgraphs that are isomorphic to $K_{2,2}$. This allows for a complete description of $\mathrm{Spec}_{\leq 2}(C^\ast(G))$ from the combinatorial structure of $G$.

Throughout this section let $G = (U, V, E)$ be a bipartite graph and let
\begin{align*}
    \mathcal{G} = \{H \subset G \mid H \cong K_{2,2}\}
\end{align*}
be the collection of all subgraphs of $G$ that are isomorphic to $K_{2,2}$.

\begin{lemma} 
    \label{reps::lemma:representations_vs_edges_and_subgraphs}
    The following statements hold:
    \begin{enumerate}[label=(\alph*)]
        \item Let $\pi$ be a one-dimensional irreducible representation of $C^\ast(G)$. Then there is an edge $\{u_0, v_0\} \in E$ such that for all vertices $x \in U \cup V$ one has
        \begin{align}
            \label{reps::eq:one_dimensional_irreducible_representation_pi_e}
            \pi(p_x) = \begin{cases}
                1, &\text{if } x \in  \{u_0, v_0\}, \\
                0, &\text{otherwise}.
            \end{cases}
        \end{align}
        Moreover, for every edge $e \in E$ there is exactly one such irreducible representation, which we denote by $\pi_e$.

        \item Let $\sigma$ be a two-dimensional irreducible representation of $C^\ast(G)$. Then there are vertices $u_1, u_2, v_1, v_2 \in U \cup V$ with $G(u_1, u_2, v_1, v_2) \cong K_{2, 2}$ such that up to a unitary transformation one has
        \begin{align} 
            \label{reps::eq:two_dimensional_irreducible_representation_sigma}
            \begin{aligned}
            &\begin{aligned}
                \sigma(p_{u_1}) &= \begin{pmatrix}
                1 & 0 \\ 0 & 0
                \end{pmatrix}, \\
                \sigma(p_{u_2}) &= \begin{pmatrix}
                0 & 0 \\ 0 & 1
                \end{pmatrix},
            \end{aligned}
            \quad \begin{aligned}
                \sigma(p_{v_1}) &= \begin{pmatrix}
                1-t & \sqrt{t(1-t)} \\ \sqrt{t(1-t)} & t
                \end{pmatrix}, \\
                \sigma(p_{v_2}) &= \begin{pmatrix}
                t & -\sqrt{t(1-t)} \\ -\sqrt{t(1-t)} & 1-t
                \end{pmatrix},
            \end{aligned}  \\
            & \sigma(p_x) = 0 \quad \text{ for all } x \in (U \cup V) \setminus \{u_1, u_2, v_1, v_2\}
            \end{aligned}
        \end{align}
        for some $t \in (0,1)$.
        
        Conversely, for every such vertices and some $t \in (0,1)$ there exists exactly one such irreducible representation, which we denote by $\sigma_{H, t}$ for $H = G(u_1, u_2, v_1, v_2)$.
    \end{enumerate}
    In particular, one has the identity of sets
    \begin{align*}
        \mathrm{Spec}_{\leq 2}(C^\ast(G)) = \{ \pi_e \mid e \in E \} \cup \{ \sigma_{H, t} \mid H \in \mathcal{G}, t \in (0,1) \}.
    \end{align*}
\end{lemma}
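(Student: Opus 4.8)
The plan is to handle the one-dimensional part (a) and the two-dimensional part (b) separately, and in both cases to exploit that an irreducible representation $\pi$ of $C^\ast(G)$ carries the generating projections $p_x$ to genuine projections on $\C$ or $\C^2$ whose sum over $U$ (resp.\ over $V$) is the identity, with the orthogonality relations \eqref{bga::eq:orthogonality_relation} still in force. For part (a): a one-dimensional representation sends each $p_x$ to $0$ or $1$. From $\sum_{u\in U}\pi(p_u)=1$ in $\C$ we get exactly one $u_0\in U$ with $\pi(p_{u_0})=1$, and likewise exactly one $v_0\in V$ with $\pi(p_{v_0})=1$. The orthogonality relation forces $\pi(p_{u_0})\pi(p_{v_0})=0$ unless $\{u_0,v_0\}\in E$, hence $\{u_0,v_0\}$ must be an edge, giving \eqref{reps::eq:one_dimensional_irreducible_representation_pi_e}. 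Conversely, for a fixed edge $e=\{u_0,v_0\}$ one checks that the prescription \eqref{reps::eq:one_dimensional_irreducible_representation_pi_e} defines a $G$-projection family in $\C$, hence by the universal property extends to a (automatically irreducible, being one-dimensional) representation $\pi_e$; uniqueness is immediate since the values on all generators are fixed. I would also note that $\pi_e\not\cong\pi_f$ for $e\neq f$ since they differ on some generator.

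For part (b): let $\sigma:C^\ast(G)\to M_2$ be two-dimensional irreducible. Each $\sigma(p_x)$ is a projection in $M_2$, so has rank $0$, $1$, or $2$; rank $2$ is impossible for $x$ in, say, $U$ because then all other $\sigma(p_{u'})=0$ and the image of $\sigma$ would be spanned by that single generator together with the $p_v$'s, forcing $\sigma$ to reduce — more carefully, rank $2$ forces $\sigma(p_x)=1$, and then $\sigma(p_y)=0$ for all other $y$ on the same side, and combining with \eqref{bga::eq:orthogonality_relation} the image commutes, contradicting irreducibility and two-dimensionality. So every $\sigma(p_x)$ has rank $0$ or $1$. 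Since $\sum_{u\in U}\sigma(p_u)=I_2$ and the $\sigma(p_u)$ are pairwise orthogonal rank-$\le 1$ projections, exactly two of them, say $\sigma(p_{u_1})$ and $\sigma(p_{u_2})$, are nonzero (rank $1$), mutually orthogonal, summing to $I_2$; likewise exactly two on the $V$-side, $\sigma(p_{v_1}),\sigma(p_{v_2})$. All other $\sigma(p_x)=0$. The orthogonality relations then show $\{u_i,v_j\}\in E$ for all $i,j$: if $\{u_i,v_j\}\notin E$ then $\sigma(p_{u_i})\sigma(p_{v_j})=0$, i.e.\ the range of $\sigma(p_{v_j})$ (a line) lies in the kernel of $\sigma(p_{u_i})$ (also a line), so $\sigma(p_{v_j})$ equals $\sigma(p_{u_{i'}})$ for the other index $i'$; running this for both $j$ would force $\sigma(p_{v_1})=\sigma(p_{v_2})$, impossible. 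Hence $G(u_1,u_2,v_1,v_2)\cong K_{2,2}$, and $\sigma$ factors through the quotient $C^\ast(G)\to C^\ast(K_{2,2})$ of Proposition \ref{bga::proposition:algebra_of_subgraphs}. Now invoke Corollary \ref{reps::cor:X_is_Spec(K22)}: the two-dimensional irreducibles of $C^\ast(K_{2,2})$ are precisely the $\pi_t$, $t\in(0,1)$, in the explicit form of the proof of that corollary, which after a unitary conjugation is exactly \eqref{reps::eq:two_dimensional_irreducible_representation_sigma}. Conversely, given such vertices with $G(u_1,u_2,v_1,v_2)\cong K_{2,2}$ and $t\in(0,1)$, the formulas \eqref{reps::eq:two_dimensional_irreducible_representation_sigma} define a $G$-projection family in $M_2$ (the nonzero part satisfying the $K_{2,2}$ relations, the rest being zero and trivially orthogonal), hence a representation $\sigma_{H,t}$, which is irreducible because the images of $\sigma(p_{u_1})$ and $\sigma(p_{v_1})$ already generate $M_2$ for $t\in(0,1)$; uniqueness up to unitary equivalence follows from the corresponding uniqueness in $C^\ast(K_{2,2})$.

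The final displayed identity of sets is then just the union of the two classifications, noting that a one-dimensional and a two-dimensional representation can never be equivalent. I expect the main obstacle to be the bookkeeping in part (b): carefully arguing that no $\sigma(p_x)$ can have rank $2$, that exactly two projections on each side are nonzero, and that the four resulting vertices span a $K_{2,2}$ rather than a proper subgraph — all of this is elementary linear algebra about rank-one projections in $M_2$, but it must be done cleanly so that the reduction to $C^\ast(K_{2,2})$ and Corollary \ref{reps::cor:X_is_Spec(K22)} is legitimate. Everything after that reduction is a direct quotation of the $K_{2,2}$ case.
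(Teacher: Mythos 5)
Your part (a) is correct and substantively the same as the paper's: the paper derives $u_0\sim v_0$ by expanding $p_{u_0}=p_{u_0}\bigl(\sum_{v}p_v\bigr)$ over the neighbors of $u_0$, while you simply observe that $\pi(p_{u_0})\pi(p_{v_0})=1\neq 0$ forbids a non-edge; these are equivalent. In part (b) you take a genuinely different route after the common linear-algebra step: having found that exactly two rank-one projections survive on each side and that the four vertices induce a $K_{2,2}$, you factor $\sigma$ through $C^\ast(G)/I_H\cong C^\ast(H)\cong C^\ast(K_{2,2})$ via Proposition \ref{bga::proposition:algebra_of_subgraphs} and quote the classification from Corollary \ref{reps::cor:X_is_Spec(K22)}, whereas the paper normalizes $\sigma(p_{u_1})=E_{11}$, $\sigma(p_{u_2})=E_{22}$ by hand, extracts the parameter $t\in(0,1)$, and only then reads off the edges from $\sigma(p_{u_i})\sigma(p_{v_j})\neq 0$. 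Both are legitimate; your version reuses the $K_{2,2}$ computation already available (and is not circular, since that corollary rests on Pedersen's description of $C^\ast(p,q)$, not on this lemma), while the paper's is self-contained and obtains the $K_{2,2}$ structure as a byproduct of the normalization.

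One step of your part (b) does not go through as written: to prove $\{u_i,v_j\}\in E$ for all $i,j$ you argue that a non-edge forces $\sigma(p_{v_j})=\sigma(p_{u_{i'}})$ and that ``running this for both $j$'' yields $\sigma(p_{v_1})=\sigma(p_{v_2})$. That only treats the case where both $\{u_i,v_1\}$ and $\{u_i,v_2\}$ fail to be edges; if exactly one pair, say $\{u_1,v_1\}$, is a non-edge, there is nothing to run for $j=2$. The repair is short and already in your toolkit: from $\sigma(p_{v_1})=\sigma(p_{u_2})$ one gets $\sigma(p_{v_2})=1-\sigma(p_{v_1})=\sigma(p_{u_1})$, so the image of $\sigma$ is generated by the commuting projections $\sigma(p_{u_1}),\sigma(p_{u_2})$ and is therefore commutative, contradicting two-dimensional irreducibility exactly as in your rank-two exclusion (alternatively, follow the paper's order: normalize first, get $t\in(0,1)$, and conclude that all four products $\sigma(p_{u_i})\sigma(p_{v_j})$ are nonzero). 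With that fix, the reduction to Corollary \ref{reps::cor:X_is_Spec(K22)}, the converse direction, and the final identity of sets are all in order.
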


\begin{proof}
    Ad (a): 
    Clearly,
    \begin{align*}
        1 = \pi(1) = \sigma\left(\sum_{u \in U} p_u\right) = \pi\left(\sum_{v \in V} p_v\right),
    \end{align*}
    and $\pi(p_u), \pi(p_v) \in \{0,1\}$ for all $u \in U, v \in V$. Hence, there exist $u_0 \in U$ and $v_0 \in V$ with $1 = \pi(p_{u_0}) = \pi(p_{v_0})$. Assume $u_0 \not \sim v_0$. Using the relations \eqref{bga::eq:partition_relation} and \eqref{bga::eq:orthogonality_relation} one obtains
    \begin{align*}
        1 = \pi(p_{u_0}) = \pi\left(p_{u_0} \left( \sum_{v \in V} p_v \right) \right) &= \pi\left(p_{u_0} \left( \sum_{v \in \mathcal{N}(u_0)} p_v \right) \right) \\
            &= \pi(p_{u_0}) \sum_{v \in \mathcal{N}(u_0)} \pi(p_v) = 0.
    \end{align*}
    This is a contraction. Thus, we must have $u_0 \sim v_0$.

    For the last statement, assume that vertices $u_0 \sim v_0$ are given. Then the universal property of $C^\ast(G)$ yields a (unique) $\ast$-homomorphism $\pi: C^\ast(G) \to \C$ with \eqref{reps::eq:one_dimensional_irreducible_representation_pi_e}. Indeed, it is not hard to check that the elements $\pi(p_u)$ and $\pi(p_v)$ satisfy the relations \eqref{bga::eq:partition_relation} and \eqref{bga::eq:orthogonality_relation} required from $C^\ast(G)$. Evidently, $\pi$ is irreducible.

    Ad (b):
    As
    \begin{align*}
        \begin{pmatrix}
            1 & 0 \\ 0 & 1
        \end{pmatrix}
        = \sum_{u \in U} \sigma(p_u) = \sum_{v \in V} \sigma(p_v)
    \end{align*}
    and every $\sigma(p_x)$ for $x \in U \sqcup V$ is a projection,
    there are vertices $u_1, u_2, v_1$ and $v_2$ with 
    \begin{align*}
        \begin{pmatrix}
            1 & 0 \\ 0 & 1
        \end{pmatrix}
        = \sigma(p_{u_1}) + \sigma(p_{u_2}) = \sigma(p_{v_1}) + \sigma(p_{v_2})
    \end{align*}
    and 
    $$ \sigma(p_x) = \begin{pmatrix}
        0 &0 \\ 0 &0
    \end{pmatrix} $$
    for all $x \in (U \sqcup V) \setminus \{u_1, u_2, v_1, v_2\}$. 
    For a contradiction assume
    \begin{align*}
        \sigma(p_{u_1}) = \begin{pmatrix}
            0 & 0 \\ 0 & 0
        \end{pmatrix}
        \quad \text{ and } \quad 
        \sigma(p_{u_2}) = \begin{pmatrix}
            1 & 0 \\ 0 & 1
        \end{pmatrix}.
    \end{align*}
    Then, $\mathrm{im}(\sigma)$ is the closed span of $\sigma(p_{v_1})$ and $\sigma(p_{v_2})$ which are two orthogonal projections that sum up to the unit. This would mean, evidently, that $\sigma$ is not irreducible -- contradicting the assumption. Thus, all four matrices $\sigma(p_{u_1}), \sigma(p_{u_2}), \sigma(p_{v_1})$ and $\sigma(p_{v_2})$ are projections onto one-dimensional subspaces of $\C^2$.
    By applying a unitary transformation if necessary, we can assume $\sigma(p_{u_1}) = E_{11}$ and $\sigma_{p_{u_2}} = E_{22}$. Then, $\sigma(p_{v_1})$ is the projection onto some non-vanishing vector $a e_1 + b e_2 \in \C^2$, and by applying a diagonal unitary transformation we can assume $a, b \in [0, \infty)$. After normalizing this vector, we have that $\sigma(p_{v_1})$ is the projection onto $t e_1 + (1-t) e_2$ for some $t \in [0,1]$ with $t^2 + (1-t)^2 = 1$. Consequently, we get
    \begin{align*}
        \sigma(p_{v_1}) = \begin{pmatrix}
            1-t & \sqrt{t(1-t)} \\ \sqrt{t(1-t)} & t
        \end{pmatrix}
        \quad \text{and} \quad 
        \sigma(p_{v_2}) = \begin{pmatrix}
            t & -\sqrt{t(1-t)} \\ -\sqrt{t(1-t)} & 1-t
        \end{pmatrix}.
    \end{align*}
    Further, one observes immediately $t \not \in \{0,1\}$ since otherwise $\sigma$ would not be irreducible. 
    It follows, in particular, $\sigma(p_{u_i}) \sigma(p_{v_j}) \neq 0$ for $i, j \leq 2$. Similarly as before, \eqref{bga::eq:partition_relation} and \eqref{bga::eq:orthogonality_relation} then entail $u_i \sim v_j$ for all $i,j$ which means that $G(u_1, u_2, v_1, v_2) \cong K_{2,2}$.

    For the final statement, one readily checks that the elements $\sigma(p_{u})$ and $\sigma(p_v)$ given by \eqref{reps::eq:two_dimensional_irreducible_representation_sigma} satisfy the relations \eqref{bga::eq:partition_relation} and \eqref{bga::eq:orthogonality_relation}. Therefore, the universal property of $C^\ast(G)$ yields a (unique) $\ast$-homomorphism $\sigma: C^\ast(G) \to M_2$ satisfying \eqref{reps::eq:two_dimensional_irreducible_representation_sigma}. Evidently, $\mathrm{im}(\sigma) = M_2$, and therefore the representation $\sigma$ is irreducible.
\end{proof}

\begin{example}
    \label{reps::ex:edges_and_Spec(C*(K22))}
    Let us again take a look at the complete bipartite graph $K_{2,2}$ which was discussed in the previous section. There we saw that the spectrum of $C^\ast(K_{2,2})$ is isomorphic to the space $X = \{a, b, c, d\} \sqcup (0,1)$, where the points $a, b, c, d$ correspond to one-dimensional irreducible representations. A sketch of this space can be seen in Figure \ref{reps::fig:X}. The points $a$ and $b$ ($c$ and $d$) have no disjoint open neighborhoods, while all other pairs of distinct points from $\{a, b, c, d\}$ have disjoint open neighborhoods. By the previous lemma, these points correspond to the four edges of $K_{2,2}$. In the proof of Corollary \ref{reps::cor:X_is_Spec(K22)} the irreducible representations $\pi_a, \pi_b, \pi_c$ and $\pi_d$ corresponding to the points $a, b, c, d \in X$ were described explicitly. Let us now check which edges correspond to which points. In the terminology of Corollary \ref{reps::cor:X_is_Spec(K22)}, we have
    \begin{align*}
        C^\ast(K_{2,2}) = \{f \in C([0,1], M_2) \mid f(0), f(1) \text{ are diagonal matrices}\}, \\
        \begin{aligned}
            p_{u_1} &= \begin{pmatrix}
                1 & 0 \\ 0 & 0
            \end{pmatrix}_{t \in [0,1]},
            &p_{v_1} &= \begin{pmatrix}
                1-t & \sqrt{t(1-t)} \\ \sqrt{t(1-t)} & t
            \end{pmatrix}_{t \in [0,1]}, \\
            p_{u_2} &= \begin{pmatrix}
                0 & 0 \\ 0 & 1
            \end{pmatrix}_{t \in [0,1]},
            &p_{v_2} &= \begin{pmatrix}
                t & -\sqrt{t(1-t)} \\ -\sqrt{t(1-t)} & 1-t
            \end{pmatrix}_{t \in [0,1]},
        \end{aligned}
    \end{align*}
    and the representations $\pi_a, \pi_b, \pi_c$ and $\pi_d$ are given by
    \begin{align*}
        \pi_a: C^\ast(K_{2,2}) \to \C, \; f &\mapsto f(0)_{11},
        &\pi_c: C^\ast(K_{2,2}) \to \C, \; f &\mapsto f(1)_{11}, \\
        \pi_b: C^\ast(K_{2,2}) \to \C, \; f &\mapsto f(0)_{22},
        &\pi_d: C^\ast(K_{2,2}) \to \C, \; f &\mapsto f(1)_{22}.
    \end{align*}
    It follows that $\pi_a(u_1) = \pi_a(v_1) = 1$ and $\pi_a(u_2) = \pi_a(v_2) = 0$. Using the edge labels from Figure \ref{reps::fig:K22} we have $\{u_1, v_1\} = e_1$, and we see $\pi_a = \pi_{e_1}$, where $\pi_{e_1}$ is the one-dimensional irreducible representation corresponding to the edge $e_1$ from the previous lemma. Checking the other representations in the same way, one arrives at the following correspondences:
    \begin{align*}
        a & &\leftrightarrow & &\pi_a = \pi_{e_1} & &\leftrightarrow& & e_1 = \{u_1, v_1\}, \\
        b & &\leftrightarrow & &\pi_b = \pi_{e_4} & &\leftrightarrow& & e_4 = \{u_2, v_2\}, \\
        c & &\leftrightarrow & &\pi_c = \pi_{e_2} & &\leftrightarrow& & e_2 = \{u_1, v_2\}, \\
        d & &\leftrightarrow & &\pi_d = \pi_{e_3} & &\leftrightarrow& & e_3 = \{u_2, v_1\}.
    \end{align*}
    Interestingly, the pairs $\{a,b\}$ and $\{c,d\}$ of points without disjoint open neighborhoods correspond to the pairs of edges $\{e_1, e_4\}$ and $\{e_2, e_3\}$ which are the only two pairs of non-adjacent edges in $K_{2,2}$. Thus, the spectrum $X$ of $C^\ast(K_{2,2})$ recalls which edges of $K_{2,2}$ are adjacent and which are not. This is an important feature which we will use later on. Therefore, we summarize the results of this example in the following lemma.
\end{example}

\begin{lemma}
    \label{reps::lemma:adjacency_of_K22edges_and_Spec(C*(K22))}
    Let the complete graph $K_{2,2}$ have edge labels as in Figure \ref{reps::fig:K22}. In $\mathrm{Spec}_{\leq 2}(C^\ast(K_{2,2}))$ the one-dimensional irreducible representations $\pi_{e_1}, \pi_{e_2}, \pi_{e_3}$ and $\pi_{e_4}$ from Lemma \ref{reps::lemma:representations_vs_edges_and_subgraphs} have the following property: Whenever $e_i$ and $e_j$ are two distinct edges, then $\pi_{e_i}$ and $\pi_{e_j}$ have disjoint open neighborhoods if and only if $e_i$ and $e_j$ are adjacent.
\end{lemma}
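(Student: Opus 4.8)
The plan is to read the answer off from the structures already established: the homeomorphism $\mathrm{Spec}(C^\ast(K_{2,2})) \cong X$ of Corollary~\ref{reps::cor:X_is_Spec(K22)}, the explicit matching of one-dimensional representations with edges carried out in Example~\ref{reps::ex:edges_and_Spec(C*(K22))}, and a one-line adjacency count in $K_{2,2}$. Recall first that for $K_{2,2}$ every irreducible representation is one- or two-dimensional, so $\mathrm{Spec}_{\leq 2}(C^\ast(K_{2,2})) = \mathrm{Spec}(C^\ast(K_{2,2}))$ and we may work inside $X$.

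First I would determine which pairs of the points $a, b, c, d$ of $X$ fail to have disjoint open neighborhoods. By construction $a$ and $b$ arise from the two left endpoints $0$ and $0'$ of the two glued copies of $[0,1]$, while $c$ and $d$ arise from the right endpoints $1$ and $1'$; accordingly the basic neighborhoods of $a$ and of $b$ are the sets $\{a\} \cup (0,1/n)$ resp.\ $\{b\} \cup (0,1/n)$, and the basic neighborhoods of $c$ and of $d$ are $\{c\} \cup (1-1/n,1)$ resp.\ $\{d\} \cup (1-1/n,1)$. Hence any open set containing $a$ and any open set containing $b$ both contain a common interval $(0,\varepsilon)$ and so meet; the same applies to $c$ and $d$. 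For each of the remaining four pairs, namely $\{a,c\},\{a,d\},\{b,c\},\{b,d\}$, the neighborhoods $\{a\}\cup(0,1/2)$ and $\{c\}\cup(1/2,1)$ and their obvious analogues are disjoint. Transporting this along the homeomorphism $\varphi$ of Corollary~\ref{reps::cor:X_is_Spec(K22)}, among $\pi_{e_1},\dots,\pi_{e_4}$ exactly the two pairs sent to $\{a,b\}$ and to $\{c,d\}$ lack disjoint open neighborhoods.

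It remains to identify these pairs of edges and compare with adjacency. Example~\ref{reps::ex:edges_and_Spec(C*(K22))} records the correspondences $a \leftrightarrow \pi_{e_1}$, $b \leftrightarrow \pi_{e_4}$, $c \leftrightarrow \pi_{e_2}$, $d \leftrightarrow \pi_{e_3}$, so the non-separable pairs of representations are $\{\pi_{e_1},\pi_{e_4}\}$ and $\{\pi_{e_2},\pi_{e_3}\}$. With the labels of Figure~\ref{reps::fig:K22} one has $e_1=\{u_1,v_1\}$, $e_2=\{u_1,v_2\}$, $e_3=\{u_2,v_1\}$, $e_4=\{u_2,v_2\}$; two edges are adjacent precisely when they share a vertex, and checking the six pairs shows $e_1\cap e_4=\emptyset$ and $e_2\cap e_3=\emptyset$ while every other pair shares exactly one vertex. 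Thus the non-adjacent pairs are exactly $\{e_1,e_4\}$ and $\{e_2,e_3\}$, which matches the list of non-separable pairs, and the lemma follows.

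I do not expect a substantial obstacle here: the statement is a bookkeeping consequence of the already-proved homeomorphism $\mathrm{Spec}(C^\ast(K_{2,2})) \cong X$. The only points needing mild care are transporting the separation properties of individual points correctly along $\varphi$ and keeping track that $c$ and $d$ accumulate at the endpoint $1$ of $[0,1]$ rather than at $0$.
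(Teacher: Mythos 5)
Your proposal is correct and follows essentially the same route as the paper: the paper's proof simply points to Example \ref{reps::ex:edges_and_Spec(C*(K22))}, i.e.\ to the correspondence $a \leftrightarrow e_1$, $b \leftrightarrow e_4$, $c \leftrightarrow e_2$, $d \leftrightarrow e_3$ under the homeomorphism of Corollary \ref{reps::cor:X_is_Spec(K22)}, together with the observation that $\{a,b\}$ and $\{c,d\}$ are exactly the non-separable pairs in $X$ and $\{e_1,e_4\}$, $\{e_2,e_3\}$ exactly the non-adjacent pairs. You merely spell out the separation properties of $X$ (which the paper leaves implicit in its description of the neighborhood systems), so there is nothing to correct.
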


\begin{proof}
    See Example \ref{reps::ex:edges_and_Spec(C*(K22))}. The proof follows from the fact that the points $a, b, c$ and $d$ in $X$ correspond to the edges $e_1, e_2, e_3$ and $e_4$ in such a way that $\{a,b\}$ and $\{c,d\}$ correspond to non-adjacent edges, while all other pairs of distinct points from $\{a, b, c, d\}$ correspond to adjacent edges.
\end{proof}

In Lemma \ref{reps::lemma:Spec<=2(C*G)} we obtained a description of $\mathrm{Spec}_{\leq 2}(C^\ast(G))$ as a set. In the next lemma, we describe its topology using the space $X = \{a, b, c, d\} \sqcup (0,1)$ from Corollary \ref{reps::cor:X_is_Spec(K22)} as a building block.

\begin{lemma}
    \label{reps::lemma:Spec<=2(C*G)}
    Recall that $\mathcal{G} = \{H \subset G \mid H \subset G, H \cong K_{2,2}\}$ is the collection of all subgraphs of $G$ that are isomorphic to $K_{2,2}$, and recall from Lemma \ref{reps::lemma:representations_vs_edges_and_subgraphs} that
    \begin{align*}
        \mathrm{Spec}_{\leq 2}(C^\ast(G)) = \{\pi_e \mid e \in E\} \sqcup \{\sigma_{H,t} \mid H \in \mathcal{G}, 0 < t < 1\}.
    \end{align*}
    For every $H \in \mathcal{G}$ the set
    \begin{align*}
        X_H := \{\pi_{e_{H,1}}, \pi_{e_{H,2}}, \pi_{e_{H,3}}, \pi_{e_{H,4}}\} \sqcup \{\sigma_{H,t} \mid 0 < t < 1\} \subset \mathrm{Spec}_{\leq 2}(C^\ast(G))
    \end{align*}
    is a closed subset that is isomorphic to the space $X$ from Corollary \ref{reps::cor:X_is_Spec(K22)}, where $e_{H,1}, e_{H,2}, e_{H,3}$ and $e_{H,4}$ are the edges of $H$.
    Further, there is a homeomorphism
    \begin{align*}
        \bigcup_{H \in \mathcal{G}} \{\sigma_{H,t} \mid 0 < t < 1\} \cong \coprod_{H \in \mathcal{G}} (0,1),
    \end{align*}
    which maps every set $\{\sigma_{H,t} \mid 0 < t < 1\}$ onto a different copy of $(0,1)$.

    Finally, the following is true:
    \begin{enumerate}[label=(\alph*)]
        \item $\pi_e$ is clopen, i.e. $\{\pi_e\}$ is closed and open, in $\mathrm{Spec}_{\leq 2}(C^\ast(G))$ if $e$ is not contained in any $H \in \mathcal{G}$,
        \item if $e_1, e_2 \in E$ are contained in some $H \in \mathcal{G}$ then $\pi_{e_1}$ and $\pi_{e_2}$ have disjoint open neighborhoods in $\mathrm{Spec}_{\leq 2}(C^\ast(G))$ if and only if $e_1$ and $e_2$ are adjacent,
        \item the points $\sigma_{H,t}$ have an open neighborhood in $\mathrm{Spec}_{\leq 2}(C^\ast(G))$ that is homeomorphic to $(0,1)$ for every $H \in \mathcal{G}$ and $0 < t < 1$.
    \end{enumerate}
\end{lemma}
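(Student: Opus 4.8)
The plan is to reduce every assertion to the model $\mathrm{Spec}(C^\ast(K_{2,2}))\cong X$ of Corollary \ref{reps::cor:X_is_Spec(K22)}, the explicit form of the $1$- and $2$-dimensional irreducible representations from Lemma \ref{reps::lemma:representations_vs_edges_and_subgraphs}, and the behaviour of $C^\ast(G)$ under subgraphs (Proposition \ref{bga::proposition:algebra_of_subgraphs}). Fix $H\in\mathcal{G}$. Writing $(S_H)\subset C^\ast(G)$ for the ideal generated by the $p_x$ with $x\notin V(H)$, Proposition \ref{bga::proposition:algebra_of_subgraphs} gives $C^\ast(G)/(S_H)\cong C^\ast(H)$, so $\mathrm{Spec}(C^\ast(H))$ is homeomorphic to the closed set $Y_H=\{[\pi]\in\mathrm{Spec}(C^\ast(G)):\pi(p_x)=0\text{ for }x\notin V(H)\}$. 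By Lemma \ref{reps::lemma:representations_vs_edges_and_subgraphs} one has $\pi_e\in Y_H$ iff $e\in E(H)$, and $\sigma_{H',t}\in Y_H$ iff $V(H')\subseteq V(H)$, which for two $K_{2,2}$-subgraphs forces $H'=H$; hence $Y_H=X_H$, and since $C^\ast(H)\cong C^\ast(K_{2,2})$ has only $1$- and $2$-dimensional irreducible representations, $Y_H\subseteq\mathrm{Spec}_{\leq 2}(C^\ast(G))$. Therefore $X_H$ is closed in $\mathrm{Spec}_{\leq 2}(C^\ast(G))$ and, via $H\cong K_{2,2}$ and Corollary \ref{reps::cor:X_is_Spec(K22)}, homeomorphic to $X$; under this homeomorphism the $1$-dimensional points of $X_H$ are exactly the $\pi_e$ with $e\in E(H)$, and by Lemma \ref{reps::lemma:adjacency_of_K22edges_and_Spec(C*(K22))} two of them have disjoint open neighbourhoods in $X_H$ iff the corresponding edges are adjacent.

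For $x\in U\cup V$ set $W_x=\{[\pi]\in\mathrm{Spec}_{\leq 2}(C^\ast(G)):\pi(p_x)\neq 0\}$, which is open since $p_x$ is a projection and $[\pi]\mapsto\|\pi(p_x)\|$ is lower semicontinuous; from Lemma \ref{reps::lemma:representations_vs_edges_and_subgraphs}, $\pi_e\in W_x\Leftrightarrow x\in e$ and $\sigma_{H,t}\in W_x\Leftrightarrow x\in V(H)$, so with $\mathcal{G}(e)=\{H\in\mathcal{G}:e\in E(H)\}$ and $I_H=\{\sigma_{H,t}:0<t<1\}$ one computes $W_u\cap W_v=\{\pi_e\}\cup\bigcup_{H\in\mathcal{G}(e)}I_H$ for any edge $e=\{u,v\}$. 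Next, each singleton $\{\pi_e\}$ is closed: if $e$ is contained in no $H\in\mathcal{G}$ then $C^\ast(G)\cong C^\ast(G')\oplus\C$ by Lemma \ref{bga::lemma:loose_edge_gives_direct_summand_C} with $\pi_e$ the character onto the $\C$-summand, so $\{\pi_e\}$ is even clopen — this is (a); otherwise $\pi_e$ is a point of some closed $X_H\cong X$, and points of $X$ are closed. As $E$ is finite, $\{\pi_e:e\in E\}$ is closed, hence $\mathcal{S}:=\bigcup_{H\in\mathcal{G}}I_H$ is open; since $\mathcal{G}$ is finite and each $I_H=\mathcal{S}\cap X_H$ is closed in $\mathcal{S}$, each $I_H$ is clopen in $\mathcal{S}$, and the homeomorphism $X_H\cong X$ restricts to a homeomorphism $I_H\cong(0,1)$. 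This yields the homeomorphism $\mathcal{S}\cong\coprod_{H\in\mathcal{G}}(0,1)$, and since each $I_H$ is an open neighbourhood (homeomorphic to $(0,1)$) of each of its points, also statement (c).

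For (b), if $e_1,e_2\in E(H_0)$ are non-adjacent for some $H_0\in\mathcal{G}$, then $\pi_{e_1},\pi_{e_2}$ admit no disjoint open neighbourhoods already inside $X_{H_0}$ by the first paragraph, hence none in $\mathrm{Spec}_{\leq 2}(C^\ast(G))$. For the converse let $e_1=\{z,w_1\}$ and $e_2=\{z,w_2\}$ be adjacent and both contained in some $H\in\mathcal{G}$, and work inside the open set $W=W_z\cap W_{w_1}=\{\pi_{e_1}\}\cup\bigcup_{H\in\mathcal{G}(e_1)}I_H$. For every $H\in\mathcal{G}(e_1)\cap\mathcal{G}(e_2)$, reading Lemma \ref{reps::lemma:adjacency_of_K22edges_and_Spec(C*(K22))} inside $X_H$ shows that $\pi_{e_1}$ and $\pi_{e_2}$ attach to the two opposite ends of $I_H\cong(0,1)$, so choosing a cut point $\rho_H$ splits $I_H=I_H^1\sqcup\{\rho_H\}\sqcup I_H^2$ with $\pi_{e_1}$ in the closure of $I_H^1$ but not of $\{\rho_H\}\cup I_H^2$. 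Put
\begin{align*}
    N_1 = \{\pi_{e_1}\} \cup \bigcup_{H\in\mathcal{G}(e_1)\setminus\mathcal{G}(e_2)} I_H \cup \bigcup_{H\in\mathcal{G}(e_1)\cap\mathcal{G}(e_2)} I_H^1 = W \setminus \bigcup_{H\in\mathcal{G}(e_1)\cap\mathcal{G}(e_2)} \bigl(\{\rho_H\}\cup I_H^2\bigr),
\end{align*}
and define $N_2$ symmetrically around $\pi_{e_2}$. These are disjoint, and $N_1$ is open because the removed set is closed in $W$: its closure lies in $\bigcup_H X_H$ and only adds $1$-dimensional points $\pi_f$ with $f\neq e_1$, none of which belong to $W$. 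Hence $\pi_{e_1},\pi_{e_2}$ have disjoint open neighbourhoods, which finishes (b).

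The only step that is not pure bookkeeping is this last construction: the pieces $X_H$ are glued to one another precisely along their $1$-dimensional points, and one must check that the separation of $\pi_{e_1}$ from $\pi_{e_2}$ which is available inside a single $X_H$ survives the gluing. Everything else follows formally from Proposition \ref{bga::proposition:algebra_of_subgraphs}, Corollary \ref{reps::cor:X_is_Spec(K22)}, Lemma \ref{reps::lemma:representations_vs_edges_and_subgraphs}, Lemma \ref{reps::lemma:adjacency_of_K22edges_and_Spec(C*(K22))} and Lemma \ref{bga::lemma:loose_edge_gives_direct_summand_C}.
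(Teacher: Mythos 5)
Your proposal is correct, and for the bulk of the lemma it follows the paper's own route: you identify $X_H$ with $\mathrm{Spec}_{\leq 2}$ of the quotient $C^\ast(G)/(S_H)\cong C^\ast(H)\cong C^\ast(K_{2,2})$ via Proposition \ref{bga::proposition:algebra_of_subgraphs} and Corollary \ref{reps::cor:X_is_Spec(K22)}, you get the coproduct statement from the finitely many pairwise disjoint closed pieces $I_H$, and you prove (a) exactly as the paper does via Lemma \ref{bga::lemma:loose_edge_gives_direct_summand_C}. The genuine difference is in (b): the paper transfers the adjacency criterion from $\mathrm{Spec}_{\leq 2}(C^\ast(H))\cong X_H$ to $\mathrm{Spec}_{\leq 2}(C^\ast(G))$ with a single ``it follows'', which is immediate only for the implication ``disjoint ambient neighborhoods $\Rightarrow$ adjacent'' (restrict to $X_H$); since $X_H$ is closed rather than open, separation inside $X_H$ does not formally yield separation in the ambient space. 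You spotted exactly this and supplied the missing half: working inside the open sets $W_x=\{\pi:\pi(p_x)\neq 0\}$ (open by lower semicontinuity of $\pi\mapsto\|\pi(p_x)\|$), computing $W_z\cap W_{w_1}=\{\pi_{e_1}\}\cup\bigcup_{H\in\mathcal{G}(e_1)}I_H$, and cutting each interval $I_H$ with $H\in\mathcal{G}(e_1)\cap\mathcal{G}(e_2)$ at an interior point; your verification that the removed set is relatively closed in $W$ (its closure stays in the closed sets $X_H$ and only picks up one-dimensional points different from $\pi_{e_1}$, none of which lie in $W$) is sound. So your argument is a more complete version of the paper's, at the cost of some bookkeeping; an even shorter way to close the same gap would be to apply lower semicontinuity to $a=p_z p_{w_1} p_z$ and $p_z-a$, noting that on every representation in $\mathrm{Spec}_{\leq 2}(C^\ast(G))$ the norms of these two elements cannot both exceed $\tfrac12$, so $\{\pi:\|\pi(a)\|>\tfrac12\}$ and $\{\pi:\|\pi(p_z-a)\|>\tfrac12\}$ are disjoint open neighborhoods of $\pi_{e_1}$ and $\pi_{e_2}$.
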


\begin{proof}
    Let $H \in \mathcal{G}$ be fixed. One checks that the set $X_H$ consists of all $1$- and $2$-dimensional irreducible representations of $C^\ast(G)$ that vanish on the projections $p_x$ for $x \not \in H$. Let $I_H \subset C^\ast(G)$ be the (closed) ideal generated by these projections $p_x$ with $x \not\in H$. The irreducible representations that vanish on $I_H$ form a closed subset of $\mathrm{Spec}(C^\ast(G))$ and are in a 1-1 correspondence with the irreducible representations of the quotient $C^\ast(G) / I_H$ via
    \begin{align*}
        \mathrm{Spec}(C^\ast(G) / I_H) \to \{\pi \in \mathrm{Spec}(C^\ast(G)) \mid \pi(p_x) = 0 \text{ for all } x \not \in H\}, \quad \pi \mapsto \pi \circ \iota,
    \end{align*}
    where $\iota: C^\ast(G) \to C^\ast(G) / I_H$ is the canonical quotient map. Moreover, the map $\pi \to \pi \circ \iota$ is a homeomorphism of topological spaces, see e.g. \cite[II.6.1.3, II.6.5.13]{blackadar_operator_2006}. Since, the map also preserves the dimension of the representations, we get a homeomorphism
    \begin{align*}
        \mathrm{Spec}_{\leq 2}(C^\ast(G) / I_H) \to \{ \pi \in \mathrm{Spec}_{\leq 2}(C^\ast(G)) \mid \pi(p_x) = 0 \text{ for all } x \not \in H\} = X_H,
    \end{align*}
    where $X_H \subset \mathrm{Spec}_{\leq 2}(C^\ast(G))$ is a closed subset. 
    By Proposition \ref{bga::proposition:algebra_of_subgraphs} the algebra $C^\ast(G) / I_H$ is isomorphic to $C^\ast(H) \cong C^\ast(K_{2,2})$. For this algebra we computed $\mathrm{Spec}_{\leq 2}(C^\ast(K_{2,2})) \cong X$ in Corollary \ref{reps::cor:X_is_Spec(K22)}.

    To prove the second statement, let us note that the sets $\{\sigma_{H,t} \mid 0 < t < 1\}$ are pairwise disjoint for different $H \in \mathcal{G}$ and isomorphic to $(0,1)$ because of $X_H \cong X$. For
    \begin{align*}
        \bigcup_{H \in \mathcal{G}} \{\sigma_{H,t} \mid 0 < t < 1\} \cong \coprod_{H \in \mathcal{G}} (0,1),
    \end{align*}
    it suffices to show that $\{\sigma_{H,t} \mid 0 < t < 1\}$ is closed in the subspace topology of $\bigcup_{H \in \mathcal{G}} \{\sigma_{H,t} \mid 0 < t < 1\}$. However, this follows immediately from the fact that $X_H \subset \mathrm{Spec}_{\leq 2}(C^\ast(G))$ is closed. Thus, the map
    \begin{align*}
        \bigcup_{H \in \mathcal{G}} \{\sigma_{H,t} \mid 0 < t < 1\} \to \coprod_{H \in \mathcal{G}} (0,1), \;
        \sigma_{H, t} \mapsto t_H
    \end{align*}
    is a homeomorphism, where for every $t \in (0,1)$ we denote by $t_H$ the $H$-labeled copy of $t$ in the disjoint union $\coprod_{H \in \mathcal{G}} (0,1)$. 

    It remains to prove (a)--(c). For (a), let $e \in E$ be an edge that is not contained in any $H \in \mathcal{G}$, and let $G^\prime$ be the graph obtained from $G$ by deleting the edge $e$. From Corollary \ref{bga::lemma:loose_edge_gives_direct_summand_C} one gets
    \begin{align*}
        C^\ast(G) \cong C^\ast(G^\prime) \oplus \C,
    \end{align*}
    and $\pi_e$ corresponds to the representation
    \begin{align*}
        C^\ast(G^\prime) \oplus \C \ni (x,y) \mapsto y \in \C.
    \end{align*}
    It follows immediately that $\{\pi_e\} \subset \mathrm{Spec}_{\leq 2}(C^\ast(G))$ is clopen.

    For (b) let $H = G(e_1, e_2, e_3, e_4) \in \mathcal{G}$. After identifying $C^\ast(H)$ with $C^\ast(G) / I_H$ the above yields a homeomorphism
    \begin{align*}
        \mathrm{Spec}_{\leq 2}(C^\ast(H)) &\to X_H \subset \mathrm{Spec}_{\leq 2}(C^\ast(G)), \\
        \pi &\mapsto \pi \circ \iota,
    \end{align*}
    where the quotient map $\iota: C^\ast(G) \to C^\ast(H)$ is given by
    \begin{align*}
        \iota(p_x) = 
        \begin{cases}
            \hat{p}_x & \text{if } x \in H, \\
            0 & \text{if } x \not\in H.
        \end{cases}
    \end{align*}
    To avoid confusion we denote the generators of $C^\ast(H)$ by $\hat{p}_x$ for $x \in H$. Thus, for the edges $e_i$ the homeomorphism maps
    \begin{align*}
        \hat{\pi}_{e_i} \mapsto \pi_{e_i},
    \end{align*}
    where $\hat{\pi}_{e_i}$ is the one-dimensional irreducible representation of $C^\ast(H)$ corresponding to the edge $e_i$ according to Lemma \ref{reps::lemma:representations_vs_edges_and_subgraphs}. As $H \cong K_{2,2}$, Lemma \ref{reps::lemma:adjacency_of_K22edges_and_Spec(C*(K22))} tells us that $e_1$ and $e_2$ are adjacent in $H$ if and only if $\hat{\pi}_{e_1}$ and $\hat{\pi}_{e_2}$ have disjoint open neighborhoods in $\mathrm{Spec}_{\leq 2}(C^\ast(H))$. It follows that $\pi_{e_1}$ and $\pi_{e_2}$ have disjoint open neighborhoods in $\mathrm{Spec}_{\leq 2}(C^\ast(G))$ if and only if $e_1$ and $e_2$ are adjacent in $H$. Evidently, this is equivalent to the fact that $e_1$ and $e_2$ are adjacent in $G$, which concludes the proof of (b).

    Finally, for (c) let $H \in \mathcal{G}$ and $0 < t < 1$. It is not hard to check that $\{\sigma_{H,t} \mid 0 < t < 1\}$ is an open neighborhood of $\sigma_{H,t}$ in $\mathrm{Spec}_{\leq 2}(C^\ast(G))$ which is homeomorphic to $(0,1)$.
\end{proof}

In the next lemma we obtain a bijection between the edges of two graphs $G$ and $G^\prime$ starting from a homeomorphism between $\mathrm{Spec}_{\leq 2}(C^\ast(G))$ and $\mathrm{Spec}_{\leq 2}(C^\ast(G^\prime))$. This will be used in the next section to obtain a $\ast$-isomorphism between $C^\ast(G)$ and $C^\ast(G^\prime)$.

\begin{lemma}
    \label{reps::lemma:edge_bijection_f_from_homeomorphism_of_spectrum}
    Let $G$ and $G^\prime$ be two bipartite graphs, and assume that
    \begin{align*}
        \Phi: \mathrm{Spec}_{\leq 2}(C^\ast(G)) \to \mathrm{Spec}_{\leq 2}(C^\ast(G^\prime))
    \end{align*}
    is a homeomorphism. 
    There is a unique bijection $f: E \to E^\prime$ given by $\Phi(\pi_e) = \pi_{f(e)}$ for all edges $e \in E$.
    This map satisfies for all edges $e_1, e_2, e_3, e_4 \in E$ 
    \begin{align}
        \label{reps::eq:f_preserves_K22_subgraphs}
        G(e_1, e_2, e_3, e_4) \cong K_{2,2}
        \quad\Leftrightarrow\quad
        G^\prime(f(e_1), f(e_2), f(e_3), f(e_4)) \cong K_{2,2}.
    \end{align}
    Further, whenever $G(e_1, e_2, e_3, e_4) \cong K_{2,2}$, then one has for distinct $i, j \leq 4$ 
    \begin{align}
        \label{reps::eq:f_preserves_adjacency}
        e_i \text{ and } e_j \text{ are adjacent} \quad\Leftrightarrow\quad f(e_i) \text{ and } f(e_j) \text{ are adjacent}.
    \end{align}
\end{lemma}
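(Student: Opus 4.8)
The plan is to recover the edge bijection from the topology and then transport the combinatorial data recorded in Lemmas \ref{reps::lemma:representations_vs_edges_and_subgraphs} and \ref{reps::lemma:Spec<=2(C*G)} across $\Phi$. \emph{Step 1 (constructing $f$).} The crucial point is a purely topological characterization of the one-dimensional representations: I claim that a point $\pi \in \mathrm{Spec}_{\leq 2}(C^\ast(G))$ is one-dimensional if and only if either $\{\pi\}$ is open, or there exists $\rho \neq \pi$ such that $\pi$ and $\rho$ have no disjoint open neighbourhoods. For the ``if'' direction among the $\pi_e$: if $e$ lies in no $H \in \mathcal{G}$ then $\{\pi_e\}$ is open by Lemma \ref{reps::lemma:Spec<=2(C*G)}(a); if $e$ lies in some $H \in \mathcal{G}$, then for the unique edge $e^c$ of $H$ not adjacent to $e$ the representations $\pi_e, \pi_{e^c}$ have no disjoint open neighbourhoods by Lemma \ref{reps::lemma:Spec<=2(C*G)}(b). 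For the ``only if'' direction I must show that no $\sigma_{H,t}$ satisfies the property: it is not isolated since it has a neighbourhood homeomorphic to $(0,1)$ (Lemma \ref{reps::lemma:Spec<=2(C*G)}(c)), and it can be separated from any other point $\rho$ by taking a small subinterval $U \ni \sigma_{H,t}$ of that neighbourhood whose closure in $\mathrm{Spec}_{\leq 2}(C^\ast(G))$ is a compact subinterval contained in $\{\sigma_{H,s}\mid 0<s<1\}$ (here one uses that $X_H$ is closed and that inside $X_H \cong X$ the closure of such a subinterval avoids the four one-dimensional points), and separating by $U$ and the complement of $\overline{U}$. Since $\Phi$ is a homeomorphism it preserves this characterization, hence maps $\{\pi_e\mid e \in E\}$ onto $\{\pi_{e'}\mid e'\in E'\}$; as $e \mapsto \pi_e$ is a bijection onto the former set (Lemma \ref{reps::lemma:representations_vs_edges_and_subgraphs}(a)), this produces a unique bijection $f\colon E\to E'$ with $\Phi(\pi_e)=\pi_{f(e)}$.

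\emph{Step 2 (preservation of $K_{2,2}$-subgraphs).} Since $\Phi$ matches up the one-dimensional representations, it restricts to a homeomorphism between the subspaces of two-dimensional representations of $C^\ast(G)$ and $C^\ast(G')$, which by Lemma \ref{reps::lemma:Spec<=2(C*G)} are homeomorphic to $\coprod_{H\in\mathcal{G}}(0,1)$ and $\coprod_{H'\in\mathcal{G}'}(0,1)$. Hence their connected components are exactly the sets $S_H := \{\sigma_{H,t}\mid 0<t<1\}$, so $\Phi(S_H)=S_{H'}$ for some $H' \in \mathcal{G}'$. Because $X_H$ is closed and $S_H$ is dense in $X_H\cong X$, we get $\overline{S_H}=X_H$ in $\mathrm{Spec}_{\leq 2}(C^\ast(G))$, whence $\Phi(X_H\setminus S_H)=X_{H'}\setminus S_{H'}$; that is, $\Phi$ sends $\{\pi_e\mid e \text{ an edge of } H\}$ onto $\{\pi_{e'}\mid e' \text{ an edge of } H'\}$, so $f$ maps the edge set of $H$ onto the edge set of $H'$. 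Consequently, if $G(e_1,e_2,e_3,e_4)\cong K_{2,2}$, i.e.\ $H:=G(e_1,\dots,e_4)\in\mathcal{G}$, then $\{f(e_1),\dots,f(e_4)\}$ is the edge set of $H'\cong K_{2,2}$, so $G'(f(e_1),\dots,f(e_4))\cong K_{2,2}$; the converse follows by applying the same argument to $\Phi^{-1}$ and $f^{-1}$. This is \eqref{reps::eq:f_preserves_K22_subgraphs}.

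\emph{Step 3 (preservation of adjacency).} Assume $G(e_1,e_2,e_3,e_4)\cong K_{2,2}$ and $i\neq j$. By Step 2 the edges $f(e_i),f(e_j)$ both lie in $H'=G'(f(e_1),\dots,f(e_4))\in\mathcal{G}'$, so Lemma \ref{reps::lemma:Spec<=2(C*G)}(b) applies both in $G$ and in $G'$; combining it with the fact that the homeomorphism $\Phi$ preserves the property of having disjoint open neighbourhoods, we obtain $e_i, e_j \text{ adjacent} \iff \pi_{e_i}, \pi_{e_j} \text{ have disjoint open neighbourhoods} \iff \pi_{f(e_i)}, \pi_{f(e_j)} \text{ have disjoint open neighbourhoods} \iff f(e_i), f(e_j) \text{ adjacent}$, which is \eqref{reps::eq:f_preserves_adjacency}.

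The only genuinely delicate point in the whole argument is the ``only if'' half of the characterization in Step 1 — checking that every two-dimensional representation can be separated from all other points of $\mathrm{Spec}_{\leq 2}(C^\ast(G))$, so that the topological property really singles out the one-dimensional representations — and this rests squarely on the local description of $\mathrm{Spec}_{\leq 2}(C^\ast(G))$ provided by Lemma \ref{reps::lemma:Spec<=2(C*G)} (in particular that distinct $S_H$ are ``far apart'' and that each $X_H$ is closed). Everything after Step 1 is a routine transport of Lemmas \ref{reps::lemma:representations_vs_edges_and_subgraphs} and \ref{reps::lemma:Spec<=2(C*G)} through $\Phi$.
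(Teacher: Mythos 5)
Your proof is correct and follows essentially the same route as the paper's: distinguish the one-dimensional points $\pi_e$ from the two-dimensional points $\sigma_{H,t}$ topologically to get $f$, transport the connected components of $\coprod_{H}(0,1)$ and the closed sets $X_H$ through $\Phi$ to get \eqref{reps::eq:f_preserves_K22_subgraphs}, and use the disjoint-neighbourhood criterion of Lemma \ref{reps::lemma:Spec<=2(C*G)}(b) for \eqref{reps::eq:f_preserves_adjacency}. The only difference is that you make explicit (and correctly verify, via the closed-interval separation argument inside $X_H$) the characterization of one-dimensional points that the paper merely asserts with ``can be distinguished by their topological properties''.
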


\begin{proof}
    Due to Lemma \ref{reps::lemma:Spec<=2(C*G)}(a)--(c) the representations $\pi_e$ for $e \in E$ and $\sigma_{H,t}$ for $H \in \mathcal{G}$ and $0 < t < 1$ can be distinguished by their topological properties. Therefore, the map $\Phi$ satisfies
    \begin{align*}
        \Phi\left( \{ \pi_e \mid e \in E\} \right) &= \{ \pi_{e^\prime} \mid e^\prime \in E^\prime\}, \\
        \Phi\left( \{ \sigma_{H,t} \mid H \in \mathcal{G}, 0 < t < 1\} \right) &= \{ \sigma_{H^\prime, t} \mid H^\prime \in \mathcal{G}^\prime, 0 < t < 1\}.
    \end{align*}
    It follows that the map $f$ is a well-defined bijection from $E$ to $E^\prime$. It remains to prove that $f$ satisfies \eqref{reps::eq:f_preserves_K22_subgraphs} and \eqref{reps::eq:f_preserves_adjacency}.

    For that, let $X_H \subset \mathrm{Spec}_{\leq 2}(C^\ast(G))$ be the closed subset corresponding to a subgraph $H \in \mathcal{G}$ as in Lemma \ref{reps::lemma:Spec<=2(C*G)}. We claim that for every $H \in \mathcal{G}$ there is an $H^\prime \in \mathcal{G}^\prime$ such that $\Phi(X_H) = X_{H^\prime}$, and conversely for every $H^\prime \in \mathcal{G}$ there is an $H \in \mathcal{G}$ with this property.
    
    Indeed, $X_H$ is the closure of $\{\sigma_{H,t} \mid 0 < t < 1\}$ and by the previous lemma we have 
    \begin{align*}
        \bigcup_{H \in \mathcal{G}} \{\sigma_{H,t} \mid 0 < t < 1\} & \cong \coprod_{H \in \mathcal{G}} (0,1), \\
        \bigcup_{H^\prime \in \mathcal{G}^\prime} \{\sigma_{H^\prime, t} \mid 0 < t <1\} & \cong \coprod_{H^\prime \in \mathcal{G}^\prime} (0,1),
    \end{align*}
    where the homeomorphism maps different sets $\{\sigma_{H,t} \mid 0 < t < 1\}$ for $H \in \mathcal{G}$ (or $\{\sigma_{H^\prime, t} \mid 0 < t < 1\}$ for $H^\prime \in \mathcal{G}^\prime$, resp.) onto different copies of $(0,1)$.
    As a homeomorphism $\Phi$ preserves connected components, and it follows that for every $H \in \mathcal{G}$ there exists a unique $H^\prime \in \mathcal{G}^\prime$ such that $\Phi\left( \{ \sigma_{H,t} \mid 0 < t < 1\} \right) = \{ \sigma_{H^\prime, t} \mid 0 < t < 1\}$. By taking the closure we obtain the claim. The same argument applied on $\Phi^{-1}$ shows that for every $H^\prime \in \mathcal{G}^\prime$ there is a unique $H \in \mathcal{G}$ such that $\Phi(X_H) = X_{H^\prime}$.

    Now, assume that $H = G(e_1, e_2, e_3, e_4) \in \mathcal{G}$. Then one has
    \begin{align*}
        \Phi(X_H) &= X_{H^\prime}
    \end{align*}
    for some $H^\prime = G^\prime(e_1^\prime, e_2^\prime, e_3^\prime, e_4^\prime) \in \mathcal{G}^\prime$. It follows
    \begin{align*}
        \Phi&\left( \{ \pi_{e_1}, \pi_{e_2}, \pi_{e_3}, \pi_{e_4} \} \sqcup \{ \sigma_{H,t} \mid 0 < t < 1\} \right) \\
            &= \{ \pi_{e_1^\prime}, \pi_{e_2^\prime}, \pi_{e_3^\prime}, \pi_{e_4^\prime} \} \sqcup \{ \sigma_{H^\prime, t} \mid 0 < t < 1\}
    \end{align*}
    and thus
    \begin{align*}
        \{f(e_1), f(e_2), f(e_3), f(e_4)\} = \{e_1^\prime, e_2^\prime, e_3^\prime, e_4^\prime\}.
    \end{align*}
    This proves the forward implication in \eqref{reps::eq:f_preserves_K22_subgraphs}. The converse implication is proved in the same way using that for every $H^\prime \in \mathcal{G}^\prime$ there is a unique $H \in \mathcal{G}$ such that $\Phi(X_H) = X_{H^\prime}$.

    Finally, let $e_1, e_2 \in E$ be two distinct edges in some $H \in \mathcal{G}$. Then $e_1$ and $e_2$ are adjacent if and only if $\pi_{e_1}$ and $\pi_{e_2}$ have disjoint open neighborhoods in $\mathrm{Spec}_{\leq 2}(C^\ast(G))$ by Lemma \ref{reps::lemma:Spec<=2(C*G)}(b). Similarly, $f(e_1)$ and $f(e_2)$ are adjacent if and only if $\pi_{f(e_1)}$ and $\pi_{f(e_2)}$ have disjoint open neighborhoods in $\mathrm{Spec}_{\leq 2}(C^\ast(G^\prime))$. Since $\Phi$ is a homeomorphism we have that $\pi_{f(e_1)} = \Phi(\pi_{e_1})$ and $\pi_{f(e_2)} = \Phi(\pi_{e_2})$ have disjoint open neighborhoods in $\mathrm{Spec}_{\leq 2}(C^\ast(G^\prime))$ if and only if $\pi_{e_1}$ and $\pi_{e_2}$ have disjoint open neighborhoods in $\mathrm{Spec}_{\leq 2}(C^\ast(G))$. This proves \eqref{reps::eq:f_preserves_adjacency}.
\end{proof}

\section{Classification of bipartite graph C*-algebras}
\label{sec::classification}

Finally, everything is ready to show that $\mathrm{Spec}_{\leq 2}(C^\ast(G))$ determines $C^\ast(G)$ up to isomorphism. Throughout this section let $G = (U, V, E)$ and $G^\prime = (U^\prime, V^\prime, E^\prime)$ be two fixed bipartite graphs together with a homeomorphism
$$
    \Phi: \mathrm{Spec}_{\leq 2}(C^\ast(G)) \to \mathrm{Spec}_{\leq 2}(C^\ast(G^\prime)).
$$
As seen in Lemma \ref{reps::lemma:edge_bijection_f_from_homeomorphism_of_spectrum}, the homeomorphism $\Phi$ yields a bijection $f: E \to E^\prime$ between the respective edge sets of $G$ and $G^\prime$ such that for any edges $e_1, e_2, e_3, e_4 \in E$ we have 
\begin{align}
    \tag{\ref{reps::eq:f_preserves_K22_subgraphs}}
    G(e_1, e_2, e_3, e_4) \cong K_{2,2}
    \quad\Leftrightarrow\quad
    G^\prime(f(e_1), f(e_2), f(e_3), f(e_4)) \cong K_{2,2}.
\end{align}
Additionally, if $G(e_1, e_2, e_3, e_4) \cong K_{2,2}$, then for $i, j \leq 4$ one has
\begin{align}
    \tag{\ref{reps::eq:f_preserves_adjacency}}
    e_i \text{ and } e_j \text{ are adjacent} \quad\Leftrightarrow\quad f(e_i) \text{ and } f(e_j) \text{ are adjacent}.
\end{align}

In this section, we prove that $f$ gives rise to a $\ast$-isomorphism $\varphi: C^\ast(G) \to C^\ast(G^\prime)$. First, we show that any map $f: E \to E^\prime$ satisfying \eqref{reps::eq:f_preserves_K22_subgraphs} and \eqref{reps::eq:f_preserves_adjacency} induces a $\ast$-homormophism $\varphi_f: C^\ast(G) \to C^\ast(G^\prime)$.

\begin{proposition}
    \label{class::prop:construct_isomorphism_phi_from_f}
    Let $G, G^\prime$ be two bipartite graphs and let $f: E \to E^\prime$ be a bijection between their edges which satisfies \eqref{reps::eq:f_preserves_K22_subgraphs} and \eqref{reps::eq:f_preserves_adjacency}. Then there is a (unique) $\ast$-homomorphism $\varphi_f: C^\ast(G) \to C^\ast(G^\prime)$ such that for every vertex $x \in U \cup V$ of $G$ one has
    \begin{align*}
        \varphi_f(p_x) = \sum_{\{u^\prime, v^\prime\} \in I(x)} \hat{p}_{u^\prime} \hat{p}_{v^\prime} =: P_x \in C^\ast(G^\prime),
    \end{align*}
    where 
    $$
        I(x) := \{f(e) \in E^\prime: x \in e\}
    $$
    and $\{u^\prime, v^\prime\} \in E^\prime$ implicitly requires that $u^\prime \in U^\prime$ and $v^\prime \in V^\prime$. Further, we denote the generators of $C^\ast(G^\prime)$ by $\hat{p}_y$ with $y \in U^\prime \cup V^\prime$ to avoid confusion with the generators $p_x$ with $x \in U \cup V$ of $C^\ast(G)$.
\end{proposition}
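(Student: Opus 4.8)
The plan is to use the universal property of $C^\ast(G)$: we need to verify that the elements $P_x \in C^\ast(G^\prime)$ are projections satisfying the defining relations \eqref{bga::eq:partition_relation} and \eqref{bga::eq:orthogonality_relation} of $C^\ast(G)$. Since all the elements $\hat p_{u^\prime}\hat p_{v^\prime}$ with $\{u^\prime,v^\prime\}\in E^\prime$ are exactly the alternative generators $x_e$ from Proposition \ref{bga::proposition:alternative_definition}, it is natural to recast everything in terms of the relations \eqref{bga::eq:gc1}--\eqref{bga::eq:gc4}. In that language, for $x\in U$ one has $P_x=\sum_{e^\prime\in I(x)}x_{e^\prime}$ and for $x\in V$ one has $P_x=\sum_{e^\prime\in I(x)}x_{e^\prime}$ as well (writing $x_{e^\prime}=\hat p_{u^\prime}\hat p_{v^\prime}$); the argument that $P_x=P_x^\ast=P_x^2$ can then be copied almost verbatim from the proof of Proposition \ref{bga::proposition:alternative_definition}, where exactly the same sums $P_x=\sum_{e\ni x}x_e$ were shown to be projections. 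The crucial difference is that here $I(x)$ is \emph{not} the full set of edges through a vertex of $G^\prime$; it is the $f$-image of the edges of $G$ through $x$. So the steps that genuinely use the combinatorics of $f$ are: (i) for each vertex $x$ of $G$, the edges in $I(x)$ form a ``star'' in $G^\prime$, i.e.\ they pairwise share a common vertex lying always on the same side; (ii) this common vertex is the \emph{same} for all edges in $I(x)$; and (iii) for non-adjacent $u\in U$, $v\in V$ one has $I(u)\cap\{\text{edges meeting }I(v)\}=\emptyset$ in the appropriate sense, so that $P_uP_v=0$.

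The first key step is therefore a purely graph-theoretic lemma: if $e_1,\dots,e_k$ are the edges of $G$ through a vertex $x$ (say $x\in U$), then $f(e_1),\dots,f(e_k)$ all contain a common vertex $u^\prime\in U^\prime$. I would prove this by induction on $k$, using \eqref{reps::eq:f_preserves_K22_subgraphs} and \eqref{reps::eq:f_preserves_adjacency}. For $k\le 2$ it is immediate (two edges of $G$ through $x$ are non-adjacent only in the sense that they share $x$; one checks $f$ preserves the relevant adjacency because any two edges of $G$ through a common vertex either lie in a common $K_{2,2}$ or can be shown adjacent by a direct argument). The inductive step: given that $f(e_1),\dots,f(e_{k-1})$ share a vertex $u^\prime$, consider $f(e_k)$; using that $e_k$ is adjacent to $e_1$ and to $e_2$ in $G$ via the vertex $x$, together with the $K_{2,2}$-detection property, one forces $f(e_k)$ to also pass through $u^\prime$ (the alternative, that $f(e_k)$ meets $f(e_1)$ and $f(e_2)$ at two \emph{different} vertices, would create a $K_{2,2}$-configuration among $e_1,e_2,e_k$ and a ``$\Sigma$-shape'', contradicting \eqref{reps::eq:f_preserves_K22_subgraphs} applied with a fourth edge). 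This is the step I expect to be the main obstacle, since it requires carefully enumerating the few combinatorial possibilities for how three or four edges can sit in $G$ and $G^\prime$ and which of them are $K_{2,2}$'s.

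Once this ``star-preservation'' is established, the remaining verifications are routine. For relation \eqref{bga::eq:partition_relation}: if $x$ ranges over $U$, the sets $I(x)$ partition $E^\prime$ (each edge $e^\prime=f(e)\in E^\prime$ lies in exactly one $I(u)$, namely for the unique $u\in U$ with $u\in e$), so $\sum_{u\in U}P_u=\sum_{e^\prime\in E^\prime}\hat p_{u^\prime}\hat p_{v^\prime}=\sum_{e^\prime\in E^\prime}x_{e^\prime}=1$ by \eqref{bga::eq:gc3}/\eqref{bga::eq:gc4}; similarly with $V$. That $P_x$ is a projection follows from the star property: all $x_{e^\prime}$ with $e^\prime\in I(x)$ share a left vertex $u^\prime$, so $P_x=\hat p_{u^\prime}\bigl(\sum_{e^\prime\in I(x)}\hat p_{v^\prime_{e^\prime}}\bigr)$ is $\hat p_{u^\prime}$ times a subsum of a partition of unity, hence a projection — and the computation is exactly that of Proposition \ref{bga::proposition:alternative_definition}. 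For \eqref{bga::eq:orthogonality_relation}: if $u\in U$, $v\in V$ with $\{u,v\}\notin E$, write $P_uP_v$ as a sum of products $\hat p_{u^\prime_1}\hat p_{v^\prime_1}\hat p_{u^\prime_2}\hat p_{v^\prime_2}$ over $\{u^\prime_1,v^\prime_1\}\in I(u)$, $\{u^\prime_2,v^\prime_2\}\in I(v)$; such a term survives only if $v^\prime_1=v^\prime_2$, i.e.\ the edges $f^{-1}(\{u^\prime_1,v^\prime_1\})\ni u$ and $f^{-1}(\{u^\prime_2,v^\prime_2\})\ni v$ are adjacent in $G^\prime$; but then they are both adjacent in $G$ (one would invoke \eqref{reps::eq:f_preserves_adjacency} after exhibiting a common $K_{2,2}$, or handle it directly), forcing these edges to share a vertex, which — since they already contain $u$ and $v$ respectively and $u\not\sim v$ — is impossible. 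Hence every term vanishes and $P_uP_v=0$. Finally, uniqueness of $\varphi_f$ is automatic from the universal property since the $p_x$ generate $C^\ast(G)$. I would present the star-preservation lemma first as a separate numbered lemma, then assemble the above into the proof of the proposition.
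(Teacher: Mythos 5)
Your overall strategy (show that the elements $P_x$ satisfy \eqref{bga::eq:partition_relation} and \eqref{bga::eq:orthogonality_relation} and then invoke the universal property of $C^\ast(G)$) is the same as the paper's, and your computation of $\sum_{u\in U}P_u=1$ is fine. However, the combinatorial lemma your proof hinges on --- that for every vertex $x$ the edges in $I(x)$ all pass through one common vertex of $G^\prime$ (``star preservation'') --- is false under the stated hypotheses, so your argument that each $P_x$ is a projection collapses. Concretely, let $G$ have $U=\{u\}$, $V=\{v_1,v_2\}$ with both edges $\{u,v_1\},\{u,v_2\}$, and let $G^\prime$ consist of two disjoint edges $\{u_1^\prime,v_1^\prime\}$ and $\{u_2^\prime,v_2^\prime\}$. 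Neither graph contains a $K_{2,2}$, so any bijection $f$ between the edge sets satisfies \eqref{reps::eq:f_preserves_K22_subgraphs} and \eqref{reps::eq:f_preserves_adjacency} vacuously, yet $I(u)=\{\{u_1^\prime,v_1^\prime\},\{u_2^\prime,v_2^\prime\}\}$ has no common vertex. (The proposition still holds there: $P_u=\hat p_{u_1^\prime}\hat p_{v_1^\prime}+\hat p_{u_2^\prime}\hat p_{v_2^\prime}$ is a projection because the two summands are mutually orthogonal projections, not because of any star structure.) Your induction already fails at $k=2$: two edges of $G$ meeting at $x$ need not lie in any common $K_{2,2}$, and in that case the hypotheses on $f$ say nothing at all about their images --- adjacency is only controlled \emph{inside} $K_{2,2}$ subgraphs.

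The paper's proof avoids any global claim about $I(x)$: it expands $P_xP_x^\ast$ into terms $\hat p_{u^\prime}\hat p_{v^\prime}\hat p_{u^{\prime\prime}}$ and shows the sum collapses case by case, using (a) Lemma \ref{bga::lemma:nontrivial_p_x2 p_y p_x2_and_K22_subgraph}, by which a nonvanishing product $\hat p_{u^\prime}\hat p_{v^\prime}\hat p_{u^{\prime\prime}}$ with $u^\prime\neq u^{\prime\prime}$ forces a $K_{2,2}$ in $G^\prime$, so that \eqref{reps::eq:f_preserves_K22_subgraphs} and \eqref{reps::eq:f_preserves_adjacency} become applicable exactly where they are needed, and (b) the much weaker combinatorial Lemma \ref{class::lemma:properties_of_I(x)}: if $u^\prime$ is a common neighbor of $v_1^\prime$ and $v_2^\prime$ with $\{u^\prime,v_1^\prime\}\in I(x)$ and $\{u^\prime,v_2^\prime\}\notin I(x)$, then the same dichotomy holds for every common neighbor. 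Some statement of this kind, not star preservation, is what your argument would need. The same issue affects your orthogonality step: adjacency of two edges of $G^\prime$ does not by itself imply adjacency of their $f^{-1}$-images; one must first produce a $K_{2,2}$ (the paper again extracts it from the nonvanishing of $\hat p_{u^\prime}\hat p_{v^\prime}\hat p_{u^{\prime\prime}}$). Moreover, the reduction of $P_uP_v$ to terms with $v_1^\prime=v_2^\prime$ only works after writing $P_uP_v=P_uP_v^\ast$, i.e.\ after self-adjointness of $P_v$ is already established, which your sketch presupposes.
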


To prove this, we need to show that the $P_x$ are projections which satisfy the relations \eqref{bga::eq:partition_relation} and \eqref{bga::eq:orthogonality_relation} required for the generators of $C^\ast(G)$, i.e.
\begin{itemize}
    \item every $P_x$ with $x \in U \cup V$ is a projection  in $C^\ast(G^\prime)$,
    \item it is $\sum_{u \in U} P_u = 1 = \sum_{v \in V} P_v$, and
    \item we have $P_x \perp P_y$ unless $x \sim y$.
\end{itemize}
We will prove these properties in the next lemmas separately. Throughout the next lemmas, $G, G^\prime$ and $f$ are fixed as in the statement of Proposition \ref{class::prop:construct_isomorphism_phi_from_f}. Further, $x \in U \cup V$ is a fixed vertex in $G$ and $I(x) = \{f(e) \mid x \in e\}$ is the set from Proposition \ref{class::prop:construct_isomorphism_phi_from_f}. Let us start with a technical lemma.

\begin{lemma} 
    \label{class::lemma:properties_of_I(x)}
    Let $v_1^\prime, v_2^\prime \in V^\prime$ be two distinct vertices of $G^\prime$ with common neighbors $u_1^\prime, \dots, u_n^\prime \in U^\prime$. Assume
    \begin{align*}
        \{u_1^\prime, v_1^\prime\} \in I(x),
        \quad \text{and} \quad 
        \{u_1^\prime, v_2^\prime\} \not \in I(x).
    \end{align*}
    Then for all $i \leq n$ we have
    \begin{align*}
        \{u_i^\prime, v_1^\prime\} \in I(x),
        \quad \text{and} \quad 
        \{u_i^\prime, v_2^\prime\} \not \in I(x).
    \end{align*}
\end{lemma}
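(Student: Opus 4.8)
The plan is to reduce the statement to a combinatorial fact about the bijection $f$, applied to a single $K_{2,2}$ subgraph of $G^\prime$. Fix $i \leq n$; since the claim coincides with the hypothesis when $u_i^\prime = u_1^\prime$, I may assume $u_i^\prime \neq u_1^\prime$. As $u_1^\prime$ and $u_i^\prime$ are common neighbors of the distinct vertices $v_1^\prime, v_2^\prime \in V^\prime$, the induced subgraph $H^\prime := G^\prime(u_1^\prime, u_i^\prime, v_1^\prime, v_2^\prime)$ contains all four edges $e_1^\prime := \{u_1^\prime, v_1^\prime\}$, $e_2^\prime := \{u_1^\prime, v_2^\prime\}$, $e_3^\prime := \{u_i^\prime, v_1^\prime\}$, $e_4^\prime := \{u_i^\prime, v_2^\prime\}$, so $H^\prime \cong K_{2,2}$. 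Setting $e_j := f^{-1}(e_j^\prime)$, the converse direction of \eqref{reps::eq:f_preserves_K22_subgraphs} gives $G(e_1, e_2, e_3, e_4) \cong K_{2,2}$, and \eqref{reps::eq:f_preserves_adjacency} transports the adjacency type of $(e_1^\prime, e_2^\prime, e_3^\prime, e_4^\prime)$ to $(e_1, e_2, e_3, e_4)$: since the only non-adjacent pairs among $e_1^\prime, \dots, e_4^\prime$ are $\{e_1^\prime, e_4^\prime\}$ and $\{e_2^\prime, e_3^\prime\}$, the same holds for $e_1, \dots, e_4$.

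Next I would reconstruct the shape of the pulled-back square and read off the conclusion. From $e_1 \cap e_4 = e_2 \cap e_3 = \emptyset$ together with adjacency of every other pair, and using that two distinct edges of a bipartite simple graph meet in at most one vertex, one checks that $a := e_1 \cap e_2$, $c := e_1 \cap e_3$, $d := e_2 \cap e_4$, $b := e_3 \cap e_4$ are four distinct vertices with $e_1 = \{a, c\}$, $e_2 = \{a, d\}$, $e_3 = \{b, c\}$, $e_4 = \{b, d\}$, and with $\{a, b\}$, $\{c, d\}$ the two parts. Since $e_j^\prime \in I(x)$ is equivalent to $x \in e_j$, the hypotheses read $x \in e_1 = \{a, c\}$ and $x \notin e_2 = \{a, d\}$, hence $x \neq a$, which forces $x = c$. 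Then $x = c \in e_3$ gives $e_3^\prime = \{u_i^\prime, v_1^\prime\} \in I(x)$, while $x = c \notin \{b, d\} = e_4$ gives $e_4^\prime = \{u_i^\prime, v_2^\prime\} \notin I(x)$. As $i$ was arbitrary, this is precisely the claim.

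I expect the only delicate point to be the bookkeeping in the second step: verifying that the four intersection vertices $a, b, c, d$ are genuinely distinct and lie in the asserted parts, which requires invoking the emptiness of $e_1 \cap e_4$ and $e_2 \cap e_3$ several times. There is no analytic content; the argument uses nothing about $C^\ast(G)$ beyond the two purely graph-theoretic properties \eqref{reps::eq:f_preserves_K22_subgraphs} and \eqref{reps::eq:f_preserves_adjacency} of $f$ established before the lemma.
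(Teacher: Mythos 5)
Your proof is correct and follows essentially the same route as the paper: pull the square $G^\prime(u_1^\prime, u_i^\prime, v_1^\prime, v_2^\prime)$ back through $f^{-1}$ via \eqref{reps::eq:f_preserves_K22_subgraphs} and use \eqref{reps::eq:f_preserves_adjacency} to see which pulled-back edges can contain $x$. You are in fact a bit more explicit than the paper, which cites only \eqref{reps::eq:f_preserves_K22_subgraphs} and leaves the adjacency-preservation step (your identification $x = c$, ruling out the opposite edge) implicit.
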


\begin{proof}
    If $n \leq 1$ there is nothing to show. So assume $n > 1$ and let $i > 1$. The following is a subgraph of $G^\prime$
    \begin{center}
        \begin{tikzpicture}
            \draw[color=red, thick] (-1,2) -- (1, 2);
            \draw (-1,1) -- (1, 1);
            \draw (-1,2) -- (1, 1);
            \draw (-1,1) -- (1, 2);
            \node[label=90:$e_1$] at (0, 1.8) {};
            \node[label=-90:$e_4$] at (0, 1.2) {};
            \node[label=90:$e_2$] at (-.7, 1.3) {};
            \node[label=90:$e_3$] at (.7, 1.3) {};
            \node[label=180:$u_1^\prime$, fill=black, circle, inner sep=.05cm] at (-1, 2) {};
            \node[label=180:$u_i^{\prime}$, fill=black, circle, inner sep=.05cm] at (-1, 1) {};
            \node[label=0:$v^\prime_1$, fill=white, draw=black, circle, inner sep=.05cm] at (1, 2) {};
            \node[label=0:$v^\prime_2$, fill=white, draw=black, circle, inner sep=.05cm] at (1, 1) {};
        \end{tikzpicture}
    \end{center}
    where the edge $e_1$ (highlighted in red) is in $I(x)$. This means that $x \in f^{-1}(e_1)$. Since $f$ has property \eqref{reps::eq:f_preserves_K22_subgraphs} we know
    \begin{align*}
        G\left(f^{-1}(e_1), f^{-1}(e_2), f^{-1}(e_3), f^{-1}(e_4)\right) \cong K_{2,2}.
    \end{align*}
    Hence, either $e_2$ or $e_3$ is in $I(x)$ while $e_4 = \{u_i^\prime, v_2^\prime\} \not \in I(x)$. By assumption, however, we have $e_3 = \{u_1^\prime, v_2^\prime\} \not \in I(x)$. Therefore, it follows immediately $e_2 = \{u_i^\prime, v_1^\prime\} \in I(x)$ and this concludes the proof.
\end{proof}

\begin{lemma} 
    \label{class::lemma:Px_is_a_projections}
    The element $P_x = \sum_{\{u^\prime, v^\prime\} \in I(x)} \hat{p}_{u^\prime} \hat{p}_{v^\prime} \in C^\ast(G^\prime)$ from Proposition \ref{class::prop:construct_isomorphism_phi_from_f} is a projection. Further, the sets $\{P_u \mid u \in U\}$ and $\{P_v \mid v \in V\}$ form a partition of unity in $C^\ast(G^\prime)$, respectively, i.e. they satisfy relation \eqref{bga::eq:partition_relation}.
\end{lemma}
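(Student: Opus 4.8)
The goal is to show that each $P_x = \sum_{\{u',v'\} \in I(x)} \hat{p}_{u'}\hat{p}_{v'}$ is a projection and that the families $\{P_u\}_{u\in U}$ and $\{P_v\}_{v\in V}$ each sum to $1$. I would first record the easy observation that each summand $\hat{p}_{u'}\hat{p}_{v'}$ for $\{u',v'\}\in E'$ is exactly the element $X_{\{u',v'\}}$ appearing in the proof of Proposition~\ref{bga::proposition:alternative_definition}, so the $P_x$ are really sums of the alternative edge-generators of $C^\ast(G')$. That observation alone gives the partition-of-unity relation almost for free: summing $P_u$ over all $u\in U$ collects, with multiplicity, all pairs $\hat{p}_{u'}\hat{p}_{v'}$ with $\{u',v'\}=f(e)$ for some $e\in E$ containing some $u\in U$; since every edge $e$ of $G$ contains exactly one $U$-vertex, and $f$ is a bijection onto $E'$, this sum is $\sum_{\{u',v'\}\in E'}\hat{p}_{u'}\hat{p}_{v'} = \big(\sum_{v'\in V'}\hat{p}_{v'}\big)\big(\sum_{u'\in U'}\hat{p}_{u'}\big) = 1$ after using \eqref{bga::eq:orthogonality_relation} to kill cross terms, exactly as in the computation of $\psi$ in Proposition~\ref{bga::proposition:alternative_definition}. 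The same argument with $V$ in place of $U$ (each edge of $G$ contains exactly one $V$-vertex) gives $\sum_{v\in V}P_v=1$.

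**The projection property.** This is the substantive part. I would take $x\in U\cup V$ fixed, say $x\in U$ (the case $x\in V$ is symmetric, swapping the roles of $U'$ and $V'$ and of \eqref{bga::eq:gc1}/\eqref{bga::eq:gc2}). Write $I(x)=\{\{u'_k,v'_k\}\}_k$. Self-adjointness should follow by the same manipulation used to show $P_x^\ast=P_x$ in Proposition~\ref{bga::proposition:alternative_definition}: since $P_x=\sum_{e\in I(x)}X_e$ and $P_x^\ast=\sum_{e\in I(x)}X_e^\ast$, one uses $\sum_{e'\in E'}X_{e'}^\ast=1$ to insert the identity and regroups. For idempotency, expand $P_x^2 = \sum_{e,f\in I(x)} X_e X_f^\ast \cdot (\text{something})$ — more precisely, writing $e=\{u'_1,v'_1\}$ and $f=\{u'_2,v'_2\}$, the product $X_eX_f = \hat{p}_{u'_1}\hat{p}_{v'_1}\hat{p}_{u'_2}\hat{p}_{v'_2}$ is governed by Lemma~\ref{bga::lemma:nontrivial_p_x2 p_y p_x2_and_K22_subgraph}: it can only be nonzero when $v'_1$ and $u'_2$ have a common neighbor besides each other, which forces the four vertices to span a $K_{2,2}$ subgraph of $G'$. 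The crucial point is then that Lemma~\ref{class::lemma:properties_of_I(x)} controls precisely which edges of such a $K_{2,2}$ lie in $I(x)$: it says that the membership pattern of the four edges of a $K_{2,2}$ in $I(x)$ is ``rigid'' in the way it must be for a sum of the corresponding $X_e$ to be idempotent. So my plan is to reduce $P_x^2$, term by term, to a sum over pairs of edges sharing a $V'$-vertex (by \eqref{bga::eq:orthogonality_relation} the others vanish), and then on each $K_{2,2}$-block use Lemma~\ref{class::lemma:properties_of_I(x)} to match the surviving cross-terms against the diagonal terms.

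**The main obstacle.** The bookkeeping in $P_x^2$ is where the real work lies. Concretely, $P_x^2 = \sum_{e,f\in I(x)} \hat{p}_{u'_e}\hat{p}_{v'_e}\hat{p}_{u'_f}\hat{p}_{v'_f}$; by \eqref{bga::eq:partition_relation} the middle $\hat{p}_{v'_e}\hat{p}_{u'_f}$ forces nothing, but the outer structure means the sum collapses onto pairs $(e,f)$ with $v'_e=v'_f$ or with $v'_e,u'_f$ spanning a $K_{2,2}$. I expect the cleanest route is: fix a $V'$-vertex $v'$, let $S_{v'}=\{u'\in U' : \{u',v'\}\in I(x)\}$, and show using Lemma~\ref{class::lemma:properties_of_I(x)} that $Q_{v'}:=\big(\sum_{u'\in S_{v'}}\hat{p}_{u'}\big)\hat{p}_{v'}$ is a projection and that $P_x=\sum_{v'}Q_{v'}$ with the $Q_{v'}$ pairwise orthogonal (orthogonality being immediate from $\hat{p}_{v'}\perp\hat{p}_{v''}$). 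Checking $Q_{v'}^2=Q_{v'}$ amounts to showing $\hat{p}_{v'}\big(\sum_{u'\in S_{v'}}\hat{p}_{u'}\big)\hat{p}_{v'}\big(\sum_{u''\in S_{v'}}\hat{p}_{u''}\big)\hat{p}_{v'} = \hat{p}_{v'}\big(\sum_{u'\in S_{v'}}\hat{p}_{u'}\big)\hat{p}_{v'}$, and here the content is that whenever a cross term $\hat{p}_{v'}\hat{p}_{u'}\hat{p}_{w'}\hat{p}_{u''}\hat{p}_{v'}$ for some intermediate $V'$-vertex $w'$ is nonzero, Lemma~\ref{class::lemma:properties_of_I(x)} (applied to the $K_{2,2}$ on $u',u'',v',w'$, noting $u',u''\in S_{v'}$ means $\{u',w'\},\{u'',w'\}$ have consistent $I(x)$-membership) pins down exactly which such terms appear so they reassemble into $Q_{v'}$. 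The risk is that this requires a somewhat delicate induction or a careful case analysis on the intermediate vertices; the payoff of the $Q_{v'}$-decomposition is that it isolates all the combinatorics into a single application of Lemma~\ref{class::lemma:properties_of_I(x)} per block.
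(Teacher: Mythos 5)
Your treatment of the partition-of-unity part is correct and is essentially the paper's argument (sum over all of $E^\prime$, then use \eqref{bga::eq:orthogonality_relation} and \eqref{bga::eq:partition_relation}). The problem is the core of your plan for the projection property: the claim that each $Q_{v^\prime}=\bigl(\sum_{u^\prime\in S_{v^\prime}}\hat{p}_{u^\prime}\bigr)\hat{p}_{v^\prime}$ is a projection and that the $Q_{v^\prime}$ are pairwise orthogonal is false. Take $G=G^\prime=K_{2,2}$, $f=\mathrm{id}$ (which satisfies \eqref{reps::eq:f_preserves_K22_subgraphs} and \eqref{reps::eq:f_preserves_adjacency}) and $x=u_1$; then $I(x)=\{\{u_1,v_1\},\{u_1,v_2\}\}$, so $Q_{v_1}=\hat{p}_{u_1}\hat{p}_{v_1}$ and $Q_{v_2}=\hat{p}_{u_1}\hat{p}_{v_2}$. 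In $C^\ast(K_{2,2})\cong C^\ast(p,q)$ these are products of non-commuting projections: neither is self-adjoint nor idempotent, and $Q_{v_1}Q_{v_2}=\hat{p}_{u_1}\hat{p}_{v_1}\hat{p}_{u_1}\hat{p}_{v_2}\neq 0$ (only $Q_{v_1}Q_{v_2}^\ast=0$ follows from $\hat{p}_{v_1}\perp\hat{p}_{v_2}$, which is not enough). Yet $P_{u_1}=Q_{v_1}+Q_{v_2}=\hat{p}_{u_1}$ is a projection -- precisely because the cross-terms between different $v^\prime$-blocks are essential; the projection property is not blockwise in $v^\prime$, so no amount of case analysis via Lemma \ref{class::lemma:properties_of_I(x)} will make the $Q_{v^\prime}$-decomposition work. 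Relatedly, your assertion that self-adjointness follows ``by the same manipulation'' as in Proposition \ref{bga::proposition:alternative_definition} does not go through: there the index set of the sum is the set of \emph{all} edges through one fixed vertex of the same graph, which is exactly what makes the insert-the-unit regrouping legitimate; $I(x)$ is not of that form, and controlling it is where Lemma \ref{class::lemma:properties_of_I(x)} must genuinely enter, including for self-adjointness.

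For comparison, the paper's proof avoids both issues by computing $P_xP_x^\ast$ (not $P_x^2$): the middle factor $\hat{p}_{v^\prime}\hat{p}_{v^{\prime\prime}}$ collapses to $\delta_{v^\prime v^{\prime\prime}}\hat{p}_{v^\prime}$ by \eqref{bga::eq:partition_relation}, leaving $\sum_{\{u^\prime,v^\prime\},\{u^{\prime\prime},v^\prime\}\in I(x)}\hat{p}_{u^\prime}\hat{p}_{v^\prime}\hat{p}_{u^{\prime\prime}}$. Then, for each fixed pair $(u^\prime,u^{\prime\prime})$, Lemma \ref{class::lemma:properties_of_I(x)} is applied in an all-or-nothing fashion over the common neighbors of $u^\prime$ and $u^{\prime\prime}$ to show that the constrained $v^\prime$-sum equals $\bigl(\sum_{v^\prime:\{u^\prime,v^\prime\}\in I(x)}\hat{p}_{u^\prime}\hat{p}_{v^\prime}\bigr)\hat{p}_{u^{\prime\prime}}$ (in the mixed case both sides vanish because $\hat{p}_{u^\prime}\hat{p}_{u^{\prime\prime}}=0$). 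Summing over all $u^{\prime\prime}\in U^\prime$ and using $\sum_{u^{\prime\prime}}\hat{p}_{u^{\prime\prime}}=1$ gives $P_xP_x^\ast=P_x$, from which self-adjointness and idempotency follow at once. If you want to keep your outline, the fix is to abandon the $v^\prime$-blocks and aggregate over the $U^\prime$-side at the end, exactly as above.
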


\begin{proof}
    Observe for any $x \in U \cup V$
    \begin{align*}
        P_x P_x^\ast 
            &= \left( \sum_{\{u^\prime, v^\prime\} \in I(x)} \hat{p}_{u^\prime} \hat{p}_{v^\prime} \right)
            \left( \sum_{\{u^{\prime\prime}, v^{\prime\prime}\} \in I(x)} \hat{p}_{u^{\prime\prime}} \hat{p}_{v^{\prime\prime}} \right)^\ast \\
            &= \left( \sum_{\{u^\prime, v^\prime\} \in I(x)} \hat{p}_{u^\prime} \hat{p}_{v^\prime} \right)
            \left( \sum_{\{u^{\prime\prime}, v^{\prime\prime}\} \in I(x)} \hat{p}_{v^{\prime\prime}} \hat{p}_{u^{\prime\prime}} \right) \\
            &=  \sum_{\{u^\prime, v^\prime\}, \{u^{\prime\prime}, v^{\prime\prime}\} \in I(x)} \hat{p}_{u^\prime} \hat{p}_{v^\prime} \hat{p}_{v^{\prime\prime}} \hat{p}_{u^{\prime\prime}} \\
            &=  \sum_{\{u^\prime, v^\prime\}, \{u^{\prime\prime}, v^{\prime}\} \in I(x)} \hat{p}_{u^\prime} \hat{p}_{v^\prime} \hat{p}_{u^{\prime\prime}},
    \end{align*}
    where we use \eqref{bga::eq:partition_relation} for the generators of $C^\ast(G^\prime)$ in the last step.
    Let $u^\prime$ and $u^{\prime\prime} \in U^\prime$ be fixed. We claim
    \begin{align*}
        \sum_{v^\prime: \{u^\prime, v^\prime\}, \{u^{\prime\prime}, v^{\prime}\} \in I(x)} \hat{p}_{u^\prime} \hat{p}_{v^\prime} \hat{p}_{u^{\prime\prime}}
        &= \left( \sum_{v^\prime: \{u^\prime, v^\prime\} \in I(x)} \hat{p}_{u^\prime} \hat{p}_{v^\prime} \right) \hat{p}_{u^{\prime\prime}}.
    \end{align*}
    To prove the claim let us distinguish the following cases.

    Case 1. Assume $u^\prime = u^{\prime\prime}$. Then the claim is immediate, for $\{u^\prime, v^\prime\} \in I(x)$ and $\{u^{\prime\prime}, v^{\prime}\} \in I(x)$ are equivalent.

    Case 2. Assume $u^\prime \neq u^{\prime\prime}$. Further, suppose that there is a common neighbor $v_0^\prime$ of $u^\prime$ and $u^{\prime\prime}$ such that 
    \begin{align*}
        \{u^\prime, v_0^\prime\} \in I(x),
        \quad \text{and} \quad 
        \{u^{\prime\prime}, v_0^\prime\} \not \in I(x).
    \end{align*}
    Then, by Lemma \ref{class::lemma:properties_of_I(x)} the same holds for any common neighbor of $u^\prime$ and $u^{\prime\prime}$ instead of $v_0^\prime$. Hence, we have on the one hand
    \begin{align*}
        \sum_{v^\prime: \{u^\prime, v^\prime\}, \{u^{\prime\prime}, v^{\prime}\} \in I(x)} \hat{p}_{u^\prime} \hat{p}_{v^\prime} \hat{p}_{u^{\prime\prime}}
        = \sum_{\emptyset} &= 0,
    \end{align*}
    and on the other hand
    \begin{align*}
         \left( \sum_{v^\prime: \{u^\prime, v^\prime\} \in I(x)} \hat{p}_{u^\prime} \hat{p}_{v^\prime} \right) \hat{p}_{u^{\prime\prime}}
         &= \hat{p}_{u^\prime} \left( \sum_{v^\prime \in \mathcal{N}(u^\prime) \cap \mathcal{N}(u^{\prime\prime})} \hat{p}_{v^\prime} \right) \hat{p}_{u^{\prime\prime}} \\
         &= \hat{p}_{u^\prime} \hat{p}_{u^{\prime\prime}} \\
         &= 0,
    \end{align*}
    where we use in the first step that all common neighbors of $u^\prime$ and $u^{\prime\prime}$ are in $I(x)$ and products over non-neighbors vanish by \eqref{bga::eq:orthogonality_relation}, and in the last step relation \eqref{bga::eq:partition_relation} for the generators of $C^\ast(G^\prime)$.

    Case 3. Finally, assume that neither of the previous cases applies. 
    Then $u^\prime \neq u^{\prime\prime}$ and for any common neighbor $v^\prime$ of $u^\prime$ and $u^{\prime\prime}$ we have 
    $$
        \{u^\prime, v^\prime\} \in I(x) \implies \{u^{\prime\prime}, v^\prime\} \in I(x).
    $$
    Using the relations \eqref{bga::eq:orthogonality_relation} for the generators of $C^\ast(G^\prime)$ one obtains 
    \begin{align*}
        \sum_{v^\prime: \{u^\prime, v^\prime\}, \{u^{\prime\prime}, v^{\prime}\} \in I(x)} \hat{p}_{u^\prime} \hat{p}_{v^\prime} \hat{p}_{u^{\prime\prime}}
        &= \sum_{v^\prime: \{u^\prime, v^\prime\} \in I(x), \{u^{\prime\prime}, v^{\prime}\} \in E^\prime} \hat{p}_{u^\prime} \hat{p}_{v^\prime} \hat{p}_{u^{\prime\prime}} \\
        &= \left( \sum_{v^\prime: \{u^\prime, v^\prime\} \in I(x)} \hat{p}_{u^\prime} \hat{p}_{v^\prime} \right) \hat{p}_{u^{\prime\prime}}.
    \end{align*}
    
    Altogether we conclude
    \begin{align*}
        P_x P_x^\ast
        &=  \sum_{\{u^\prime, v^\prime\}, \{u^{\prime\prime}, v^{\prime}\} \in I(x)} \hat{p}_{u^\prime} \hat{p}_{v^\prime} \hat{p}_{u^{\prime\prime}} \\
        &= \sum_{u^\prime, u^{\prime\prime} \in U^\prime} \left( \sum_{v': \{u^\prime, v^\prime\}, \{u^{\prime\prime}, v^{\prime}\} \in I(x)} \hat{p}_{u^\prime} \hat{p}_{v^\prime} \hat{p}_{u^{\prime\prime}} \right) \\
        &= \sum_{u^\prime, u^{\prime\prime} \in U^\prime} \left( \left( \sum_{v^\prime: \{u^\prime, v^\prime\} \in I(x)} \hat{p}_{u^\prime} \hat{p}_{v^\prime} \right) \hat{p}_{u^{\prime\prime}} \right) \\
        &= \left( \sum_{\{u^\prime, v^\prime\} \in I(x)} \hat{p}_{u^\prime} \hat{p}_{v^\prime} \right) \left( \sum_{u^{\prime\prime} \in U^\prime} \hat{p}_{u^{\prime\prime}} \right) \\
        &= P_x,
    \end{align*}
    where we use \eqref{bga::eq:partition_relation} in the last step. This shows that $P_x$ is a projection in $C^\ast(G^\prime)$ since one obtains directly $P_x^\ast = \left( P_x P_x^\ast \right)^\ast = P_x P_x^\ast = P_x$.
    Finally, observe
    \begin{align*}
        \sum_{u \in U} P_u 
            &= \sum_{u \in U} \left( \sum_{\{u^\prime, v^\prime\} \in I(u)} \hat{p}_{u^\prime} \hat{p}_{v^\prime} \right) \\
            &= \sum_{\{u^\prime, v^\prime\} \in E^\prime} \hat{p}_{u^\prime} \hat{p}_{v^\prime} \\
            &= \left( \sum_{u^\prime \in U^\prime} \hat{p}_{u^\prime} \right) \left( \sum_{v^\prime \in V^\prime} \hat{p}_{v^\prime} \right) \\
            &= 1 \cdot 1 = 1,
    \end{align*}
    where we use \eqref{bga::eq:orthogonality_relation} in the second-last step and \eqref{bga::eq:partition_relation} in the last step.
    This shows that the set $\{P_u \mid u \in U\}$ is a partition of unity in $C^\ast(G^\prime)$. The argument works analogously for $\{P_v \mid v \in V\}$.
\end{proof}

\begin{lemma} 
    \label{class::lemma:Px_are_orthogonal}
    Let $P_x = \sum_{\{u^\prime, v^\prime\} \in I(x)} \hat{p}_{u^\prime} \hat{p}_{v^\prime} \in C^\ast(G^\prime)$ be the element from Proposition \ref{class::prop:construct_isomorphism_phi_from_f}. For any $u \in U$ and $v \in V$ we have $P_u P_v = 0$ unless $u \sim v$, i.e. the relations \eqref{bga::eq:orthogonality_relation} are satisfied.
\end{lemma}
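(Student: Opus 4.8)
\emph{Proof plan.}
The plan is to expand $P_uP_v$ over the defining sums and then reduce it to a short list of cases, in the spirit of the proof of Lemma~\ref{class::lemma:Px_is_a_projections}. Writing every edge of $G'$ as $\{u',v'\}$ with $u'\in U'$ and $v'\in V'$, one has
\begin{align*}
  P_uP_v=\sum_{\{u_1',v_1'\}\in I(u)}\ \sum_{\{u_2',v_2'\}\in I(v)}\hat p_{u_1'}\hat p_{v_1'}\hat p_{u_2'}\hat p_{v_2'},
\end{align*}
and since $\hat p_{v_1'}\hat p_{u_2'}=0$ unless $\{u_2',v_1'\}\in E'$, only terms with $v_1'\sim u_2'$ in $G'$ survive. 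Fixing the ``outer'' vertices $u_1'$ and $v_2'$ and carrying out the inner sum, this becomes $P_uP_v=\sum_{u_1',v_2'}\hat p_{u_1'}C_{u_1'}D_{v_2'}\hat p_{v_2'}$ with projections $C_{u_1'}:=\sum_{v_1':\{u_1',v_1'\}\in I(u)}\hat p_{v_1'}$ and $D_{v_2'}:=\sum_{u_2':\{u_2',v_2'\}\in I(v)}\hat p_{u_2'}$, so it suffices to analyse the individual products $\hat p_{u_1'}\hat p_{v_1'}\hat p_{u_2'}\hat p_{v_2'}$ and see which pairs $\bigl(\{u_1',v_1'\},\{u_2',v_2'\}\bigr)$ can contribute.

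First I would dispose of all terms in which the two chosen edges $\{u_1',v_1'\}\in I(u)$ and $\{u_2',v_2'\}\in I(v)$ coincide, share a vertex, or are the two diagonals of a $4$-cycle of $G'$. In each of these cases, a nonzero product $\hat p_{u_1'}\hat p_{v_1'}\hat p_{u_2'}\hat p_{v_2'}$ contains a nonzero subword of the form $\hat p_x\hat p_y\hat p_z$ with distinct $x,y,z$, so Lemma~\ref{bga::lemma:nontrivial_p_x2 p_y p_x2_and_K22_subgraph} (or, in the diagonal case, the presence of the fourth edge) exhibits a subgraph of $G'$ isomorphic to $K_{2,2}$ containing \emph{both} $\{u_1',v_1'\}$ and $\{u_2',v_2'\}$. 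Transferring this along $f$ via \eqref{reps::eq:f_preserves_K22_subgraphs} (and keeping track of adjacencies inside it with \eqref{reps::eq:f_preserves_adjacency}) yields a subgraph $H\subset G$ with $H\cong K_{2,2}$ containing the edges $f^{-1}(\{u_1',v_1'\})\ni u$ and $f^{-1}(\{u_2',v_2'\})\ni v$. But $H$ is complete bipartite with its two sides contained in $U$ and $V$ respectively, so $u$ and $v$ are joined by an edge of $H$, contradicting $u\not\sim v$; hence all these terms vanish.

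What remains are the terms where the two edges are disjoint, are \emph{not} completed to a $K_{2,2}$ by a fourth edge of $G'$, yet $\{u_2',v_1'\}\in E'$. These need not vanish on the nose — one can already realise $\hat p_{u_1'}\hat p_{v_1'}\hat p_{u_2'}\hat p_{v_2'}\neq 0$ on a three-dimensional representation of $C^\ast(G')$ — so here the argument must exploit cancellation, and this is the step I expect to be the main obstacle. I would attack it by noting that a surviving term forces, through Lemma~\ref{bga::lemma:nontrivial_p_x2 p_y p_x2_and_K22_subgraph} applied to the subwords $\hat p_{v_1'}\hat p_{u_2'}\hat p_{v_2'}$ and $\hat p_{u_1'}\hat p_{v_1'}\hat p_{u_2'}$, the existence of two copies of $K_{2,2}$ in $G'$ glued along the connecting edge $\{u_2',v_1'\}$; pulling both back to $G$ and using \eqref{reps::eq:f_preserves_adjacency} pins down that the $G$-edges $f^{-1}(\{u_1',v_1'\})$, $f^{-1}(\{u_2',v_1'\})$, $f^{-1}(\{u_2',v_2'\})$ form a path $u-w-z-v$ on four distinct vertices with $u\not\sim v$. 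With this rigid picture, Lemma~\ref{class::lemma:properties_of_I(x)} controls exactly which edges through $u_1'$ lie in $I(u)$ and which edges through $v_2'$ lie in $I(v)$, and one uses it to show that the inner sums swell to $\hat p_{u_1'}C_{u_1'}=\hat p_{u_1'}$ and $D_{v_2'}\hat p_{v_2'}=\hat p_{v_2'}$ on exactly the relevant index set, so that $P_uP_v$ collapses to a sum of products $\hat p_{u_1'}\hat p_{v_2'}$ over pairs with $\{u_1',v_2'\}\notin E'$, each of which is $0$ by \eqref{bga::eq:orthogonality_relation}. Establishing that these partial ``inner'' sums really do telescope to full partition sums is the delicate point. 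An alternative route, which sidesteps the $\hat p_x$-level bookkeeping, would be to verify instead that the edge-indexed elements $\hat p_{u'_{f(e)}}\hat p_{v'_{f(e)}}$ of $C^\ast(G')$ satisfy the relations \eqref{bga::eq:gc1}--\eqref{bga::eq:gc4} of Proposition~\ref{bga::proposition:alternative_definition}, the forbidden edge in those relations being played by $\{u,v\}\notin E$; this would simultaneously give Proposition~\ref{class::prop:construct_isomorphism_phi_from_f}.
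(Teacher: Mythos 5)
Your plan stalls exactly where you say it does, and the paper's proof shows the step you are missing: it never works with the four-letter words $\hat p_{u_1'}\hat p_{v_1'}\hat p_{u_2'}\hat p_{v_2'}$ at all. Since $P_v$ is already known to be a projection (Lemma \ref{class::lemma:Px_is_a_projections}), one writes $P_uP_v=P_uP_v^\ast$ and expands the \emph{second} factor in reversed order, $P_v^\ast=\sum_{\{u'',v''\}\in I(v)}\hat p_{v''}\hat p_{u''}$. Then the two middle letters are both $V'$-projections, and \eqref{bga::eq:partition_relation} forces $v''=v'$, so the whole product collapses to a sum of three-letter words $\hat p_{u'}\hat p_{v'}\hat p_{u''}$ indexed by pairs of edges $\{u',v'\}\in I(u)$, $\{u'',v'\}\in I(v)$ sharing their $V'$-endpoint. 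Each such word is then killed \emph{termwise} under the assumption $u\not\sim v$: if $u'=u''$ the common edge lies in $I(u)\cap I(v)=\{f(\{u,v\})\}$, forcing $u\sim v$; if $u'\neq u''$ and the word is nonzero, Lemma \ref{bga::lemma:nontrivial_p_x2 p_y p_x2_and_K22_subgraph} produces a $K_{2,2}$ in $G'$ containing both edges, and \eqref{reps::eq:f_preserves_K22_subgraphs} together with \eqref{reps::eq:f_preserves_adjacency} makes their $f$-preimages adjacent edges of $G$ containing $u$ and $v$ respectively, again forcing $u\sim v$. No cancellation between terms is ever needed. In your expansion, by contrast, the only constraint on a surviving term is that the middle pair $\{u_2',v_1'\}$ is an edge of $G'$, and your ``two $K_{2,2}$'s glued along a connecting edge'' terms are genuinely nonzero in general; the telescoping of the inner sums to full partition sums that you hope for is precisely what you have not established, so the proposal has a real gap at its central step.

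A caution about your suggested alternative route: sending $x_e\mapsto\hat p_{u'_{f(e)}}\hat p_{v'_{f(e)}}$ and checking \eqref{bga::eq:gc1}--\eqref{bga::eq:gc4} relative to $G$ does not work as stated, because \eqref{reps::eq:f_preserves_K22_subgraphs} and \eqref{reps::eq:f_preserves_adjacency} do not force $f$ to respect the two sides of the bipartition. For instance, for $G=G'=K_{2,2}$ the ``transposing'' bijection $f(\{u_i,v_j\})=\{u_j',v_i'\}$ satisfies both properties, yet for the edges $e=\{u_1,v_1\}$, $g=\{u_2,v_1\}$ (so $e\cap g\cap U=\emptyset$) one gets $X_e^\ast X_g=\hat p_{v_1'}\hat p_{u_1'}\hat p_{v_2'}\neq 0$ in $C^\ast(K_{2,2})$, violating \eqref{bga::eq:gc1}. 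This is why the paper works with the vertex sums $P_x=\sum_{\{u',v'\}\in I(x)}\hat p_{u'}\hat p_{v'}$, which are insensitive to such a side-swap, rather than with the individual edge elements.
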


\begin{proof}
    By the previous Lemma \ref{class::lemma:Px_is_a_projections} the $P_x$ are projections. Observe 
    \begin{align*}
        P_u P_v 
            &= P_u P_v^\ast \\
            &= \left( \sum_{\{u^\prime, v^\prime\} \in I(u)} \hat{p}_{u^\prime} \hat{p}_{v^\prime} \right)
                \left( \sum_{\{u^{\prime\prime}, v^{\prime\prime}\} \in I(v)} \hat{p}_{v^{\prime\prime}} \hat{p}_{u^{\prime\prime}} \right) \\
            &= \sum_{\{u^\prime, v^\prime\} \in I(u), \{u^{\prime\prime}, v^{\prime\prime}\} \in I(v)} \hat{p}_{u^\prime} \hat{p}_{v^\prime} \hat{p}_{v^{\prime\prime}} \hat{p}_{u^{\prime\prime}} \\
            &= \sum_{\{u^\prime, v^\prime\} \in I(u), \{u^{\prime\prime}, v^{\prime}\} \in I(v)} \hat{p}_{u^\prime} \hat{p}_{v^\prime} \hat{p}_{u^{\prime\prime}},
    \end{align*}
    where we use \eqref{bga::eq:orthogonality_relation} in the last step.
    Assume that there are some $\{u^\prime, v^\prime\} \in I(u)$ and $\{u^{\prime\prime}, v^\prime\} \in I(v)$ such that 
    \begin{align*}
        \hat{p}_{u^\prime} \hat{p}_{v^\prime} \hat{p}_{u^{\prime\prime}} \neq 0.
    \end{align*}
    We distinguish two cases. First, if $u^\prime = u^{\prime\prime}$, then we have
    \begin{align*}
        \{u^\prime, v^\prime\} \in I(u) \cap I(v) = \{f(e) \mid e = \{u,v\} \in E\},
    \end{align*}
    and therefore $u \sim v$.

    Second, let us assume $u^\prime \neq u^{\prime\prime}$. In view of Lemma \ref{bga::lemma:nontrivial_p_x2 p_y p_x2_and_K22_subgraph} there must be some $v^{\prime\prime} \neq v^\prime$ such that the following is a subgraph of $G^\prime$.
    \begin{center}
        \begin{tikzpicture}
            \draw (-1,2) -- (1, 2);
            \draw (-1,1) -- (1, 1);
            \draw (-1,2) -- (1, 1);
            \draw (-1,1) -- (1, 2);
            \node[label=90:$e_1$] at (0, 1.8) {};
            \node[label=-90:$e_4$] at (0, 1.2) {};
            \node[label=90:$e_3$] at (-.7, 1.3) {};
            \node[label=90:$e_2$] at (.7, 1.3) {};
            \node[label=180:$u^\prime$, fill=black, circle, inner sep=.05cm] at (-1, 2) {};
            \node[label=180:$u^{\prime\prime}$, fill=black, circle, inner sep=.05cm] at (-1, 1) {};
            \node[label=0:$v^\prime$, fill=white, draw=black, circle, inner sep=.05cm] at (1, 2) {};
            \node[label=0:$v^{\prime\prime}$, fill=white, draw=black, circle, inner sep=.05cm] at (1, 1) {};
        \end{tikzpicture}
    \end{center}
    Using that $f$ satisfies \eqref{reps::eq:f_preserves_K22_subgraphs} it follows that the subgraph 
    $$ G(f^{-1}(e_1), f^{-1}(e_2), f^{-1}(e_3), f^{-1}(e_4)) \subset G $$ 
    is isomorphic to $K_{2,2}$. Because of \eqref{reps::eq:f_preserves_adjacency} the edges $f^{-1}(e_1)$ and $f^{-1}(e_2)$ are adjacent. Using $u \in f^{-1}(e_1) \cap U$ and $v \in f^{-1}(e_2) \cap V$ one easily checks that the subgraph $G(f^{-1}(e_1), f^{-1}(e_2))$ can only take two different forms which are sketched in Figure \ref{class::fig:G(f^-1(e_1), f^-1(e_2))}. In both cases, one sees $u \sim v$.
    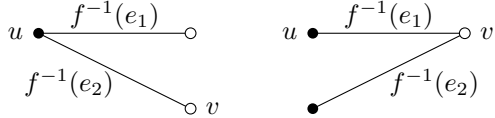
\begin{figure}[htb]
        \begin{tikzpicture}
            \draw (-1,2) -- (1, 2);
            \draw (-1,2) -- (1, 1);
            \node[label=90:$f^{-1}(e_1)$] at (0, 1.8) {};
            \node[label=90:$f^{-1}(e_2)$] at (-.6, .9) {};
            \node[label=180:$u$, fill=black, circle, inner sep=.05cm] at (-1, 2) {};
            \node[fill=white, draw=black, circle, inner sep=.05cm] at (1, 2) {};
            \node[label=0:$v$, fill=white, draw=black, circle, inner sep=.05cm] at (1, 1) {};
        \end{tikzpicture}
        $\quad$
        \begin{tikzpicture}
            \draw (-1,2) -- (1, 2);
            \draw (-1,1) -- (1, 2);
            \node[label=90:$f^{-1}(e_1)$] at (0, 1.8) {};
            \node[label=90:$f^{-1}(e_2)$] at (.6, .9) {};
            \node[label=180:$u$, fill=black, circle, inner sep=.05cm] at (-1, 2) {};
            \node[ fill=black, circle, inner sep=.05cm] at (-1, 1) {};
            \node[label=0:$v$, fill=white, draw=black, circle, inner sep=.05cm] at (1, 2) {};
        \end{tikzpicture}
        \caption{The two possible forms of the subgraph $G(f^{-1}(e_1), f^{-1}(e_2))$}
        \label{class::fig:G(f^-1(e_1), f^-1(e_2))}
    \end{figure}
\end{proof}

Putting the previous lemmas together we obtain a proof of Proposition \ref{class::prop:construct_isomorphism_phi_from_f}.

\begin{proof}[Proof of Proposition \ref{class::prop:construct_isomorphism_phi_from_f}]
    The preceding lemmas show that the $P_x$ satisfy the relations required for the generators of $C^\ast(G)$. Thus, by the universal property of $C^\ast(G)$ there is a unique $\ast$-homomorphism $\varphi_f: C^\ast(G) \to C^\ast(G^\prime)$ with $\varphi_f(p_x) = P_x$ for all $x \in U \cup V$.     
\end{proof}

Let us summarize what we have seen so far. Given a homeomorphism $\Phi: \mathrm{Spec}_{\leq 2}(C^\ast(G)) \to \mathrm{Spec}_{\leq 2}(C^\ast(G^\prime))$, we obtain from Lemma \ref{reps::lemma:edge_bijection_f_from_homeomorphism_of_spectrum} a bijective map $f: E \to E^\prime$ that satisfies the properties \eqref{reps::eq:f_preserves_K22_subgraphs} and \eqref{reps::eq:f_preserves_adjacency}. Evidently, its inverse map $f^{-1}: E^\prime \to E$ has the same properties. Thus, Proposition \ref{class::prop:construct_isomorphism_phi_from_f} yields two $\ast$-homomorphisms $\varphi_f: C^\ast(G) \to C^\ast(G^\prime)$ and $\varphi_{f^{-1}}: C^\ast(G^\prime) \to C^\ast(G)$ that are defined on the respective generators as follows:
\begin{align}
    \label{class::eq:phi_on_generators}
    \varphi_f: C^\ast(G) \ni p_x &\mapsto P_x = \sum_{\{u^\prime, v^\prime\} \in I(x)} \hat{p}_{u^\prime} \hat{p}_{v^\prime} \in C^\ast(G^\prime), & & x \in U \cup V, \\
    \label{class::eq:psi_on_generators}
    \varphi_{f^{-1}}: C^\ast(G^\prime) \ni \hat{p}_y &\mapsto \hat{P}_y := \sum_{\{u, v\} \in J(y)} p_u p_v \in C^\ast(G), & & y \in U^\prime \cup V^\prime,
\end{align}
where $I(x) = \{f(e) \in E^\prime: x \in e\}$ and $J(x) = \{f^{-1}(e) \in E: y \in e\}$ for $x \in U \cup V$ and $y \in U^\prime \cup V^\prime$, respectively. In the above formulas we implicitly require $u \in U, v \in V$ and $u^\prime \in U^\prime, v^\prime \in V^\prime$.

To prove $C^\ast(G) \cong C^\ast(G^\prime)$ it only remains to show that $\varphi_f$ and $\varphi_{f^{-1}}$ are inverse to each other. In fact, it suffices to show $\varphi_{f^{-1}} \circ \varphi_f = \mathrm{id}_{C^\ast(G)}$ because the other statement $\varphi_f \circ \varphi_{f^{-1}} = \mathrm{id}_{C^\ast(G^\prime)}$ follows by replacing $f$ with $f^{-1}$. We prove this in the following lemma.

\begin{lemma} 
    \label{class::lemma:phi_is_invertible}
    One has $\varphi_{f^{-1}} \circ \varphi_f = \mathrm{id}_{C^\ast(G)}$.
\end{lemma}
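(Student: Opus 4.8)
The plan is to reduce everything to a computation on the generators of $C^\ast(G)$ and then to exploit the combinatorial properties \eqref{reps::eq:f_preserves_K22_subgraphs}, \eqref{reps::eq:f_preserves_adjacency} of $f$ together with the elementary relations of the two algebras. Since $\varphi_{f^{-1}}\circ\varphi_f$ and $\mathrm{id}_{C^\ast(G)}$ are $\ast$-homomorphisms and the projections $p_x$ generate $C^\ast(G)$, it suffices to check $\varphi_{f^{-1}}(\varphi_f(p_x))=p_x$ for a single vertex $x\in U\cup V$. For an edge $e\in E$ I write $e=\{u_e,v_e\}$ with $u_e\in U$, $v_e\in V$, and similarly for edges of $G'$. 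Using \eqref{class::eq:phi_on_generators} and $I(x)=\{f(e)\mid x\in e\}$ one first obtains $\varphi_f(p_x)=\sum_{e\ni x}\hat p_{u_{f(e)}}\hat p_{v_{f(e)}}$, hence
\[
  \varphi_{f^{-1}}(\varphi_f(p_x))=\sum_{e\ni x}\varphi_{f^{-1}}(\hat p_{u_{f(e)}})\,\varphi_{f^{-1}}(\hat p_{v_{f(e)}}).
\]
Plugging in \eqref{class::eq:psi_on_generators} expands the right-hand side into a sum of products $p_{u_g}p_{v_g}p_{u_h}p_{v_h}$ over pairs of edges $g,h\in E$ subject to the conditions that $u_{f(e)}$ is the $U'$-vertex of $f(g)$ and $v_{f(e)}$ is the $V'$-vertex of $f(h)$; in particular $g=h=e$ is always admissible and contributes $(p_{u_e}p_{v_e})^2$.

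The next step is to discard the vanishing terms. If $g=h$, the two conditions force $f(g)=f(e)$, hence $g=e$, so the diagonal part is exactly $\sum_{e\ni x}(p_{u_e}p_{v_e})^2$. If $g\neq h$, then $p_{u_g}p_{v_g}p_{u_h}p_{v_h}$ can only be nonzero when $g$ and $h$ lie inside a common $K_{2,2}$-subgraph of $G$, by Lemma \ref{bga::lemma:nontrivial_p_x2 p_y p_x2_and_K22_subgraph} applied to one of the length-three subwords of this product. Transporting this $K_{2,2}$ over to $G'$ via \eqref{reps::eq:f_preserves_K22_subgraphs}, and then using \eqref{reps::eq:f_preserves_adjacency} together with $f^{-1}\circ f=\mathrm{id}_E$, one pins down which pairs $(g,h)$ actually occur for a given $e$ and which of the four edges of the image $K_{2,2}$ equals $f(e)$. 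Carrying this bookkeeping through, the surviving off-diagonal terms, summed over all $e\ni x$, add up to $\sum_{e\ni x}\big(p_{u_e}p_{v_e}-(p_{u_e}p_{v_e})^2\big)$; here one uses the partition relation \eqref{bga::eq:partition_relation} (equivalently $\sum_{f\in E}p_{u_f}p_{v_f}=1$, as in the proof of Proposition \ref{bga::proposition:alternative_definition}) to collapse the various sums over common neighbours. Adding the diagonal and off-diagonal parts then yields $\sum_{e\ni x}p_{u_e}p_{v_e}=p_x$, which is the claim. (The cases $x\in U$ and $x\in V$ are symmetric and are treated in the same way, with the roles of $U$ and $V$ interchanged.)

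The main obstacle is precisely the combinatorial step of matching the two copies of $K_{2,2}$: because $f$ need not respect the $U/V$-bipartition, a $K_{2,2}$-subgraph of $G$ may be sent to one of $G'$ with its two sides interchanged, and one has to use \eqref{reps::eq:f_preserves_adjacency} to decide, edge by edge, how $f$ identifies them. Getting this identification right is exactly what is needed to see that the off-diagonal terms reorganize into $\sum_{e\ni x}\big(p_{u_e}p_{v_e}-(p_{u_e}p_{v_e})^2\big)$ and hence to make the diagonal and off-diagonal contributions combine to $p_x$.
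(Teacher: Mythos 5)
Your overall strategy (verify the identity on the generators $p_x$, expand via \eqref{class::eq:phi_on_generators} and \eqref{class::eq:psi_on_generators}, kill cross terms using Lemma \ref{bga::lemma:nontrivial_p_x2 p_y p_x2_and_K22_subgraph} together with \eqref{reps::eq:f_preserves_K22_subgraphs} and \eqref{reps::eq:f_preserves_adjacency}, and collapse the rest with \eqref{bga::eq:partition_relation}) is the same as the paper's, but the decisive step is missing. The assertion that ``the surviving off-diagonal terms, summed over all $e\ni x$, add up to $\sum_{e\ni x}\bigl(p_{u_e}p_{v_e}-(p_{u_e}p_{v_e})^2\bigr)$'' is exactly equivalent to the statement $\varphi_{f^{-1}}(\varphi_f(p_x))=p_x$ you are trying to prove, since the diagonal part is $\sum_{e\ni x}(p_{u_e}p_{v_e})^2$ and $p_x=\sum_{e\ni x}p_{u_e}p_{v_e}$. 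Saying that ``carrying this bookkeeping through'' yields it is therefore not a proof; the bookkeeping \emph{is} the proof, and it occupies the bulk of the paper's argument: one must show precisely which index pairs survive and that each surviving term is matched bijectively with a term of $\sum_{v_1,v_2}p_{v_1}p_xp_{v_2}$ (resp.\ its $U$--$V$ mirror), which the paper does by proving a two-sided set equality of triples, with separate cases $v_1=v_2$ and $v_1\neq v_2$ and a case analysis of the two possible shapes of $G^\prime(f(e_1),f(e_2))$ coming from \eqref{reps::eq:f_preserves_adjacency}.

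There is also a technical weakness in the reduction you do sketch. You expand into length-four words $p_{u_g}p_{v_g}p_{u_h}p_{v_h}$ and claim that a nonzero such word with $g\neq h$ forces $g$ and $h$ to lie in a \emph{common} $K_{2,2}$-subgraph. Lemma \ref{bga::lemma:nontrivial_p_x2 p_y p_x2_and_K22_subgraph} applied to a length-three subword only gives a $K_{2,2}$ containing one of the two edges (when $u_g\neq u_h$ and $v_g\neq v_h$ it yields common neighbours of $u_g,u_h$, respectively of $v_g,v_h$, but not the edge $\{u_g,v_h\}$), so this claim is unproven as stated. The paper sidesteps the problem by first rewriting $\varphi_{f^{-1}}(\hat p_{u^\prime})$ as its adjoint $\sum p_{v_1}p_{u_1}$, so that in every product the two adjacent factors are projections from the same partition and collapse by \eqref{bga::eq:partition_relation}/\eqref{bga::eq:orthogonality_relation}; all terms then become three-letter words $p_{v_1}p_{u_1}p_{v_2}$ with a single middle $U$-vertex, to which Lemma \ref{bga::lemma:nontrivial_p_x2 p_y p_x2_and_K22_subgraph} applies directly. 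Without this (or an equivalent) reduction, and without an explicit argument identifying the surviving terms, your proposal leaves the core of Lemma \ref{class::lemma:phi_is_invertible} unestablished.
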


\begin{proof}
    It suffices to show 
    \begin{align*}
        \varphi_{f^{-1}}(\varphi_f(p_x)) = p_x,
    \end{align*}
    for all $x \in U \cup V$. Indeed, we only prove this for $x \in U$ because the case $x \in V$ is analogous. Thus, let $u \in U$ be fixed.
    The definitions of $\varphi_{f^{-1}}$ and $\varphi_f$, respectively, yield 
    \begin{align*}
        \varphi_{f^{-1}}(\varphi_f(p_u))
            &= \varphi_{f^{-1}}\left( \sum_{\{u^\prime, v^\prime\} \in I(u)} \hat{p}_{u^\prime} \hat{p}_{v^\prime} \right) \\
            &= \sum_{\{u^\prime, v^\prime\} \in I(u)} \left( \sum_{\{u_1, v_1\} \in J(u^\prime)} p_{u_1} p_{v_1} \right) \left( \sum_{\{u_2, v_2\} \in J(v^\prime)} p_{u_2} p_{v_2} \right).
    \end{align*}
    Using $\varphi_{f^{-1}}(\hat{p}_{u^\prime}) = \left(\varphi_{f^{-1}}(\hat{p}_{u^\prime})\right)^\ast = \sum_{\{u_1, v_1\} \in J(u^\prime)} p_{v_1} p_{u_1}$ together with \eqref{bga::eq:orthogonality_relation} we can continue the computation as follows:
    \begin{align*}
        \varphi_{f^{-1}}(\varphi_f(p_u))
            &= \sum_{\{u^\prime, v^\prime\} \in I(u)} \left( \sum_{\{u_1, v_1\} \in J(u^\prime)}  p_{v_1} p_{u_1}\right) \left( \sum_{\{u_2, v_2\} \in J(v^\prime)} p_{u_2} p_{v_2} \right) \\
            &= \sum_{\{u^\prime, v^\prime\} \in I(u)} \left( \sum_{\{u_1, v_1\} \in J(u^\prime), \{u_1, v_2\} \in J(v^\prime)} p_{v_1} p_{u_1} p_{v_2} \right).
    \end{align*}
    We claim
    \begin{align*} 
        &\left\{(v_1, u_1, v_2) \left| \;
            \begin{aligned}
                &\{u_1, v_1\} \in J(u^\prime) \\
                &\{u_1, v_2\} \in J(v^\prime)
            \end{aligned} \;
                \text{ for some } \{u^\prime, v^\prime\} \in I(u)
                \text{ with } p_{v_1} p_{u_1} p_{v_2} \neq 0 
            \right. \right\} \\
        &=\{(v_1, u, v_2): V \ni v_1 \sim u \sim v_2 \in V \text{ with } p_{v_1} p_u p_{v_2} \neq 0 \},
    \end{align*}
    where $u$ remains the fixed vertex from above.
    First, assume that $(v_1, u, v_2)$ is in the set on the right-hand side, i.e. 
    $$ V \ni v_1 \sim u \sim v_2 \in V \quad \text{with} \quad p_{v_1} p_u p_{v_2} \neq 0. $$ 
    There are two possibilities. If $v_1 = v_2$, then setting $\{u^\prime, v^\prime\} := f(\{u, v_1\})$ one easily verifies $\{u, v_1\} \in J(u^\prime) \cap J(v^\prime)$ while $\{u^\prime, v^\prime\} \in I(u)$. Thus, the triple $(v_1, u, v_2)$ is in the set on the left-hand side.
    
    Next, assume $v_1 \neq v_2$. As $p_{v_1} p_u p_{v_2}$ is non-zero, Lemma \ref{bga::lemma:nontrivial_p_x2 p_y p_x2_and_K22_subgraph} yields some vertex $u_2 \in U$ such that the graph from Figure \ref{class::fig:G(u, v_1, v_2, u_2)} is a subgraph of $G$.
    \begin{figure}[htb]
        \begin{tikzpicture}
            \draw[thick, color=red] (-1,2) -- (1, 2);
            \draw (-1,1) -- (1, 1);
            \draw[thick, color=red] (-1,2) -- (1, 1);
            \draw (-1,1) -- (1, 2);
            \node[label=90:$e_1$] at (0, 1.8) {};
            \node[label=-90:$e_4$] at (0, 1.2) {};
            \node[label=90:$e_2$] at (-.7, 1.3) {};
            \node[label=90:$e_3$] at (.7, 1.3) {};
            \node[label=180:$u$, fill=black, circle, inner sep=.05cm] at (-1, 2) {};
            \node[label=180:$u_2$, fill=black, circle, inner sep=.05cm] at (-1, 1) {};
            \node[label=0:$v_1$, fill=white, draw=black, circle, inner sep=.05cm] at (1, 2) {};
            \node[label=0:$v_2$, fill=white, draw=black, circle, inner sep=.05cm] at (1, 1) {};
        \end{tikzpicture}
        \caption{The subgraph $G(u, v_1, v_2, u_2)$}
        \label{class::fig:G(u, v_1, v_2, u_2)}
    \end{figure}
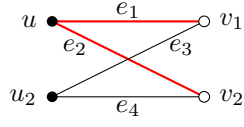
    As $e_1$ and $e_2$ are adjacent in $G(e_1, e_2, e_3, e_4)$, by Property \eqref{reps::eq:f_preserves_adjacency} the same holds in $G^\prime(f(e_1), f(e_2), f(e_3), f(e_4))$. Thus, the subgraph $G^\prime(f(e_1), f(e_2))$ must take one of the two forms shown in Figure \ref{class::fig:G(f(e_1), f(e_2))}.
    \begin{figure}[htb]
        \begin{tikzpicture}
            \draw (-1,2) -- (1, 2);
            \draw (-1,2) -- (1, 1);
            \node[label=90:$f(e_1)$] at (0, 1.8) {};
            \node[label=90:$f(e_2)$] at (-.6, .9) {};
            \node[label=180:$u^\prime$, fill=black, circle, inner sep=.05cm] at (-1, 2) {};
            \node[fill=white, draw=black, circle, inner sep=.05cm] at (1, 2) {};
            \node[label=0:$v^\prime$, fill=white, draw=black, circle, inner sep=.05cm] at (1, 1) {};
        \end{tikzpicture}
        $\quad$
        \begin{tikzpicture}
            \draw (-1,2) -- (1, 2);
            \draw (-1,1) -- (1, 2);
            \node[label=90:$f(e_1)$] at (0, 1.8) {};
            \node[label=90:$f(e_2)$] at (.6, .9) {};
            \node[label=180:$u^\prime$, fill=black, circle, inner sep=.05cm] at (-1, 2) {};
            \node[ fill=black, circle, inner sep=.05cm] at (-1, 1) {};
            \node[label=0:$v^\prime$, fill=white, draw=black, circle, inner sep=.05cm] at (1, 2) {};
        \end{tikzpicture}
        \caption{The two possible forms of the subgraph $G^\prime(f(e_1), f(e_2))$}
        \label{class::fig:G(f(e_1), f(e_2))}
    \end{figure} 
    Choosing the vertices $u^\prime \in f(e_1)$ and $v^\prime \in f(e_2)$ as in the figure one easily checks $\{u^\prime, v^\prime\} \in I(u)$ as well as $e_1 = \{u, v_1\} \in J(u^\prime)$ and $e_2 = \{u, v_2\} \in J(v^\prime)$. Thus, the triple $(v_1, u, v_2)$ is in the set on the left-hand side, and this concludes the proof of the inclusion $(\mathrm{lhs}) \supseteq (\mathrm{rhs})$. 

    For the other inclusion, assume that $(v_1, u_1, v_2)$ is in the set on the left-hand side, i.e.
    \begin{align*}
            \begin{aligned}
                &\{u_1, v_1\} \in J(u^\prime) \\
                &\{u_1, v_2\} \in J(v^\prime)
            \end{aligned} \;
                \text{ for some } \{u^\prime, v^\prime\} \in I(u)
                \text{ with } p_{v_1} p_{u_1} p_{v_2} \neq 0.
    \end{align*}
    It suffices to show $u_1 = u$ since $p_{v_1} p_{u_1} p_{v_2} \neq 0$ implies $v_1 \sim u_1 \sim v_2$ (using \eqref{bga::eq:orthogonality_relation}) and thus the triple $(v_1, u_1, v_2)$ is in the set on the right-hand side.

    If $v_1 = v_2$, then we obtain immediately $f(\{u_1, v_1\}) = \{u^\prime, v^\prime\} \in I(u)$ and this yields $u^\prime = u$. 

    Next, assume $v_1 \neq v_2$. In view of Lemma \ref{bga::lemma:nontrivial_p_x2 p_y p_x2_and_K22_subgraph} there exists some $u_2 \in U$ such that the graph from Figure \ref{class::fig:G(u_1, v_1, v_2, u_2)} is a subgraph of $G$. 
    \begin{figure}[htb]
        \begin{tikzpicture}
            \draw[thick, color=red] (-1,2) -- (1, 2);
            \draw (-1,1) -- (1, 1);
            \draw[thick, color=red] (-1,2) -- (1, 1);
            \draw (-1,1) -- (1, 2);
            \node[label=90:$e_1$] at (0, 1.8) {};
            \node[label=-90:$e_4$] at (0, 1.2) {};
            \node[label=90:$e_2$] at (-.7, 1.3) {};
            \node[label=90:$e_3$] at (.7, 1.3) {};
            \node[label=180:$u_1$, fill=black, circle, inner sep=.05cm] at (-1, 2) {};
            \node[label=180:$u_2$, fill=black, circle, inner sep=.05cm] at (-1, 1) {};
            \node[label=0:$v_1$, fill=white, draw=black, circle, inner sep=.05cm] at (1, 2) {};
            \node[label=0:$v_2$, fill=white, draw=black, circle, inner sep=.05cm] at (1, 1) {};
        \end{tikzpicture}
        \caption{The graph $G(u_1, v_1, v_2, u_2)$}
        \label{class::fig:G(u_1, v_1, v_2, u_2)}
    \end{figure}
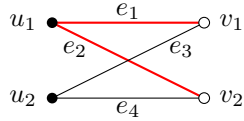
    where $e_1 \in J(u^\prime)\; ( \Leftrightarrow u^\prime \in f(e_1))$, and $e_2 \in J(v^\prime)\; ( \Leftrightarrow v^\prime \in f(e_2))$.
    Again by Property \eqref{reps::eq:f_preserves_adjacency} the subgraph $G^\prime(f(e_1), f(e_2))$ must take one of the two forms shown in Figure \ref{class::fig:G(f(e_1), f(e_2))}. The vertices $u^\prime$ and $v^\prime$ must be as depicted since $e_1 \in J(u^\prime)$ and $e_2 \in J(v^\prime)$. Now, one sees that either $f(e_1) = \{u^\prime, v^\prime\}$ or $f(e_2) = \{u^\prime, v^\prime\}$. Then the assumption $\{u^\prime, v^\prime\} \in I(u)$ implies $f(e_1) \in I(u)$ or $f(e_2) \in I(u)$. In any event, it follows that $u_1 = u$.

    Putting everything together we obtain
    \begin{align*}
        \varphi_{f^{-1}}(\varphi_f(p_u))
            &= \sum_{\{u^\prime, v^\prime\} \in I(u)} \left( \sum_{\{u_1, v_1\} \in J(u^\prime), \{u_1, v_2\} \in J(v^\prime)} p_{v_1} p_{u_1} p_{v_2} \right) \\
            &= \sum_{V \ni v_1 \sim u \sim v_2 \in V} p_{v_1} p_u p_{v_2} \\
            &= \sum_{v_1, v_2 \in V} p_{v_1} p_u p_{v_2} \\
            &= \left( \sum_{v_1 \in V} p_{v_1} \right) p_u \left( \sum_{v_2 \in V} p_{v_2} \right) \\
            &= p_u,
    \end{align*}
    where we use the claim for the step from the first to the second line. The last two equalities follows from \eqref{bga::eq:orthogonality_relation} and \eqref{bga::eq:partition_relation}, respectively.
\end{proof}

Finally, we can state the main result of this section. This theorem was mentioned in the introduction as Theorem \ref{intro::thm:classification_of_bipartite_graph_algebras}.

\begin{thm}
    \label{class::thm:classification_of_bipartite_graph_algebras}
    We have 
    \begin{align*}
        C^\ast(G) \cong C^\ast(G^\prime) 
            \quad \Leftrightarrow \quad 
        \mathrm{Spec}_{\leq 2}(C^\ast(G)) \cong \mathrm{Spec}_{\leq 2}(C^\ast(G^\prime)).
    \end{align*}
\end{thm}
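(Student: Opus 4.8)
The plan is to deduce the theorem by combining the machinery built up in this section for the implication ``$\Leftarrow$'' with an elementary spectral argument for ``$\Rightarrow$''. The substance of the statement is the reverse implication, but by now almost all of the work has already been done in the preceding lemmas.

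For ``$\Rightarrow$'', suppose there is a $\ast$-isomorphism $\theta: C^\ast(G) \to C^\ast(G^\prime)$. I would argue that $\pi \mapsto \pi \circ \theta$ is a well-defined bijection between unitary equivalence classes of irreducible representations of $C^\ast(G^\prime)$ and of $C^\ast(G)$, that it is a homeomorphism for the topologies on the respective spectra (this is standard, cf. \cite[Section II.6.5]{blackadar_operator_2006}), and, crucially, that it preserves the dimension of a representation: since $\theta$ is surjective, $\pi \circ \theta$ acts irreducibly on the same Hilbert space as $\pi$. Hence this homeomorphism restricts to one between the subspaces of one- and two-dimensional equivalence classes, yielding $\mathrm{Spec}_{\leq 2}(C^\ast(G)) \cong \mathrm{Spec}_{\leq 2}(C^\ast(G^\prime))$.

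For ``$\Leftarrow$'', I would start from the homeomorphism $\Phi$ fixed throughout this section. Lemma \ref{reps::lemma:edge_bijection_f_from_homeomorphism_of_spectrum} turns it into a bijection $f: E \to E^\prime$ satisfying \eqref{reps::eq:f_preserves_K22_subgraphs} and \eqref{reps::eq:f_preserves_adjacency}. The first step is to observe that the inverse bijection $f^{-1}: E^\prime \to E$ trivially satisfies the very same two properties (both being symmetric biconditionals), so Proposition \ref{class::prop:construct_isomorphism_phi_from_f} applies to both $f$ and $f^{-1}$ and produces $\ast$-homomorphisms $\varphi_f: C^\ast(G) \to C^\ast(G^\prime)$ and $\varphi_{f^{-1}}: C^\ast(G^\prime) \to C^\ast(G)$ given on generators by \eqref{class::eq:phi_on_generators} and \eqref{class::eq:psi_on_generators}. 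The second step is to invoke Lemma \ref{class::lemma:phi_is_invertible}, which gives $\varphi_{f^{-1}} \circ \varphi_f = \mathrm{id}_{C^\ast(G)}$; applying that lemma after interchanging the roles of $G, f$ and $G^\prime, f^{-1}$ gives $\varphi_f \circ \varphi_{f^{-1}} = \mathrm{id}_{C^\ast(G^\prime)}$. Therefore $\varphi_f$ is a $\ast$-isomorphism and $C^\ast(G) \cong C^\ast(G^\prime)$.

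As for the main obstacle: constructing a $\ast$-isomorphism directly from the homeomorphism is the hard part, but this has already been discharged into Lemma \ref{reps::lemma:edge_bijection_f_from_homeomorphism_of_spectrum}, Proposition \ref{class::prop:construct_isomorphism_phi_from_f}, and above all Lemma \ref{class::lemma:phi_is_invertible}. In assembling the theorem itself the only things to check are that $f^{-1}$ inherits \eqref{reps::eq:f_preserves_K22_subgraphs} and \eqref{reps::eq:f_preserves_adjacency}, and, in the forward direction, that the induced spectral homeomorphism respects the dimension grading so that it descends to $\mathrm{Spec}_{\leq 2}$. Neither is a genuine difficulty, so I expect the proof of the theorem to be short.
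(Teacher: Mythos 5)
Your proposal is correct and follows essentially the same route as the paper: the forward direction is dismissed as the standard fact that a $\ast$-isomorphism induces a dimension-preserving homeomorphism of spectra, and the reverse direction assembles Lemma \ref{reps::lemma:edge_bijection_f_from_homeomorphism_of_spectrum}, Proposition \ref{class::prop:construct_isomorphism_phi_from_f} (applied to both $f$ and $f^{-1}$), and Lemma \ref{class::lemma:phi_is_invertible} in exactly the way the paper does.
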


\begin{proof}
    The implication from left to right is clear. So let $\Phi: \mathrm{Spec}_{\leq 2}(C^\ast(G)) \to \mathrm{Spec}_{\leq 2}(C^\ast(G^\prime))$ be a homeomorphism. One obtains an induced bijective map $f: E \to E^\prime$ which satisfies \eqref{reps::eq:f_preserves_K22_subgraphs} and \eqref{reps::eq:f_preserves_adjacency} as discussed in Lemma \ref{reps::lemma:edge_bijection_f_from_homeomorphism_of_spectrum}. The same properties hold for the inverse map $f^{-1}: E^\prime \to E$. By Proposition \ref{class::prop:construct_isomorphism_phi_from_f} we obtain the $\ast$-homomorphisms $\varphi_f$ and $\varphi_{f^{-1}}$ described in \eqref{class::eq:phi_on_generators} and \eqref{class::eq:psi_on_generators}. By Lemma \ref{class::lemma:phi_is_invertible}, $\varphi_{f^{-1}}$ is a left-inverse of $\varphi$, and by replacing $f$ with $f^{-1}$ one gets that $\varphi_{f}$ is a left-inverse of $\varphi_{f^{-1}}$. Consequently, $\varphi_f$ is a $\ast$-isomorphism.
\end{proof}

\printbibliography

\end{document}